\numberwithin{equation}{subsection}
\newcommand{\sqsp}{\renewcommand{\baselinestretch}{1.1}\tiny\normalsize}
\newtheorem{theorem}[subsection]{Theorem}
\newtheorem{lemma}[subsection]{Lemma}
\newtheorem{sublemma}[subsection]{Sublemma}
\newtheorem{proposition}[subsection]{Proposition}
\newtheorem{corollary}[subsection]{Corollary}
\theoremstyle{definition}
\newtheorem{definition}[subsection]{Definition}
\newtheorem{example}[subsection]{Example}
\newtheorem{remark}[subsection]{Remark}
\newcommand{\frakC}{\mathfrak{C}}
\newcommand{\sfO}{\mathsf{O}}
\newcommand{\sfP}{\mathsf{P}}
\newcommand{\sfQ}{\mathsf{Q}}
\newcommand{\Se}{\mathsf{Se}}
\newcommand{\bone}{\mathbf{1}}
\newcommand{\bSe}{\mathbf{Se}}
\newcommand{\catd}{\mathcal{D}}
\newcommand{\cate}{\mathcal{E}}
\newcommand{\catm}{\mathcal{M}}
\newcommand{\catp}{\mathcal{P}}
\newcommand{\catrcf}{\mathcal{RCF}}
\newcommand{\ua}{\underline{a}}
\newcommand{\ub}{\underline{b}}
\newcommand{\uc}{\underline{c}}
\newcommand{\ud}{\underline{d}}
\newcommand{\dc}{\binom{\ud}{\uc}}
\newcommand{\dcprime}{\binom{\ud'}{\uc'}}
\newcommand{\boxv}{\boxtimes_v}
\newcommand{\boxh}{\boxtimes_h}
\newcommand{\ec}{\cate^\frakC}
\newcommand{\sigmaec}{\mathbf{\Sigma}^\frakC_\cate} 
\newcommand{\prop}{\mathbf{PROP}} 
\newcommand{\prope}{\mathbf{PROP}^\frakC_\cate} 
\newcommand{\operade}{\mathbf{Operad}^\frakC_\cate}
\newcommand{\vprope}{\mathbf{vPROP}^\frakC_\cate} 
\newcommand{\hprope}{\mathbf{hPROP}^\frakC_\cate} 
\newcommand{\mon}{\mathbf{Mon}} 
\newcommand{\Ch}{\mathbf{Ch}} 
\newcommand{\alg}{\mathbf{Alg}} 
\DeclareMathOperator{\Hom}{Hom}
\DeclareMathOperator*{\colim}{colim}
\newcommand{\diagramit}[1]{\begin{equation}\SelectTips{cm}{10}\xymatrix{#1}\end{equation}}
\begin{document}

\title{On Homotopy Invariance for Algebras over Colored PROPs}
\author{Mark W. Johnson}
\email{mwj3@psu.edu}
\address{Department of Mathematics\\
         Penn State Altoona\\
         Altoona, PA 16601-3760}

\author{Donald Yau}
\email{dyau@math.ohio-state.edu}
\address{Department of Mathematics\\
    The Ohio State University at Newark\\
    1179 University Drive\\
    Newark, OH 43055}

\keywords{Colored PROP, Quillen model category, homotopy algebra, topological conformal field theory}

\begin{abstract}
Over a monoidal model category, under some mild assumptions, we equip the categories of colored PROPs and their algebras with projective model category structures.  A Boardman-Vogt style homotopy invariance result about algebras over cofibrant colored PROPs is proved.  As an example, we define \emph{homotopy} topological conformal field theories and observe that such structures are homotopy invariant.
\end{abstract}

\subjclass[2000]{18D50, 55U35, 81T45}

\date{\today}
\maketitle

\tableofcontents

\sqsp

\section{Introduction}

A PROP (short for PROduct and Permutation) is a very general algebraic machinery that can encode algebraic structures with multiple inputs and multiple outputs.  It was invented by Mac Lane \cite{maclane63} and was used prominently by Boardman and Vogt \cite{bv} in their seminal paper on homotopy invariant algebraic structures in algebraic topology.
By forgetting structures, every PROP $\sfP$ has an underlying operad $U\sfP$.  Operads are ``smaller" algebraic machines that encode algebraic structures with multiple inputs and one output.
PROPs are also widely used in string topology and mathematical physics.  Indeed, several types of topological field theories, such as topological conformal field theories (TCFT), are described by the so-called Segal PROP $\Se$ \cite{segalA,segalB,segalC}.  This is a topological PROP consisting of moduli spaces of Riemann surfaces with boundary holes.
Closely related is the PROP $\catrcf(g)$ \cite{chataur,cg} in string topology that is built from spaces of reduced metric Sullivan chord diagrams with genus $g$.

There is an important generalization of PROPs, called \emph{colored PROPs}, that can encode even more general types of algebraic structures, including morphisms and more general diagrams of algebras over a (colored) PROP.  The PROP $\catrcf(g)$ mentioned in the previous paragraph is actually a colored PROP.  Also, the Segal PROP $\Se$ has an obvious colored analogue in which the Riemann surfaces are allowed boundary holes with varying circumferences.  Multi-categories (a.k.a.\ colored operads) are to operads what colored PROPs are to PROPs.  In the simplest case, the set of colors $\frakC$ is the one-element set, and in this case we have a $1$-colored PROP.  For example, if $\sfP$ is a $1$-colored PROP, then diagrams of the form $A \to B$, consisting of two $\sfP$-algebras and a $\sfP$-algebra morphism between them, are encoded as algebras over a $2$-colored PROP $\sfP_{\bullet \to \bullet}$ \cite[Example 2.10]{fmy}.

The purpose of this paper is to study homotopy theory of colored PROPs and homotopy invariance properties of algebras over colored PROPs.  To ensure that our results are applicable in a variety of categories, we work over a symmetric monoidal closed category $\cate$ with a compatible model category structure \cite{quillen}.  To ``do homotopy theory" of colored PROPs over $\cate$ means that we lift the model category structure on $\cate$ to the category of colored PROPs.  Since colored PROPs are important mainly because of their algebras, under suitable conditions, we will also lift the model category structure on $\cate$ to the category of $\sfP$-algebras for a colored PROP $\sfP$ over $\cate$.

Several of the results here are generalizations of those in Berger and Moerdijk \cite{bm03,bm07}, in which the main focus are operads and colored operads.  The preprint \cite{harper} by Harper has results that are similar to those in \cite{bm03} with slightly different hypotheses and were obtained by different methods.  An earlier precedent was the work of Hinich \cite{hinich}, who obtained such model category structures in the setting of chain complexes and operads over them.  In \cite[p.373 (6)]{markl06a}, Markl stated the problem of developing a theory of homotopy invariant algebraic structures over PROPs in the setting of chain complexes.  In a recent preprint, Fresse \cite{fresse} independently studied homotopy theory of $1$-colored PROPs and homotopy invariance properties of algebras over $1$-colored PROPs in an arbitrary symmetric monoidal model category.

A description of some of our main results follows.  To study homotopy invariance properties of algebras, first we need a suitable model category structure on the category of colored PROPs.  The following result will be proved in Section \ref{modelstructure}, where the notion of strongly cofibrantly generated is defined in Definition \ref{modeldefs}.

\begin{theorem}[Model Category of Colored PROPs]
\label{thm:coloredpropmodel}
Let $\frakC$ be a non-empty set.  Suppose that $\cate$ is a strongly cofibrantly generated symmetric
monoidal model category with a symmetric monoidal fibrant replacement functor, and either:
\begin{enumerate}
\item
a cofibrant unit and a cocommutative interval, or
\item
functorial path data.
\end{enumerate}
Then the category $\prope$ of $\frakC$-colored PROPs over $\cate$ is a strongly cofibrantly generated model category with fibrations and weak equivalences defined entrywise in $\cate$.
\end{theorem}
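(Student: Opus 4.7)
The strategy is to apply Kan's transfer (lifting) principle along a free-forgetful adjunction $F \dashv U \colon \prope \rightleftarrows \sigmaec$, where the underlying category $\sigmaec$ consists of $\frakC$-colored $\Sigma$-bimodules in $\cate$ (i.e., collections indexed by profile pairs $\profiles$ carrying compatible $\Sigma_{\uc} \times \Sigma_{\ud}$-actions). On $\sigmaec$ there is a projective, strongly cofibrantly generated model structure with fibrations, cofibrations, and weak equivalences defined entrywise in $\cate$; generating sets are obtained by placing generating (trivial) cofibrations of $\cate$ in each entry and tensoring with the appropriate (co)regular representations. We then take $F(I)$ and $F(J)$ as putative generating cofibrations and trivial cofibrations of $\prope$.

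Kan's theorem requires two things: a smallness condition for the small object argument in $\prope$, and that every relative $F(J)$-cell complex is a weak equivalence (i.e.\ entrywise after $U$). The first follows from ``strongly cofibrantly generated'' in $\cate$ together with the fact that $U$ preserves filtered colimits; this in turn reduces to a careful inspection of the free PROP functor $F$, which is a sum over directed graphs (with $\frakC$-colored legs) of tensor products of generators and is built from colimits that commute with filtered colimits in $\sigmaec$. Thus the generating domains $F(I)$, $F(J)$ inherit the smallness needed for the small object argument to run inside $\prope$.

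The main obstacle is the second condition, the acyclicity of $F(J)$-cell complexes. One cannot read off the underlying object of a pushout $\sfP \sqcup_{F(A)} F(B)$ in $\prope$ directly, so we proceed indirectly by a path object argument: it suffices to exhibit, for each fibrant $\sfP \in \prope$, a path object $\sfP \to \sfP^I \twoheadrightarrow \sfP \times \sfP$ whose first map is an entrywise weak equivalence. Given such path objects, a standard Quillen-type argument (as in Schwede-Shipley and Berger-Moerdijk \cite{bm03}) shows that pushouts along maps in $F(J)$ are entrywise weak equivalences, and an induction over cells finishes the job. The two alternative hypotheses of the theorem are tailored precisely to produce these path objects: under (2), functorial path data directly provide them; under (1), the cocommutative interval $H$ yields a cylinder on the unit, and because $H$ is cocommutative the internal hom $[H,-]$ lifts to a functor on $\prope$, so applying it to a symmetric monoidal fibrant replacement of $\sfP$ produces the required path object. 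Cocommutativity of the interval (resp.\ the symmetric monoidality of the fibrant replacement) is exactly what keeps the construction inside $\prope$ rather than only in the category of monoidal-enriched collections.

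With smallness and acyclicity verified, Kan's lifting theorem produces the transferred model structure on $\prope$ with generating sets $F(I)$ and $F(J)$, and fibrations/weak equivalences are by construction entrywise; strong cofibrant generation is inherited from $\cate$ through the filtered-colimit preserving forgetful functor $U$. The one delicate technical point I expect in executing this plan is confirming that the filtration of a pushout $\sfP \sqcup_{F(A)} F(B)$ along a generator of $F(J)$ can be analysed cellularly enough to apply the path object lemma; this is the colored-PROP analogue of the combinatorial graph analyses in \cite{bm07, harper} and is where the bulk of the technical work (deferred to Section \ref{modelstructure}) must take place.
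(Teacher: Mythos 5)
Your proposal follows essentially the same route as the paper: a projective, strongly cofibrantly generated structure on $\sigmaec$ is lifted along the free--forgetful adjunction $(F,U)$, with smallness coming from strong cofibrant generation in $\cate$ plus $U$ preserving filtered colimits, and acyclicity of relative $F(J)$-cell complexes coming from entrywise path objects for fibrant PROPs supplied by the functorial path data (to which the cofibrant-unit-plus-cocommutative-interval hypothesis is reduced, exactly as in Lemma \ref{onetwo}). The only divergence is your closing worry: the paper's Schwede--Shipley style argument (Sublemma \ref{piece}) is purely formal --- a lift into the fibrant replacement plus a right homotopy through the path object --- so no cellular filtration of pushouts $\sfP \sqcup_{F(A)} F(B)$ is ever required.
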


We will define \emph{cocommutative interval} (Definition \ref{def:cocom}) and \emph{functorial path data} (Definition \ref{def:pathobj}) precisely in Section \ref{modelstructure}.  Both of these assumptions are used to construct path objects for fibrant $\frakC$-colored PROPs, which are needed to use the Lifting Lemma \ref{basiclift} to lift the model category structure.  For example, the category of simplicial sets has a cofibrant unit and a cocommutative interval \cite[Section 2]{bm07}.  On the other hand, the categories of chain complexes over a characteristic $0$ ring and of simplicial modules over a commutative ring admit functorial path data \cite[Section 5]{fresse}.  In particular, Theorem \ref{thm:coloredpropmodel} applies to each of these categories.  In fact, there is a modification of this projective structure on $\prope$, having more weak equivalences, which will be shown 
(Proposition \ref{prop:operadQequiv}) 
to be strongly Quillen equivalent to the projective structure on operads considered in \cite{bm07}.  As a consequence, one can view the homotopy theory of $\prope$ in the projective structure as a refinement of the homotopy theory of operads.

Next we describe our homotopy invariance result.  An algebraic structure is considered to be homotopy invariant if it can be transferred back and forth through a weak equivalence, at least under mild assumptions.  Historically, homotopy invariant structures in algebraic topology were first studied in the case of associative topological monoids.  Stasheff \cite{stasheff} found that a homotopy invariant substitute for an associative topological monoid is an $A_\infty$-space.  In \cite{bv} Boardman and Vogt vastly generalized this with the so-called $W$-construction.  Given a topological (colored) operad $\sfO$, the algebras over $W\sfO$ are homotopy invariants, where $W\sfO$ is a cofibrant resolution (i.e., cofibrant replacement) of $\sfO$.  Analogues of the results of Boardman and Vogt in the setting of chain complexes over a characteristic $0$ field were obtained by Markl \cite{markl04}.

One cannot expect that algebras over an arbitrary colored PROP $\sfP$ be homotopy invariants.  As suggested by the Boardman-Vogt $W$-construction, to obtain homotopy invariance of algebras, one should consider \emph{cofibrant} colored PROPs.  In fact, following Markl \cite[Principle 1]{markl04}, one can define \emph{homotopy algebras} as algebras over a cofibrant colored PROP.  The following result, which will be proved in Section \ref{homotopyinvariance}, says that homotopy algebras are homotopy invariants.

\begin{theorem}[Homotopy Invariance of Homotopy Algebras]
\label{thm:hinv}
Let $\frakC$ and $\cate$ be as in Theorem \ref{thm:coloredpropmodel}, and let $\sfP$ be a cofibrant $\frakC$-colored PROP in $\cate$.  Let $f = \{f_c \colon X_c \to Y_c\}_{c \in \frakC}$ be a collection of maps in $\cate$.  Then the following statements hold.
\begin{enumerate}
\item
Suppose that all of the $Y_{\underline{c}}$ are fibrant and that all of the $f_{\underline{c}} \colon X_{\underline{c}} \to Y_{\underline{c}}$ are acyclic cofibrations.  Then any $\sfP$-algebra structure on $X = \{X_c\}$ induces one on $Y = \{Y_c\}$ such that $f$ is a morphism of $\sfP$-algebras.
\item
Suppose that all of the $X_{\underline{c}}$ are cofibrant and that all of the $f_{\underline{c}} \colon X_{\underline{c}} \to Y_{\underline{c}}$ are acyclic fibrations.  Then any $\sfP$-algebra structure on $Y = \{Y_c\}$ induces one on $X = \{X_c\}$ such that $f$ is a morphism of $\sfP$-algebras.
\end{enumerate}
\end{theorem}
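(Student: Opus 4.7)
The plan is to build, from the collection $f$, a ``morphism'' $\frakC$-colored PROP $\mathrm{End}(f)$ equipped with projections to $\mathrm{End}(X)$ and $\mathrm{End}(Y)$, show that the appropriate projection is an acyclic fibration in $\prope$, and then use cofibrancy of $\sfP$ to lift the given algebra structure across that projection. This is the standard endomorphism-PROP strategy, tailored to the colored setting.

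Concretely, let $\mathrm{End}(X)$ and $\mathrm{End}(Y)$ be the endomorphism $\frakC$-colored PROPs with $(\uc;\ud)$-entries $\Hom(X_{\uc}, X_{\ud})$ and $\Hom(Y_{\uc}, Y_{\ud})$, where $X_{\uc} = X_{c_1} \otimes \cdots \otimes X_{c_m}$ and similarly for $Y$. I define $\mathrm{End}(f) \in \prope$ by taking in each profile slot the pullback
\[
\mathrm{End}(f)(\uc;\ud) \;=\; \Hom(X_{\uc}, X_{\ud}) \times_{\Hom(X_{\uc}, Y_{\ud})} \Hom(Y_{\uc}, Y_{\ud}),
\]
where the two legs into $\Hom(X_{\uc}, Y_{\ud})$ are post-composition with $f_{\ud}$ and pre-composition with $f_{\uc}$. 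Since limits in $\prope$ are computed entrywise, $\mathrm{End}(f)$ inherits a canonical $\frakC$-colored PROP structure together with projections $p_X \colon \mathrm{End}(f) \to \mathrm{End}(X)$ and $p_Y \colon \mathrm{End}(f) \to \mathrm{End}(Y)$. By construction, a PROP map $\sfP \to \mathrm{End}(f)$ is exactly a pair of $\sfP$-algebra structures on $X$ and on $Y$ making $f$ a morphism of $\sfP$-algebras, with $p_X$ and $p_Y$ extracting the two structures individually.

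The crux is to check that the relevant projection is an acyclic fibration in $\prope$. By Theorem \ref{thm:coloredpropmodel} this may be tested entrywise in $\cate$. In case (1), $f_{\uc}$ is an acyclic cofibration and $Y_{\ud}$ is fibrant, so the pullback-hom form of the pushout-product axiom forces
\[
\Hom(Y_{\uc}, Y_{\ud}) \longrightarrow \Hom(X_{\uc}, Y_{\ud})
\]
to be an acyclic fibration in $\cate$; since pullbacks of acyclic fibrations are again acyclic fibrations, $p_X$ is an entrywise acyclic fibration. Case (2) is dual: with $X_{\uc}$ cofibrant and $f_{\ud}$ an acyclic fibration, the same axiom gives that $\Hom(X_{\uc}, X_{\ud}) \to \Hom(X_{\uc}, Y_{\ud})$ is an acyclic fibration, so $p_Y$ is too.

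Finally, the transfer is formal. A $\sfP$-algebra structure on $X$ is a map $\varphi_X \colon \sfP \to \mathrm{End}(X)$, and cofibrancy of $\sfP$ in $\prope$ (Theorem \ref{thm:coloredpropmodel}) produces a lift $\widetilde\varphi \colon \sfP \to \mathrm{End}(f)$ of $\varphi_X$ along $p_X$; then $p_Y \circ \widetilde\varphi$ is the induced $\sfP$-algebra structure on $Y$, and the very existence of the lift through $\mathrm{End}(f)$ is the statement that $f$ becomes a morphism of $\sfP$-algebras. Case (2) is symmetric, lifting $\varphi_Y \colon \sfP \to \mathrm{End}(Y)$ through $p_Y$. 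The main obstacle I anticipate is matching the hypotheses in each case precisely to the shape the pullback-hom axiom requires; the tensor-product conditions on $X_{\uc}$, $Y_{\uc}$, $f_{\uc}$, $f_{\ud}$ are already in the statement, so the real content is setting up the pullback so that the two halves of the axiom split cleanly across the two cases.
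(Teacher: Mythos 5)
Your proposal is correct and follows essentially the same route as the paper: your pullback $\mathrm{End}(f)$ is exactly the paper's relative endomorphism construction $E_f$, the entrywise acyclic-fibration check via the pullback-hom form of the pushout-product axiom is precisely the paper's proof of Theorem \ref{thm:hinv}, and the lift against the initial morphism using cofibrancy of $\sfP$ is Proposition \ref{htpyalg}. The one point the paper treats more carefully is that the corner object $E_{X,Y}$ (with entries $Y_{\ud}^{X_{\uc}}$) is only a $\Sigma$-bimodule and not a PROP, so the pullback is formed in $\sigmaec$ rather than in $\prope$, and one must verify directly (Lemma \ref{splitsquare}) that $E_f$ nevertheless carries a PROP structure satisfying the strengthened universal property you assert.
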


In the Theorem above, $X_{\underline{c}} = X_{c_1} \otimes \cdots \otimes X_{c_n}$ if $\uc = (c_1, \ldots , c_n)$ with each $c_i \in \frakC$, and similarly for $f_{\uc}$.  In the hypotheses of the two assertions, $\uc$ runs through all of the finite non-empty sequences of elements in $\frakC$.  Theorem \ref{thm:hinv} may be considered as a colored PROP, model category theoretic version of the Boardman-Vogt philosophy of homotopy invariant structures \cite{bv}.  It should be compared with \cite[(M1)-(M3), (M1')]{markl04} and \cite[Theorem 15 and Corollary 16]{markl02}.  Moreover, under suitable additional assumptions (see Theorem \ref{thm:algebralift}), there is a model category structure on the category of $\sfP$-algebras with entrywise fibrations and weak equivalences.  With this model category structure, we can therefore strengthen the conclusions in Theorem \ref{thm:hinv}: $f \colon X \to Y$ is a weak equivalence of $\sfP$-algebras in the first case and an acyclic fibration of $\sfP$-algebras in the second case.

As an illustration of Theorems \ref{thm:coloredpropmodel} and \ref{thm:hinv}, consider the category $\Ch(R)$ of non-negatively graded chain complexes over a field $R$ of characteristic $0$ with its usual model category structure \cite{ds,hovey,quillen}.  In Section \ref{sec:htcft}, we will recall the details of the Segal PROP $\Se$ mentioned earlier.  An algebra over the chain Segal PROP $\bSe = C_*(\Se)$ is called a \emph{topological conformal field theory} (TCFT) \cite{segalA,segalB,segalC}.  Denote by $\bSe_\infty$ the functorial cofibrant replacement of $\bSe$ given by Quillen's small object argument, and call an algebra over $\bSe_\infty$ a \emph{homotopy topological conformal field theory} (HTCFT).  A TCFT is also an HTCFT via an extended action associated to the acyclic fibration $\bSe_\infty \to \bSe$.

The following result, which will be discussed in Section \ref{sec:htcft}, is a special case of Theorem \ref{thm:hinv}.  Keep in mind that all objects in $\Ch(R)$ are both fibrant and cofibrant, which simplifies the statement.

\begin{corollary}[Homotopy Invariance of HTCFTs]
\label{cor:HTCFT}
Let $f \colon A \to B$ be a map in $\Ch(R)$, where $R$ is a field of characteristic $0$.
\begin{enumerate}
\item
Suppose that $f$ is an injective quasi-isomorphism.  Then any HTCFT structure on $A$ induces one on $B$ such that $f$ becomes a morphism of HTCFTs.
\item
Suppose that $f$ is a surjective quasi-isomorphism.  Then any HTCFT structure on $B$ induces one on $A$ such that $f$ becomes a morphism of HTCFTs.
\end{enumerate}
\end{corollary}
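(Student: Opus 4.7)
The plan is to deduce the corollary directly from Theorem~\ref{thm:hinv} applied with $\frakC = \{\ast\}$ and $\sfP = \bSe_\infty$. First I would verify that $\Ch(R)$ meets the hypotheses of Theorem~\ref{thm:coloredpropmodel}: the standard projective model structure on non-negatively graded chain complexes over $R$ is strongly cofibrantly generated and symmetric monoidal, and in characteristic zero $\Ch(R)$ admits functorial path data (as noted earlier, cf.\ \cite{fresse}). Since $\bSe_\infty$ is by construction a functorial cofibrant replacement of $\bSe$ in the projective model structure on $\prope$, it is a cofibrant $1$-colored PROP, so the cofibrancy hypothesis on $\sfP$ in Theorem~\ref{thm:hinv} is satisfied.

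Next I would translate the hypotheses of the corollary into those of Theorem~\ref{thm:hinv}. Over the field $R$ every module is projective, so every object of $\Ch(R)$ is both fibrant and cofibrant; moreover the cofibrations coincide with the monomorphisms and the fibrations coincide with the epimorphisms. Consequently, an injective quasi-isomorphism is precisely an acyclic cofibration and a surjective quasi-isomorphism is precisely an acyclic fibration, and the ambient cofibrancy and fibrancy conditions on the $X_{\uc}$ and $Y_{\uc}$ appearing in Theorem~\ref{thm:hinv} are automatic.

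The only step requiring a small argument is that each iterated tensor power $f_{\uc} = f^{\otimes n}$ retains the relevant property for every profile $\uc = (\ast,\ldots,\ast)$ of length $n \geq 1$. Tensoring in $\Ch(R)$ preserves monomorphisms and epimorphisms, and because $R$ is a field every chain complex is flat, so by the K\"unneth theorem tensoring with any chain complex preserves quasi-isomorphisms. Hence $f^{\otimes n}$ is again an acyclic cofibration in case~(1) and an acyclic fibration in case~(2). Applying Theorem~\ref{thm:hinv}(1) with $\sfP = \bSe_\infty$ then yields part~(1), and applying Theorem~\ref{thm:hinv}(2) yields part~(2). I do not anticipate a real obstacle here; the content of the corollary is essentially the observation that this topologically-motivated example fits within the hypotheses of the general invariance theorem.
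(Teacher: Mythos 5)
Your proposal is correct and follows essentially the same route as the paper: the corollary is read off as the special case $\sfP=\bSe_\infty$, $\cate=\Ch(R)$ of Theorem \ref{thm:hinv}, using that every object of $\Ch(R)$ is both fibrant and cofibrant and that tensor powers of $f$ remain acyclic (co)fibrations over a field of characteristic $0$ (a point the paper leaves implicit but which you rightly spell out via the K\"unneth theorem). One small imprecision: fibrations in this model structure are only required to be surjective in positive degrees, though the identification of \emph{acyclic} fibrations with surjective quasi-isomorphisms --- which is all your argument uses --- is correct by the five lemma.
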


In other words, HTCFT is a homotopy invariant analogue of TCFT.

Next we consider the category $\alg(\sfP)$ of algebras over a colored PROP $\sfP$.  Let $\ec$ be the product category $\prod_{c \in \frakC} \cate$.  Given a $\frakC$-colored PROP $\sfP$ in $\cate$, each $\sfP$-algebra $X$ has an underlying object in $\ec$.  Let $U \colon \alg(\sfP) \to \ec$ be the forgetful functor.

\begin{theorem}[Model Category of Algebras]
\label{thm:algebralift}
Let $\frakC$ and $\cate$ be as in Theorem \ref{thm:coloredpropmodel}, and let $\sfP$ be a cofibrant $\frakC$-colored PROP in $\cate$.  Suppose in addition that
\begin{enumerate}
\item
finite tensor products of fibrations with fibrant targets remain fibrations in $\cate$, and
\item
the forgetful functor $U$ admits a left adjoint.
\end{enumerate}
Then there is a lifted model category structure on $\alg(\sfP)$ with fibrations and weak equivalences defined entrywise in $\ec$ .
\end{theorem}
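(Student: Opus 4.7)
The plan is to apply the Lifting Lemma \ref{basiclift} to the adjunction $F \dashv U \colon \ec \rightleftarrows \alg(\sfP)$ supplied by hypothesis (2), where $\ec$ is given the product model structure (strongly cofibrantly generated from the generators of $\cate$). Weak equivalences and fibrations in $\alg(\sfP)$ are declared to be detected by $U$, and the generating (acyclic) cofibrations are defined to be the images under $F$ of those of $\ec$. The Lifting Lemma then reduces the proof to two verifications: a smallness condition so that the small object argument applies in $\alg(\sfP)$, which is routine given the strong cofibrant generation of $\cate$ and the adjunction; and the existence of a functorial path object in $\alg(\sfP)$ for every fibrant $\sfP$-algebra.

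The main obstacle is the path object construction. Let $Y$ be a fibrant $\sfP$-algebra, so that each $Y_c$ is fibrant in $\cate$. Using the cocommutative interval or the functorial path data supplied by the hypotheses of Theorem \ref{thm:coloredpropmodel}, I would first construct a path object $H$ for $Y$ in $\ec$, that is, a factorization of the diagonal $Y \to H \to Y \times Y$ as an entrywise acyclic cofibration followed by an entrywise fibration, with each $H_c$ fibrant. Hypothesis (1) enters crucially here to ensure that every iterated tensor power $H_{\uc} = H_{c_1}\otimes\cdots\otimes H_{c_n}$ is again fibrant; combined with the pushout--product axiom for $\cate$, this guarantees that $Y_{\uc} \to H_{\uc}$ remains an acyclic cofibration into a fibrant target, so that the $\sfP$-action to be placed on $H$ is homotopically well behaved.

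The $\sfP$-algebra structure on $H$ is then obtained by invoking Theorem \ref{thm:hinv}(1) for the entrywise acyclic cofibration $Y \to H$ between fibrant collections: cofibrancy of $\sfP$ transports the structure from $Y$ to $H$ in such a way that $Y \to H$ becomes a morphism of $\sfP$-algebras. The projection $H \to Y \times Y$ becomes a morphism of $\sfP$-algebras by the naturality of the path construction in the two endpoints of the interval (respectively, by the design of the functorial path data), using that $Y \times Y$ inherits its canonical $\sfP$-algebra structure from $U$ preserving products. With functorial path objects in $\alg(\sfP)$ secured, the Lifting Lemma delivers the model structure with fibrations and weak equivalences entrywise in $\ec$ by construction; the two alternatives of the interval/path data hypothesis can be handled in parallel by routine bookkeeping once the scheme is laid out.
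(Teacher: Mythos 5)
Your overall strategy (apply the Lifting Lemma \ref{basiclift} to the free--forgetful adjunction and reduce to producing path objects for fibrant $\sfP$-algebras) is the same as the paper's, but the crucial step --- putting a $\sfP$-algebra structure on the path object $H$ so that \emph{both} $Y \to H$ \emph{and} $H \to Y \times Y$ become morphisms of $\sfP$-algebras --- is where your argument breaks down. Theorem \ref{thm:hinv}(1) only produces \emph{some} $\sfP$-algebra structure on $H$ making $Y \to H$ a morphism: concretely, it is obtained by lifting $\sfP \to E_Y$ through the acyclic fibration $\overline{f}^* \colon E_f \to E_Y$, and this abstract lift carries no compatibility whatsoever with the projection $h \colon H \to Y\times Y$. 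Your appeal to ``naturality of the path construction in the two endpoints of the interval'' would require the structure on $H$ to be the one induced functorially from $H = Y^J$ (or from the functorial path data), but no such induced structure exists for a general PROP: the lax monoidal structure $Y_{c_1}^J \otimes Y_{c_2}^J \to (Y_{c_1}\otimes Y_{c_2})^J$ points the wrong way on the \emph{output} side, so operations with more than one output cannot be transported to $Y^J$ this way. (This is exactly the point at which PROPs diverge from operads, and it is why hypothesis (1) about tensor products of fibrations appears at all.)

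The paper fills this gap with Lemma \ref{factorcompat}: given the factorization $A \xrightarrow{f} B \xrightarrow{h} C$ with $C$ entrywise fibrant, one forms the pullback $E_1 = E_f \times_{E_B} E_h$ in $\prope$ and shows that the induced map $E_1 \to E_g$ (where $g = hf$) is an entrywise acyclic fibration. The heart of that verification is that the canonical map from $E_B$ to the pullback $E_2 = E_{A,B} \times_{E_{A,C}} E_{B,C}$ has entries the pushout--product maps of $f_{\uc}$ (an acyclic cofibration) against $h_{\ud}$; hypothesis (1) is what guarantees $h_{\ud} = h_{d_1}\otimes\cdots\otimes h_{d_m}$ is a fibration, so the pushout--product axiom makes $E_B \to E_2$ an acyclic fibration, and a chain of base-change arguments propagates this to $E_1 \to E_g$. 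Cofibrancy of $\sfP$ then lifts the structure map $\sfP \to E_g$ of the diagonal to $E_1$, which by Lemma \ref{splitsquare} is precisely a $\sfP$-algebra structure on $B$ compatible with both $f$ and $h$ simultaneously. Without this (or an equivalent simultaneous-lifting device) your path object is not a path object in $\alg(\sfP)$. A secondary omission: the Lifting Lemma also requires a functorial $U$-fibrant replacement in $\alg(\sfP)$, which your proposal does not address; the paper obtains it by applying the symmetric monoidal fibrant replacement functor entrywise and restricting along $\sfP \to \tilde{\sfP}$.
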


As B. Fresse explained to the authors, for a general (colored) PROP $\sfP$, the forgetful functor $U$ may not admit a left adjoint, which is why we need this assumption.  On the other hand, if the monoidal category $\cate$ is Cartesian (i.e., the monoidal product is the Cartesian product), then $U$ always admits a left adjoint.   For example, any category of presheaves of sets is Cartesian, which includes simplicial sets.  The assumption about finite tensor products preserving fibrations is used in a technical result (Lemma \ref{factorcompat}).  Many symmetric monoidal model categories $\cate$ have this property, including chain complexes over a characteristic $0$ ring and all Cartesian categories.

Below we describe the organization of the rest of this paper.

In Section \ref{sec:coloredprop}, we define colored PROPs using two monoidal products $\boxv$ and $\boxh$.  Colored PROPs can be defined as $\boxv$-monoidal $\boxh$-monoids.  In Section \ref{modelstructure}, we lift the model category structure from the base category $\cate$ to the category of colored PROPs over $\cate$  (Theorem \ref{props'}), which is Theorem \ref{thm:coloredpropmodel} above.  A key ingredient is a Lifting Lemma \ref{basiclift}.  Different forms of this Lifting Lemma have been employed by various authors, e.g., \cite{hinich,ss}, to lift model category structures.  In Section \ref{homotopyinvariance}, we consider homotopy invariant versions of algebras over a colored PROP $\sfP$ and prove Theorem \ref{thm:hinv} above.  Section \ref{sec:htcft} provides an illustration of the Homotopy Invariance Theorem \ref{thm:hinv} and of the necessity of the (colored) PROP setting.  We discuss the Segal PROP $\Se$, its chain level algebras (TCFTs), and observe the validity of Corollary \ref{cor:HTCFT}.  In Section \ref{sec:admissible}, we consider the category of algebras over a colored PROP $\sfP$ and prove Theorem \ref{thm:algebralift} above.  In Section \ref{sec:Qpairs}, we consider Quillen pairs on the categories of colored $\Sigma$-bimodules, colored PROPs, and algebras induced by various adjoint pairs.

In the last two sections, we present some further results about colored PROPs for future references.  In Section \ref{app:coloredPROPs}, it is shown that colored PROPs can also be regarded as $\boxh$-monoidal $\boxv$-monoids.  In Section \ref{app:operads}, we discuss the free-forgetful (Quillen) adjunction between colored operads and colored PROPs.  Using this adjunction we observe that, for a colored operad $\sfO$, the categories of $\sfO$-algebras and $\sfO_{prop}$-algebras are equivalent, where $\sfO_{prop}$ is the colored PROP generated by $\sfO$.

\subsection*{Acknowledgement}
The authors thank Benoit Fresse for his helpful comments about earlier drafts of this paper and the anonymous referee for his/her helpful suggestions.  Author Johnson was partially supported by U.S.-Israel Binational Science Foundation grant 2006039.

\section{Colored PROPs}
\label{sec:coloredprop}

Our setting throughout this paper assumes $\mathcal{E} = (\cate, \otimes, I)$ is an underlying symmetric monoidal model category that is strongly cofibrantly generated with a zero object $0$.  The reader is referred to \cite{ss} for the definition of a monoidal model category (which subsumes the closed symmetric monoidal condition) and the pushout-product axiom.  It is assumed that $\cate$ in particular has all small colimits and limits.  We will say more about strong cofibrant generation in the next section.

To facilitate our discussion of model category structure on colored PROPs, in this section we give a formal definition of colored PROPs in $\cate$.  We work with colored PROPs without units.  As in the classical case of operads, we will build our colored PROPs from a form of $\Sigma$-objects.  Analogous to the description of operads as monoids with respect to the circle product, one can think of colored PROPs as \emph{monoidal monoids} (Proposition \ref{coloredprop} and Definition \ref{def:prop}).  This description of colored PROPs involves \emph{two} monoidal products $\boxv$ \eqref{boxvdc} and $\boxh$ \eqref{boxhsigmaobject}, the first one for the vertical composition and the other for the horizontal composition.  Thus, colored PROPs are monoids in a category of monoids.

We begin with a discussion of colors and colored $\Sigma$-bimodules.

\subsection{Colored $\Sigma$-bimodules}
\label{subsec:sigmabimodules}

Let $\frakC$ be a non-empty set, whose elements are called \textbf{colors}.  Our PROPs have a base set of colors $\frakC$.  The simplest case is when $\frakC = \{*\}$, which gives $1$-colored PROPs.

Let $\mathcal{P}(\frakC)$ denote the category whose objects, called \textbf{profiles} or \textbf{$\frakC$-profiles}, are finite non-empty sequences of colors.  If $\ud = (d_1, \ldots , d_m) \in \catp(\frakC)$, then we write $|\ud| = m$.  Our convention is to use a normal alphabet, possibly with a subscript (e.g., $d_1$) to denote a color and to use an underlined alphabet (e.g., $\ud$) to denote an object in $\catp(\frakC)$.

Permutations $\sigma \in \Sigma_{|\ud|}$ act on such a profile $\ud$ from the left by permuting the $|\ud|$ colors.  Given two profiles $\uc = (c_1, \ldots , c_n)$ and $\ud = (d_1, \ldots , d_m)$, a \textbf{morphism} in $\mathcal{P}(\frakC)$, $\uc \to \ud$ is a permutation $\sigma$ such that $\sigma(\uc) = \ud$.  Such a morphism exists if and only if $\ud$ is in the orbit of $\uc$.  Of course, if such a morphism exists, then $|\uc| = |\ud|$.  Clearly, this defines a groupoid and the \textbf{orbit type} of a $\frakC$-profile $\uc$, or equivalently the set of objects of the same connected component of the groupoid,  is denoted by $[\uc]$.

To emphasize that the permutations act on the profiles from the left, we will also write $\catp(\frakC)$ as $\catp_l(\frakC)$.  If we let the permutations act on the profiles from the right instead, then we get an equivalent category $\catp_r(\frakC)=\catp_l(\frakC)^{op}$.

Given profiles as above, we define the concatenation of $\uc$ and $\ud$,
\begin{equation}
\label{eq:concat}
(\uc, \ud) = (c_1, \ldots , c_n, d_1, \ldots , d_m) \in \catp(\frakC).
\end{equation}

The category of \textbf{$\frakC$-colored $\Sigma$-bimodules} over $\cate$ is defined to be the diagram category $\cate^{\catp_l(\frakC) \times \catp_r(\frakC)}$.  In other words, a $\frakC$-colored $\Sigma$-bimodule is a functor $\sfP \colon \catp_l(\frakC) \times \catp_r(\frakC) \to \cate$, and a morphism of $\frakC$-colored $\Sigma$-bimodules is a natural transformation of such functors. Unpacking this definition, a $\frakC$-colored $\Sigma$-bimodule $\sfP$ consists of the following data:

\begin{enumerate}
\item
For any profiles $\ud \in \catp_l(\frakC)$ and $\uc \in \catp_r(\frakC)$, it has an object
\[
\sfP\binom{\underline{d}}{\underline{c}} = \sfP\binom{d_1, \ldots , d_m}{c_1 , \ldots , c_n} \in \cate.
\]
This object should be thought of as the space of operations with $|\uc| = n$ inputs and $|\ud| = m$ outputs.   The $n$ inputs have colors $c_1, \ldots , c_n$, and the $m$ outputs have colors $d_1, \ldots , d_m$.

\item
Given permutations $\sigma \in \Sigma_{|\ud|}$ and $\tau \in \Sigma_{|\uc|}$, there is a morphism
\begin{equation}
\label{eq:sigmatauaction}
(\sigma; \tau) \colon \sfP\binom{\ud}{\uc} \to \sfP\binom{\sigma\ud}{\uc \tau}
\end{equation}
in $\cate$ such that:
\begin{enumerate}
\item $(1;1)$ is the identity morphism,
\item $(\sigma' \sigma; \tau\tau') = (\sigma';\tau') \circ (\sigma;\tau)$, and
\item $(1;\tau) \circ (\sigma; 1) = (\sigma; \tau) = (\sigma; 1) \circ (1; \tau)$.
\end{enumerate}
\end{enumerate}

Assembling these morphisms, there are commuting left $\Sigma_m$ action (acting on the $d_j$) and right $\Sigma_n$ action (acting on the $c_i$) on the object
\begin{equation}
\label{eq:Edecomp}
\sfP(m,n)
= \colim \sfP\binom{d_1, \ldots , d_m}{c_1 , \ldots , c_n}
= \colim \sfP\binom{\underline{d}}{\underline{c}},
\end{equation}
where the colimit is taken over all $\ud$ and $\uc$ with $|\ud| = m$ and $|\uc| = n$.  The object $\sfP(m,n)$ is said to have \textbf{biarity $(m,n)$}, and $\sfP\binom{\underline{d}}{\underline{c}}$ is called a \textbf{component} of $\sfP(m,n)$.  If there is only one color (i.e., $\frakC = \{*\}$), then $\sfP(m,n)$ has only one component, which has a left $\Sigma_m$ action and a right $\Sigma_n$ action that commute with each other.  We will sometimes abuse notation and refer to the left or right action of $\sigma$ on
$\sfP\binom{\underline{d}}{\underline{c}}$ to describe the structure maps from it to
$\sfP\binom{\sigma\underline{d}}{\underline{c}}$ or $\sfP\binom{\underline{d}}{\underline{c}\sigma}$.

Likewise, one observes that a morphism $f \colon \sfP \to \sfQ$ of $\frakC$-colored $\Sigma$-bimodules consists of color-preserving morphisms
\[
\left\{\sfP\binom{\underline{d}}{\underline{c}} \xrightarrow{f} \sfQ\binom{\underline{d}}{\underline{c}} \colon (\ud; \uc) \in \catp_l(\frakC) \times \catp_r(\frakC) \right\}
\]
that respect the $\Sigma_m$-$\Sigma_n$ action.  In other words, each square
\[
\SelectTips{cm}{10}
\xymatrix{
\sfP\dbinom{\ud}{\uc} \ar[r]^-f  \ar[d]_-{(\sigma;\tau)} & \sfQ\dbinom{\ud}{\uc} \ar[d]^-{(\sigma;\tau)} \\
\sfP\dbinom{\sigma\ud}{\uc\tau}\ar[r]^-f & \sfQ\dbinom{\sigma\ud}{\uc\tau}
}
\]
is commutative.

To simplify the notation, from now on the category of $\frakC$-colored $\Sigma$-bimodules over $\cate$ is denoted by $\sigmaec$ rather than $\cate^{\catp_l(\frakC) \times \catp_r(\frakC)}$.

\subsection{Colored PROPs as monoidal monoids}

Recall that an operad can be equivalently defined as a monoid in the category of $\Sigma$-objects \cite[Lemma 9]{may97}.  We now describe a colored PROP analogue of this conceptual description of operads as monoids.  Since a colored PROP has both a vertical composition and a compatible horizontal composition, it makes sense that a colored PROP is described by \emph{two} monoidal products instead of just one.  First we build the vertical composition; then we build the horizontal composition on top of it.

To build the vertical composition, first we want to decompose $\sigmaec$ into smaller pieces with each piece corresponding to a pair of orbit types of colors.  To do this we need to use smaller indexing categories than all of the $\frakC$-profiles.

\begin{definition}
\label{sigmab}
Let $\ub = (b_1, \ldots , b_k)$ be a $\frakC$-profile.  Define the category $\Sigma_{\ub}$ to be the maximal connected sub-groupoid of $\catp_l(\frakC)$ containing $\ub$.

To introduce notation, this
is the category whose objects are the $\frakC$-profiles $\tau\ub = \left(b_{\tau(1)}, \ldots , b_{\tau(k)}\right) \in \catp(\frakC)$ obtained from $\ub$ by permutations $\tau \in \Sigma_k$.  Given two (possibly equal) objects $\tau \ub$ and $\tau'\ub$ in $\Sigma_{\ub}$, a morphism $\tau'' \colon \tau \ub \to \tau' \ub$ is a permutation in
$\Sigma_k$ such that $\tau''\tau \ub = \tau' \ub$ as $\frakC$-profiles.
\end{definition}

Notice that when we write $\tau\ub$ as an object in $\Sigma_{\ub}$, the permutation $\tau$ is not necessarily unique when $\ub$ contains repeated colors.  Indeed, $\tau'\ub$ is the same object as
$\tau\ub$ if and only if they are equal as ordered sequences of colors.  It is easy to see that there is an isomorphism $\Sigma_{\ub} \cong \Sigma_{\tau\ub}$ of groupoids for any $\tau \in \Sigma_{|\ub|}$.

\begin{example}
In the one-colored case, i.e., $\frakC = \{*\}$, a $\frakC$-profile $\ub$ is uniquely determined by its length $|\ub| = k$.  In this case, there is precisely one object $\ub = (*, \ldots , *)$ ($k$ entries) in the category $\Sigma_{\ub}$, since $\ub$ is unchanged by any permutation in $\Sigma_k$.
In other words, in the one-colored case, $\Sigma_{\ub}$ is the permutation group $\Sigma_{|\ub|}$, regarded as a category with one object, whose classifying space is $B\Sigma_{|\ub|}$.  \qed
\end{example}

\begin{example}
In the other extreme, suppose that $\ub = \left(b_1, \ldots , b_k\right)$ consists of distinct colors, i.e., $b_i \not= b_j$ if $i \not= j$.  There are now $k!$ different permutations of $\ub$, one for each $\tau \in \Sigma_k$.  So there are $k!$ objects in $\Sigma_{\ub}$.  Given two objects $\tau\ub$ and $\tau'\ub$ in $\Sigma_{\ub}$,
there is a unique morphism $\tau'\tau^{-1} \colon \tau\ub \to \tau'\ub$.  Thus, the classifying space
of $\Sigma_{\ub}$ in this case is a model for $E\Sigma_{|\ub|}$ as in \cite[5.9]{dwyer}.
\qed
\end{example}

To decompose $\frakC$-colored $\Sigma$-bimodules, we actually need a pair of $\frakC$-profiles at a time.  So we introduce the following groupoid.

\begin{definition}
Given any pair of $\frakC$-profiles $\ud$ and $\uc$, define $\Sigma_{\ud; \uc} = \Sigma_{\ud} \times \Sigma_{\uc}^{op}$, where $\Sigma_{\ud}$ and $\Sigma_{\uc}$ are introduced in Definition \ref{sigmab}.
\end{definition}

Of course, one could equivalently describe $\Sigma_{\ud; \uc}$ as the maximal connected sub-groupoid of $\catp_l(\frakC) \times \catp_r(\frakC)$ containing the ordered pair $(\ud;\uc)$.  If $\ud = (d_1, \ldots , d_m)$ and $\uc = (c_1, \ldots , c_n)$, then we write the objects in $\Sigma_{\ud; \uc}$ as pairs
\[
\binom{\sigma\ud}{\uc\tau} = \binom{d_{\sigma(1)}, \ldots , d_{\sigma(m)}}{c_{\tau^{-1}(1)}, \ldots , c_{\tau^{-1}(n)}}
\]
for $\sigma \in \Sigma_m$ and $\tau \in \Sigma_n$.

Given any $\frakC$-profile $\ud$, we denote by $[\ud]$ the orbit type of $\ud$ under permutations in $\Sigma_{|\ud|}$.  The following result is the decomposition of colored $\Sigma$-bimodules that we have been referring to.

\begin{lemma}
\label{decomposition}
There is a canonical isomorphism
\begin{equation}
\label{eq:decomp}
\sigmaec \cong \prod_{[\ud], [\uc]} \cate^{\Sigma_{\ud; \uc}},
\quad \sfP \mapsto \left\{\sfP\binom{[\ud]}{[\uc]}\right\}
\end{equation}
of categories, in which the product runs over all the pairs of orbit types of $\frakC$-profiles.
\end{lemma}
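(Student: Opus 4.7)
The plan is to identify the indexing groupoid $\catp_l(\frakC)\times\catp_r(\frakC)$ with a disjoint union of its connected components and then invoke the fact that a functor out of a coproduct of categories is the same as a family of functors out of each summand.

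First, I would analyze the connected components of $\catp_l(\frakC)$. By definition there is a morphism $\ud\to\ud'$ in $\catp_l(\frakC)$ precisely when $\ud'$ is a permutation of $\ud$, i.e.\ when $[\ud]=[\ud']$, so the set of connected components is exactly the set of orbit types. Choosing one representative $\ud$ from each orbit type $[\ud]$, the full subcategory on the objects of that component is, by Definition \ref{sigmab}, equal to $\Sigma_{\ud}$. Hence
\[
\catp_l(\frakC)\;\cong\;\coprod_{[\ud]}\Sigma_{\ud}
\]
as groupoids, and dually $\catp_r(\frakC)=\catp_l(\frakC)^{op}\cong \coprod_{[\uc]}\Sigma_{\uc}^{op}$.

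Next I would take the product of these two decompositions. Since the product of categories distributes over disjoint unions, one gets
\[
\catp_l(\frakC)\times\catp_r(\frakC)\;\cong\;\coprod_{[\ud],[\uc]}\bigl(\Sigma_{\ud}\times\Sigma_{\uc}^{op}\bigr)\;=\;\coprod_{[\ud],[\uc]}\Sigma_{\ud;\uc},
\]
using the definition $\Sigma_{\ud;\uc}=\Sigma_{\ud}\times\Sigma_{\uc}^{op}$. The universal property of a coproduct of small categories then yields the canonical equivalence $\cate^{\coprod_i \catc_i}\cong \prod_i \cate^{\catc_i}$; applying this with $\catc_{[\ud],[\uc]}=\Sigma_{\ud;\uc}$ gives exactly the asserted isomorphism, with the functor sending $\sfP$ to the family of its restrictions $\sfP\binom{[\ud]}{[\uc]}$.

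There is no real obstacle here beyond bookkeeping: the only subtle point is that the summand $\Sigma_{\ud;\uc}$ depends on the chosen representative $(\ud;\uc)$ of the pair of orbit types, but any other choice $(\tau\ud;\uc\sigma)$ yields a canonically isomorphic groupoid, so the indexed product on the right-hand side is well-defined up to canonical isomorphism. Morphisms of $\Sigma$-bimodules automatically decompose under this identification because natural transformations between functors on a coproduct are determined componentwise, which shows the equivalence is an isomorphism of categories rather than just of object sets.
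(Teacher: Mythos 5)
Your proposal is correct and is essentially the paper's own argument: decompose the indexing groupoid $\catp_l(\frakC)\times\catp_r(\frakC)$ into its connected components, which are exactly the groupoids $\Sigma_{\ud;\uc}$, and then use the canonical isomorphism between functors out of a coproduct of categories and the product of the functor categories. The only cosmetic difference is that you decompose each factor separately and distribute, while the paper decomposes the product groupoid directly; these agree since $\Sigma_{\ud;\uc}=\Sigma_{\ud}\times\Sigma_{\uc}^{op}$ is precisely the connected component of $(\ud;\uc)$.
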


\begin{proof}
First recall that a (small) groupoid is canonically isomorphic to the coproduct of its connected components $\catp_l(\frakC) \times \catp_r(\frakC) \approx \coprod \Sigma_{\ud; \uc}$.
Now notice that the universal properties imply the category of functors
$Fun(\coprod \Sigma_{\ud; \uc},\cate)$ is canonically isomorphic to the product category
$\prod Fun(\Sigma_{\ud; \uc},\cate)$.
\end{proof}

\begin{example}
If $\frakC = \{*\}$, then the decomposition \eqref{eq:decomp} becomes
\[
\sigmaec \cong \prod_{m, n \geq 1} \cate^{\Sigma_m \times \Sigma_n^{op}}.
\]
An object in the diagram category $\cate^{\Sigma_m \times \Sigma_n^{op}}$ is simply an object $\sfP(m,n)$ in $\cate$ with a left $\Sigma_m$-action and a right $\Sigma_n$-action that commute with each other.\qed
\end{example}

An important ingredient in building the vertical composition in a colored PROP is a functor
\[
\otimes_{\Sigma_{\ub}} \colon \cate^{\Sigma_{\ud;\ub}} \times \cate^{\Sigma_{\ub;\uc}} \to \cate^{\Sigma_{\ud;\uc}},
\]
which in the one-colored case reduces to the usual $\otimes_{\Sigma_k}$.  So let $X \in \cate^{\Sigma_{\ud;\ub}}$ and $Y \in \cate^{\Sigma_{\ub;\uc}}$.  First we specify what $X \otimes_{\Sigma_{\ub}} Y$ does to objects in $\Sigma_{\ud;\uc}$.

Fix an object $\binom{\sigma\ud}{\uc\mu} \in \Sigma_{\ud;\uc}$.  Consider the diagram $D = D(X,Y; \sigma\ud, \uc\mu) \colon \Sigma_{\ub} \to \cate$ defined as
\begin{equation}
\label{Dobj}
D(\tau\ub) = X\binom{\sigma\ud}{\ub\tau^{-1}} \otimes Y\binom{\tau\ub}{\uc\mu}
\end{equation}
for each object $\tau\ub \in \Sigma_{\ub}$.  (Note that $\ub\tau^{-1} = \tau\ub$ as $\frakC$-profiles.)  The image of a morphism $\tau'' \in \Sigma_{\ub}(\tau\ub, \tau'\ub)$ under $D$ is the map
\begin{equation}
\label{tau''}
X\binom{\sigma\ud}{\ub\tau^{-1}} \otimes Y\binom{\tau\ub}{\uc\mu} \xrightarrow{\binom{1}{\tau''^{-1}} \otimes \binom{\tau''}{1}}
X\binom{\sigma\ud}{\ub\tau'^{-1}} \otimes Y\binom{\tau'\ub}{\uc\mu}.
\end{equation}
Now we define the object
\begin{equation}
\label{XtensoroverbY}
\left(X \otimes_{\Sigma_{\ub}} Y\right)\binom{\sigma\ud}{\uc\mu}
= \colim D(X,Y; \sigma\ud, \uc\mu) \in \cate,
\end{equation}
the colimit of the diagram $D = D(X,Y; \sigma\ud, \uc\mu)$.

Next we define the image under $X \otimes_{\Sigma_{\ub}} Y$ of a morphism in $\Sigma_{\ud;\uc}$.  So consider the morphism
\[
(\sigma''; \mu'') \in \Sigma_{\ud;\uc}\left(\binom{\sigma\ud}{\uc\mu}, \binom{\sigma'\ud}{\uc\mu'}\right).
\]
For each morphism $\tau'' \in \Sigma_{\ub}(\tau\ub, \tau'\ub)$, the square
\[
\SelectTips{cm}{10}
\xymatrix{
X\dbinom{\sigma\ud}{\ub\tau^{-1}} \otimes Y\dbinom{\tau\ub}{\uc\mu} \ar[rr]^-{\binom{\sigma''}{1} \otimes \binom{1}{\mu''}} \ar[d]_-{\binom{1}{\tau''^{-1}} \otimes \binom{\tau''}{1}} & & X\dbinom{\sigma'\ud}{\ub\tau^{-1}} \otimes Y\dbinom{\tau\ub}{\uc\mu'} \ar[d]^-{\binom{1}{\tau''^{-1}} \otimes \binom{\tau''}{1}} \\
X\dbinom{\sigma\ud}{\ub\tau'^{-1}} \otimes Y\dbinom{\tau'\ub}{\uc\mu} \ar[rr]^-{\binom{\sigma''}{1} \otimes \binom{1}{\mu''}} & & X\dbinom{\sigma'\ud}{\ub\tau'^{-1}} \otimes Y\dbinom{\tau'\ub}{\uc\mu'}
}
\]
is commutative because both composites are equal to $\binom{\sigma''}{\tau''^{-1}} \otimes \binom{\tau''}{\mu''}$.  These commutative squares give us a map
\[
\binom{\sigma''}{1} \otimes \binom{1}{\mu''} \colon D(X,Y; \sigma\ud, \uc\mu) \to D(X,Y; \sigma'\ud, \uc\mu')
\]
in the diagram category $\cate^{\Sigma_{\ub}}$.  Taking colimits we obtain a map
\begin{equation}
\label{XtensoroverbYmap}
\left(X \otimes_{\Sigma_{\ub}} Y\right)(\sigma''; \mu'') \colon \left(X \otimes_{\Sigma_{\ub}} Y\right)\binom{\sigma\ud}{\uc\mu} \to \left(X \otimes_{\Sigma_{\ub}} Y\right)\binom{\sigma'\ud}{\uc\mu'} \in \cate.
\end{equation}

The naturality of the constructions above is clear.  So we have the following result.

\begin{lemma}
\label{tensoroverb}
There is a functor
\begin{equation}
\label{eq:tensoroverb}
\otimes_{\Sigma_{\ub}} \colon \cate^{\Sigma_{\ud;\ub}} \times \cate^{\Sigma_{\ub;\uc}} \to \cate^{\Sigma_{\ud;\uc}}, \quad (X, Y) \mapsto \left(X \otimes_{\Sigma_{\ub}} Y\right)
\end{equation}
which restricts to the usual $\otimes_{\Sigma_{|\ub|}}$ in the $1$-colored case.
\end{lemma}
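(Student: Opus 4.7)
The plan is to verify the two claims separately: first that the construction $\otimes_{\Sigma_{\ub}}$ of \eqref{XtensoroverbY} and \eqref{XtensoroverbYmap} really does assemble into a bifunctor, and second that it reduces to the classical coequalizer-style tensor product over $\Sigma_{|\ub|}$ when $\frakC$ is a singleton.

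For the functoriality claim, I would first fix $(X,Y) \in \cate^{\Sigma_{\ud;\ub}} \times \cate^{\Sigma_{\ub;\uc}}$ and check that the assignment $\binom{\sigma\ud}{\uc\mu} \mapsto (X \otimes_{\Sigma_{\ub}} Y)\binom{\sigma\ud}{\uc\mu}$ on objects, together with the map $(X \otimes_{\Sigma_{\ub}} Y)(\sigma''; \mu'')$ of \eqref{XtensoroverbYmap} on morphisms, defines a functor $\Sigma_{\ud;\uc} \to \cate$. The identity axiom is immediate from the fact that $\binom{1}{1} \otimes \binom{1}{1}$ is the identity on each $D(\tau\ub)$ and colimits of identity morphisms of diagrams are identities. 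For composition, given $(\sigma''; \mu'')$ and $(\sigma'''; \mu''')$, one observes at the level of the diagrams $D$ that the square giving $\bigl(\binom{\sigma'''}{1}\otimes\binom{1}{\mu'''}\bigr)\circ\bigl(\binom{\sigma''}{1}\otimes\binom{1}{\mu''}\bigr)$ componentwise equals $\binom{\sigma'''\sigma''}{1}\otimes\binom{1}{\mu''\mu'''}$, using the axioms (a)-(c) from the definition of $\Sigma$-bimodules. Passing to the colimit gives the desired composition law in $\cate^{\Sigma_{\ud;\uc}}$.

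Next I would address functoriality in $(X,Y)$. Given morphisms $f \colon X \to X'$ in $\cate^{\Sigma_{\ud;\ub}}$ and $g \colon Y \to Y'$ in $\cate^{\Sigma_{\ub;\uc}}$, the tensor product $f \otimes g$ induces a morphism of diagrams $D(X,Y;\sigma\ud, \uc\mu) \to D(X',Y';\sigma\ud, \uc\mu)$ because $f$ and $g$ are natural in the $\Sigma_{\ub}$-action (this naturality is precisely the content of being morphisms in $\cate^{\Sigma_{\ud;\ub}}$ and $\cate^{\Sigma_{\ub;\uc}}$). Taking colimits yields the required morphism $X \otimes_{\Sigma_{\ub}} Y \to X' \otimes_{\Sigma_{\ub}} Y'$ in $\cate^{\Sigma_{\ud;\uc}}$, and this assignment visibly preserves identities and compositions by the universal property of colimits.

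For the second claim, in the $1$-colored case with $|\ub|=k$, the groupoid $\Sigma_{\ub}$ has a single object and automorphism group $\Sigma_k$. The diagram $D$ reduces to the single object $X(m,k)\otimes Y(k,n)$ with $\Sigma_k$ acting via $\tau \mapsto \binom{1}{\tau^{-1}}\otimes\binom{\tau}{1}$, so its colimit is exactly the usual coinvariants $X(m,k)\otimes_{\Sigma_k} Y(k,n)$. The main obstacle I expect is purely notational bookkeeping in the composition law: keeping straight which permutation acts on which side of the tensor product, and verifying that the apparent twist $\binom{1}{\tau^{-1}}\otimes\binom{\tau}{1}$ genuinely commutes with the outer $(\sigma'';\mu'')$-action, which uses axiom (c) that left and right actions commute. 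Once that bookkeeping is done, everything follows from the universal property of colimits.
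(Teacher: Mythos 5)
Your proposal is correct and follows essentially the same route as the paper: the paper constructs $\otimes_{\Sigma_{\ub}}$ exactly as you describe (colimit of the diagram $D$ on objects, the induced map of diagrams from the commuting square on morphisms), declares the remaining naturality checks ``clear,'' and verifies the $1$-colored reduction in a subsequent example just as you do. You have simply written out the identity, composition, and bifunctoriality verifications that the paper leaves implicit.
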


The functor \eqref{eq:tensoroverb} should be compared with the tensoring over a category construction in \cite[VII Section 2]{mm}, but the details of this expanded version are exploited below.

\begin{example}
If $\frakC = \{*\}$, then the functor \eqref{eq:tensoroverb} takes the form
\[
\otimes_{\Sigma_{\ub}} \colon \cate^{\Sigma_m \times \Sigma_k^{op}} \times \cate^{\Sigma_k \times \Sigma_n^{op}} \to \cate^{\Sigma_m \times \Sigma_n^{op}}
\]
if $|\ub| = k$.  An object $X \in \cate^{\Sigma_m \times \Sigma_k^{op}}$ is an object $X \in \cate$ with a left $\Sigma_m$-action and a right $\Sigma_k$-action that commute with each other.  Likewise, an object $Y \in \cate^{\Sigma_k \times \Sigma_n^{op}}$ is an object in $\cate$ equipped with commuting $\Sigma_k$-$\Sigma_n$ actions.  Since $\Sigma_{\ub} = \Sigma_k$ (as a category with one object), the only object in the diagram $D(X,Y; \sigma\ud, \uc\mu)$ above is $X \otimes Y \in \cate$.  The map \eqref{tau''} now takes the form
\[
X \otimes Y \xrightarrow{g^{-1} \otimes g} X \otimes Y
\]
for $g \in \Sigma_k$.  Therefore, we have
\[
X \otimes_{\Sigma_{\ub}} Y
= \colim_{g \in \Sigma_k}\left(X \otimes Y \xrightarrow{g^{-1} \otimes g} X \otimes Y\right)
= X \otimes_{\Sigma_k} Y.
\]
The maps \eqref{XtensoroverbYmap}
\[
(X \otimes_{\Sigma_k} Y)(\sigma; \mu) \colon X \otimes_{\Sigma_k} Y \to X \otimes_{\Sigma_k} Y
\]
for $(\sigma; \mu) \in \Sigma_m \times \Sigma_n^{op}$ give the $\Sigma_m$-$\Sigma_n$ actions on $X \otimes_{\Sigma_k} Y$, which are induced by those on $X$ and $Y$.\qed
\end{example}

Using the construction $\otimes_{\Sigma_{\ub}}$, now we want to describe colored $\Sigma$-bimodules equipped with a vertical composition.  So let $\sfP = \left\{\sfP\binom{[\ud]}{[\uc]}\right\}$ and $\sfQ = \left\{\sfQ\binom{[\ud]}{[\uc]}\right\}$ be $\frakC$-colored $\Sigma$-bimodules over $\cate$.  Recall the decomposition $\sigmaec \cong \prod \cate^{\Sigma_{\ud;\uc}}$ (Lemma \ref{decomposition}), in which $\sfP\binom{[\ud]}{[\uc]} \in \cate^{\Sigma_{\ud;\uc}}$.  We define a functor
\[
\boxv \colon \sigmaec \times \sigmaec \to \sigmaec
\]
by setting
\begin{equation}
\label{boxvdc}
\left(\sfP \boxv \sfQ\right)\binom{[\ud]}{[\uc]} = \coprod_{[\ub]} \sfP\binom{[\ud]}{[\ub]} \otimes_{\Sigma_{\ub}} \sfQ\binom{[\ub]}{[\uc]} \in \cate^{\Sigma_{\ud;\uc}},
\end{equation}
where the coproduct is taken over all the orbit types of $\frakC$-profiles.

\begin{lemma}
\label{boxvmonoidal}
The functor $\boxv$ gives $\sigmaec$ the structure of a monoidal category.
\end{lemma}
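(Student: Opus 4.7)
The plan is to exhibit a unit object, construct natural unitor and associator isomorphisms for $\boxv$, and then verify the pentagon and triangle coherence axioms. The main technical input is that $\cate$ is closed symmetric monoidal, so $\otimes$ preserves colimits in each variable; combined with the fact that colimits commute with colimits, this lets us freely reorganize iterated $\otimes_{\Sigma_{\ub}}$ constructions.

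I would define the unit $\sfI \in \sigmaec$ componentwise by
\[
\sfI\binom{\ud}{\uc} \;=\; \coprod_{\lambda \in \Sigma_{|\uc|},\,\lambda\uc = \ud} I,
\]
a coproduct of copies of the monoidal unit $I$ of $\cate$ indexed by permutations carrying $\uc$ to $\ud$ as ordered $\frakC$-profiles, with the evident relabeling action making $\sfI$ a $\frakC$-colored $\Sigma$-bimodule; in particular $\sfI\binom{[\ud]}{[\uc]} = 0$ unless $[\ud] = [\uc]$. For the right unit isomorphism $\sfP \boxv \sfI \cong \sfP$, only the summand $[\ub] = [\uc]$ contributes in the formula \eqref{boxvdc}, and tracing through the diagram $D(\sfP, \sfI; \sigma\ud, \uc\mu)$ from \eqref{Dobj}--\eqref{tau''}, the $\Sigma_{\ub}$-colimit identifies all $\lambda$-summands with a single canonical copy of $\sfP\binom{\sigma\ud}{\uc\mu}$, using the right $\Sigma$-action on $\sfP$ to absorb each $\lambda$. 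The resulting map is natural in $\binom{\sigma\ud}{\uc\mu} \in \Sigma_{\ud;\uc}$ and in $\sfP$, so assembles into a $\Sigma$-bimodule isomorphism; the left unitor is dual.

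For associativity, both $((\sfP \boxv \sfQ) \boxv \sfR)\binom{[\ud]}{[\uc]}$ and $(\sfP \boxv (\sfQ \boxv \sfR))\binom{[\ud]}{[\uc]}$ unpack, after commuting $\otimes_{\Sigma_{\ub}}$ or $\otimes_{\Sigma_{\ue}}$ past the outer coproduct and the other tensor factor, to the common double coproduct
\[
\coprod_{[\ub],\,[\ue]} \sfP\binom{[\ud]}{[\ub]} \otimes_{\Sigma_{\ub}} \sfQ\binom{[\ub]}{[\ue]} \otimes_{\Sigma_{\ue}} \sfR\binom{[\ue]}{[\uc]},
\]
which we may read as a Fubini-style colimit over the product groupoid $\Sigma_{\ub} \times \Sigma_{\ue}$; the commutations use only that $\otimes$ preserves colimits in each variable and that colimits commute with colimits. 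The pentagon and triangle axioms then reduce to identities among such iterated colimits and tensor products in $\cate$, which hold by the universal property of colimits together with the coherence of $\otimes$ in $\cate$. The main (and only) obstacle is bookkeeping: keeping the indexing permutations straight so that every canonical map is $\Sigma_{\ud;\uc}$-equivariant, but this follows the classical template for showing that the circle product on one-colored $\Sigma$-objects is monoidal.
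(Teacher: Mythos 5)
Your associativity argument coincides with the paper's: both bracketings of $\sfP \boxv \sfQ \boxv \sfR$ are identified with a single colimit over the product groupoid $\Sigma_{\ub}\times\Sigma_{\ue}$, using that $\otimes$ preserves colimits in each variable, which is exactly how the paper verifies commutativity of its diagram \eqref{asstensoroverb}. Where you genuinely diverge is the unit. The paper takes $\bone\dc$ to be a single copy of $I$ when $[\ud]=[\uc]$ and $0$ otherwise, with all structure maps identities; you instead take $\sfI\binom{\ud}{\uc}=\coprod_{\lambda\uc=\ud}I$, the $I$-linearization of the hom-sets of the groupoid of profiles. Your choice is the correct one. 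With the paper's $\bone$, the $[\ub]=[\uc]$ summand of $(\sfP\boxv\bone)\binom{\sigma\ud}{\uc\mu}$ is a colimit over the connected groupoid $\Sigma_{\uc}$ of a diagram whose value at each object is $\sfP\binom{\sigma\ud}{\uc\tau^{-1}}\otimes I$, so it computes the coinvariants of $\sfP\binom{\sigma\ud}{\uc}$ under the stabilizer of $\uc$; already in the one-colored case this yields $\sfP(m,n)\otimes_{\Sigma_n}I=\sfP(m,n)_{\Sigma_n}$ rather than $\sfP(m,n)$, so $\bone$ fails to be a unit unless the relevant actions are trivial. Your $\sfI$ repairs this by the co-Yoneda/shearing argument you sketch: the free action of the stabilizer on the index set of $\coprod_\lambda I$ untwists its action on $\sfP$, and the colimit collapses to a single copy of $\sfP\binom{\sigma\ud}{\uc\mu}$. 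None of this affects anything downstream, since immediately after the lemma the authors discard the unit and treat $(\sigmaec,\boxv)$ as a monoidal category without unit, and only the nonunital associativity---where your proof and theirs agree---is used to define vPROPs; but on the unit your proof is the more careful of the two.
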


\begin{proof}
The unit of $\boxv$ is the object $\bone \in \sigmaec$ defined as
\[
\bone\dc = \begin{cases} 0 & \text{if $[\ud] \not= [\uc]$},\\
I & \text{if $[\ud] = [\uc]$},\end{cases}
\]
where $I$ is the unit of $\cate$.  The morphism $(\sigma;\tau) \colon \bone\dc \to \bone\binom{\sigma\ud}{\uc\tau}$ is the identity map of either $0$ or $I$, depending on whether $[\ud]$ and $[\uc]$ are equal or not.

The required associativity of $\boxv$ boils down to the associativity of the construction $\otimes_{\Sigma_{\ub}}$.  In other words, we need to show that the diagram
\begin{equation}
\label{asstensoroverb}
\SelectTips{cm}{10}
\xymatrix{
\cate^{\Sigma_{\ud;\ub}} \times \cate^{\Sigma_{\ub;\uc}} \times \cate^{\Sigma_{\uc;\ua}} \ar[rr]^-{\otimes_{\Sigma_{\ub}} \times Id} \ar[d]_-{Id \times \otimes_{\Sigma_{\uc}}} & &  \cate^{\Sigma_{\ud;\uc}} \times \cate^{\Sigma_{\uc;\ua}} \ar[d]^-{\otimes_{\Sigma_{\uc}}} \\
\cate^{\Sigma_{\ud;\ub}} \times \cate^{\Sigma_{\ub;\ua}} \ar[rr]_-{\otimes_{\ub}} & & \cate^{\Sigma_{\ud;\ua}}
}
\end{equation}
is commutative.  Suppose that $X \in \cate^{\Sigma_{\ud;\ub}}$, $Y \in \cate^{\Sigma_{\ub;\uc}}$, $Z \in \cate^{\Sigma_{\uc;\ua}}$, and $\binom{\sigma\ud}{\ua\nu} \in \Sigma_{\ud;\ua}$.  Note that $\otimes$ (being a left adjoint) commutes with colimits and is associative.  Therefore, either one of the two composites in \eqref{asstensoroverb}, when applied to $(X,Y,Z)$ and then to $\binom{\sigma\ud}{\ua\nu}$, gives the object
\begin{multline*}
\colim \left[ X\binom{\sigma\ud}{\ub\tau^{-1}} \otimes Y\binom{\tau\ub}{\uc\mu^{-1}} \otimes Z\binom{\mu\uc}{\ua\nu} \xrightarrow{\binom{1}{\tau''^{-1}} \otimes \binom{\tau''}{\mu''^{-1}} \otimes \binom{\mu''}{1}}\right.\\ \left.X\binom{\sigma\ud}{\ub\tau'^{-1}} \otimes Y\binom{\tau'\ub}{\uc\mu'^{-1}} \otimes Z\binom{\mu'\uc}{\ua\nu}\right]
\end{multline*}
in $\cate$.  This is the colimit of a diagram $D$ in $\cate$ indexed by the category $\Sigma_{\ub} \times \Sigma_{\uc}$.  For each pair of morphisms $\left(\tau'' \in \Sigma_{\ub}(\tau\ub;\tau'\ub); \mu'' \in \Sigma_{\uc}(\mu\uc;\mu'\uc)\right) \in \Sigma_{\ub} \times \Sigma_{\uc}$, the diagram $D$ has a morphism as indicated.  A similar observation shows that the two composites in \eqref{asstensoroverb} agree on morphisms in $\Sigma_{\ud;\ua}$ as well.
\end{proof}

We will not use the unit $\bone$ of $\boxv$ below and will consider $(\sigmaec,\boxv)$ as a monoidal category without unit.

\begin{definition}
\label{def:vprop}
Denote by $\vprope$ the category of monoids in the monoidal category $(\sigmaec, \boxv)$ (without unit), whose objects are called \textbf{vPROPs}, with $v$ standing for \emph{vertical}.
\end{definition}

Unwrapping the definitions of $\boxv$ and an associative map $\sfP \boxv \sfP \to \sfP$, we have the following description of a vPROP.

\begin{proposition}
\label{vprop}
A vPROP consists of precisely the following data:
\begin{enumerate}
\item
An object $\sfP \in \sigmaec$.
\item
A \textbf{vertical composition}
\[
\sfP\binom{[\ud]}{[\ub]} \otimes_{\Sigma_{\ub}} \sfP\binom{[\ub]}{[\uc]} \xrightarrow{\circ} \sfP\binom{[\ud]}{[\uc]}
\]
in $\cate^{\Sigma_{\ud;\uc}}$ that is associative in the obvious sense.
\end{enumerate}
Moreover, a morphism of vPROPs is precisely a morphism in $\sigmaec$ that preserves the vertical compositions.
\end{proposition}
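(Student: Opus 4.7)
The plan is to unpack the definition of a monoid in $(\sigmaec, \boxv)$ into the more concrete data described in the statement, using the decomposition of $\sigmaec$ established in Lemma \ref{decomposition} and the explicit formula for $\boxv$ given in \eqref{boxvdc}.

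First I would start from a monoid structure, i.e., an associative morphism $\mu \colon \sfP \boxv \sfP \to \sfP$ in $\sigmaec$. Evaluating on a pair of orbit types $([\ud], [\uc])$ and using the definition
\[
\left(\sfP \boxv \sfP\right)\binom{[\ud]}{[\uc]} = \coprod_{[\ub]} \sfP\binom{[\ud]}{[\ub]} \otimes_{\Sigma_{\ub}} \sfP\binom{[\ub]}{[\uc]}
\]
together with the universal property of coproducts in $\cate^{\Sigma_{\ud;\uc}}$, the map $\mu$ is equivalent to a family of morphisms
\[
\circ \colon \sfP\binom{[\ud]}{[\ub]} \otimes_{\Sigma_{\ub}} \sfP\binom{[\ub]}{[\uc]} \to \sfP\binom{[\ud]}{[\uc]}
\]
in $\cate^{\Sigma_{\ud;\uc}}$, one for each orbit type $[\ub]$. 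Conversely, any such family assembles into a single morphism $\mu$ by the coproduct property.

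Next I would translate the associativity constraint. The monoid axiom says the two composites around the pentagon
\[
\sfP \boxv \sfP \boxv \sfP \rightrightarrows \sfP \boxv \sfP \to \sfP
\]
agree, where the two parallel arrows use $\mu \boxv 1$ and $1 \boxv \mu$ together with the associator of $\boxv$. Evaluating at $([\ud],[\ua])$ unfolds the triple $\boxv$-product, via the commutative diagram \eqref{asstensoroverb} from the proof of Lemma \ref{boxvmonoidal}, into a coproduct over pairs $([\ub],[\uc])$ of the iterated constructions
\[
\sfP\binom{[\ud]}{[\ub]} \otimes_{\Sigma_{\ub}} \sfP\binom{[\ub]}{[\uc]} \otimes_{\Sigma_{\uc}} \sfP\binom{[\uc]}{[\ua]}.
\]
Again by the universal property of coproducts, the two composites agree if and only if they agree on each summand, and this is precisely the usual associativity condition for the vertical compositions $\circ$.

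Finally, the statement about morphisms follows by the same unwrapping: a morphism of monoids is a map $f \colon \sfP \to \sfQ$ in $\sigmaec$ with $\mu_{\sfQ} \circ (f \boxv f) = f \circ \mu_{\sfP}$, and restricting to each $[\ub]$-summand of $\boxv$ shows this is equivalent to $f$ preserving each vertical composition. The only genuine work is bookkeeping: making sure that the coproduct/universal-property manipulations are compatible with the $\Sigma_{\ud;\uc}$-equivariance, but that is automatic since the decomposition of Lemma \ref{decomposition} is an isomorphism of categories and $\boxv$ is defined component-wise. I do not expect any serious obstacle; the proposition is essentially a formal reformulation.
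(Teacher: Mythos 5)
Your proposal is correct and matches the paper's (essentially omitted) argument: the paper simply asserts that the proposition follows by ``unwrapping the definitions of $\boxv$ and an associative map $\sfP \boxv \sfP \to \sfP$,'' which is precisely the coproduct/universal-property bookkeeping you carry out. The only point worth making explicit is that $\otimes_{\Sigma_{\ub}}$ distributes over the coproducts defining $\boxv$ (since it is built from colimits and $\otimes$ preserves them), which is what lets you identify the triple product with the double coproduct in your associativity step.
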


By considering the definition of $\otimes_{\Sigma_{\ub}}$, one can unwrap the vertical composition one step further and describe it as an associative map
\begin{equation}
\label{verticalcomp}
\sfP\binom{\ud}{\ub} \otimes \sfP\binom{\ub}{\uc} \xrightarrow{\circ} \sfP\binom{\ud}{\uc}
\end{equation}
in $\cate$ for any $\frakC$-profiles $\ub$, $\uc$, and $\ud$ (not just representatives of orbit types).  It is equivariant, in the sense that the diagram
\begin{equation}
\label{verticalequivariance}
\SelectTips{cm}{10}
\xymatrix{
\sfP\dbinom{\ud}{\ub\tau^{-1}} \otimes \sfP\dbinom{\tau\ub}{\uc} \ar@{=}[r] & \sfP\dbinom{\ud}{\ub\tau^{-1}} \otimes \sfP\dbinom{\tau\ub}{\uc} \ar[d]^-{\circ} \\
\sfP\dbinom{\ud}{\ub} \otimes \sfP\dbinom{\ub}{\uc} \ar[u]^-{(1;\tau^{-1}) \otimes (\tau;1)} \ar[r]^-{\circ} \ar[d]_-{(\sigma;1) \otimes (1;\mu)} & \sfP\dbinom{\ud}{\uc} \ar[d]^-{(\sigma;\mu)} \\
\sfP\dbinom{\sigma\ud}{\ub} \otimes \sfP\dbinom{\ub}{\uc\mu} \ar[r]^-{\circ} & \sfP\dbinom{\sigma\ud}{\uc\mu}
}
\end{equation}
is commutative for $\sigma \in \Sigma_{|\ud|}$, $\mu \in \Sigma_{|\uc|}$, and $\tau \in \Sigma_{|\ub|}$.

Next we build the horizontal composition in a colored PROP.  To do this, we need to construct a functor
\[
\boxdot \colon \cate^{\Sigma_{\ud;\uc}} \times \cate^{\Sigma_{\ub;\ua}} \to \cate^{\Sigma_{(\ud,\ub);(\uc,\ua)}}.
\]
This functor is used to construct a monoidal product $\boxh$ on $\vprope$.  We then use $\boxh$ to describe PROPs as monoids in $(\vprope, \boxh)$.  Remembering that $\vprope$ is the category of $\boxv$-monoids in $\sigmaec$, this says that PROPs are \emph{$\boxv$-monoidal $\boxh$-monoids}, or \emph{monoidal monoids} for short.

The functor $\boxdot$ is constructed as an inclusion functor followed by a left Kan extension.  Indeed, there is a(n external product) functor
\[
\iota \colon \cate^{\Sigma_{\ud;\uc}} \times \cate^{\Sigma_{\ub;\ua}} \to \cate^{\Sigma_{\ud} \times \Sigma_{\ub} \times \Sigma_{\uc}^{op} \times \Sigma_{\ua}^{op}}, \quad (X,Y) \mapsto X \otimes Y
\]
that sends $(X,Y) \in \cate^{\Sigma_{\ud;\uc}} \times \cate^{\Sigma_{\ub;\ua}}$ to the diagram $X \otimes Y$ with
\begin{equation}
\label{XtensorY}
(X \otimes Y)\left(\sigma\ud; \mu\ub; \uc\tau^{-1}; \ua\nu^{-1}\right) = X\binom{\sigma\ud}{\uc\tau^{-1}} \otimes Y\binom{\mu\ub}{\ua\nu^{-1}},
\end{equation}
and similarly for maps in $\Sigma_{\ud} \times \Sigma_{\ub} \times \Sigma_{\uc}^{op} \times \Sigma_{\ua}^{op}$.  On the other hand, the subcategory inclusion
\[
\left(\Sigma_{\ud} \times \Sigma_{\ub}\right) \times \left(\Sigma_{\uc}^{op} \times \Sigma_{\ua}^{op}\right) \xrightarrow{i} \Sigma_{(\ud,\ub);(\uc,\ua)} = \Sigma_{(\ud;\ub)} \times \Sigma_{(\uc,\ua)}^{op}
\]
induces a functor on the diagram categories
\begin{equation}
\label{ei}
\cate^{i} \colon \cate^{\Sigma_{(\ud,\ub);(\uc,\ua)}} \to \cate^{\Sigma_{\ud} \times \Sigma_{\ub} \times \Sigma_{\uc}^{op} \times \Sigma_{\ua}^{op}}.
\end{equation}
This last functor has a left adjoint given by left Kan extension
\begin{equation}
\label{leftKanext}
K \colon \cate^{\Sigma_{\ud} \times \Sigma_{\ub} \times \Sigma_{\uc}^{op} \times \Sigma_{\ua}^{op}} \to \cate^{\Sigma_{(\ud,\ub);(\uc,\ua)}},
\end{equation}
which is left adjoint to the functor $\cate^i$ (\cite[pp.236-240]{maclane}).  Then we define the functor $\boxdot$ as the composite $\boxdot = K\iota$.

\begin{lemma}
\label{boxdotassociative}
The functor
\[
\boxdot = K\iota \colon \cate^{\Sigma_{\ud;\uc}} \times \cate^{\Sigma_{\ub;\ua}} \to \cate^{\Sigma_{(\ud,\ub);(\uc,\ua)}}
\]
is associative in the obvious sense.
\end{lemma}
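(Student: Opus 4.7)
The plan is to identify both iterated products $(X \boxdot Y) \boxdot Z$ and $X \boxdot (Y \boxdot Z)$ with a single left Kan extension of the triple external tensor product $X \otimes Y \otimes Z$ along the natural inclusion of groupoids, so that associativity follows from the uniqueness of Kan extensions up to canonical isomorphism.

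Given $X \in \cate^{\Sigma_{\ud;\uc}}$, $Y \in \cate^{\Sigma_{\ub;\ua}}$, and $Z \in \cate^{\Sigma_{\ue;\underline{f}}}$, consider the subcategory inclusion
\[
i_{123} \colon \Sigma_{\ud} \times \Sigma_{\ub} \times \Sigma_{\ue} \times \Sigma_{\uc}^{op} \times \Sigma_{\ua}^{op} \times \Sigma_{\underline{f}}^{op} \longrightarrow \Sigma_{(\ud,\ub,\ue);(\uc,\ua,\underline{f})}.
\]
It factors in two evident ways: either as $i_{(12)3} \circ (i_{12} \times \mathrm{id})$, grouping the first two blocks first, or as $i_{1(23)} \circ (\mathrm{id} \times i_{23})$, grouping the last two blocks first; both factorizations yield the same functor $i_{123}$. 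Moreover, associativity of $\otimes$ in $\cate$ immediately yields associativity of the external product $\iota$, so the iterated external tensor product $X \otimes Y \otimes Z$ is unambiguously defined as a functor on the displayed source.

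The key technical point is the compatibility of left Kan extension with external tensor product: for a functor $f \colon A \to B$ and any small category $D$, there is a natural isomorphism
\[
\mathrm{Lan}_{f \times \mathrm{id}_D}(F \otimes G) \;\cong\; \bigl(\mathrm{Lan}_{f}\, F\bigr) \otimes G
\]
for $F \in \cate^{A}$ and $G \in \cate^D$. This follows from the pointwise colimit formula $\mathrm{Lan}_{f} F(b) = \colim_{(f/b)} F$, the identification $(f \times \mathrm{id}_D)/(b,d) \cong (f/b) \times (\mathrm{id}_D/d)$, the fact that $\mathrm{id}_D/d$ has a terminal object, and the preservation of colimits in each variable by $\otimes$ (a consequence of the closed symmetric monoidal structure on $\cate$). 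Applying this observation (and its right-hand analogue) with $f = i_{12}$ and $f = i_{23}$, together with the composition law for left Kan extensions, one obtains
\[
(X \boxdot Y) \boxdot Z \;=\; \mathrm{Lan}_{i_{(12)3}}\bigl(\mathrm{Lan}_{i_{12} \times \mathrm{id}}(X \otimes Y \otimes Z)\bigr) \;\cong\; \mathrm{Lan}_{i_{123}}(X \otimes Y \otimes Z),
\]
and symmetrically $X \boxdot (Y \boxdot Z) \cong \mathrm{Lan}_{i_{123}}(X \otimes Y \otimes Z)$. Uniqueness of Kan extensions then produces a canonical associativity isomorphism $(X \boxdot Y) \boxdot Z \cong X \boxdot (Y \boxdot Z)$, natural in $X$, $Y$, and $Z$.

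The main obstacle is the commutation lemma relating $\mathrm{Lan}$ and external tensor; once it is in place, the rest is formal bookkeeping with Kan extensions. An alternative route, if one prefers to avoid invoking closedness explicitly here, is to compute $X \boxdot Y$ directly from its pointwise colimit formula; since all the indexing categories are groupoids, these colimits decompose as coproducts indexed by orbits under the relevant symmetric groups, and associativity then reduces to a straightforward combinatorial matching of summands using associativity of $\otimes$ and of profile concatenation.
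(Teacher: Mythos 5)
Your proof is correct and follows essentially the same route as the paper, whose entire argument is the one-line observation that associativity of $\boxdot$ follows from associativity of $\otimes$ and the universal properties of left Kan extensions. You have simply made explicit the key commutation lemma $\mathrm{Lan}_{f \times \mathrm{id}}(F \otimes G) \cong (\mathrm{Lan}_f F) \otimes G$ that the paper leaves implicit, and your justification of it (pointwise colimit formula, comma-category decomposition, and $\otimes$ preserving colimits in each variable) is sound.
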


\begin{proof}
The associativity of $\boxdot$ is a consequence of the associativity of $\otimes$ in $\cate$ \eqref{XtensorY} and the universal properties of left Kan extensions.
\end{proof}

Now we want to use $\boxdot$ to define a monoidal product $\boxh$ on $\vprope$.  Let $\sfP$ and $\sfQ$ be vPROPs.  First define the object $\sfP \boxh \sfQ \in \sigmaec \cong \prod \cate^{\Sigma_{\ud;\uc}}$ (Lemma \ref{decomposition}) by setting
\begin{equation}
\label{boxhsigmaobject}
(\sfP \boxh \sfQ)\binom{[\ud]}{[\uc]} = \coprod_{\substack{\ud = (\ud_1, \ud_2) \\ \uc = (\uc_1, \uc_2)}} \sfP\binom{[\ud_1]}{[\uc_1]} \boxdot \sfQ\binom{[\ud_2]}{[\uc_2]} \in \cate^{\Sigma_{\ud;\uc}}
\end{equation}
for any pair of orbit types $[\ud]$ and $[\uc]$.

\begin{lemma}
\label{boxhmonoidal}
The definition \eqref{boxhsigmaobject} gives a monoidal product
\[
\boxh \colon \vprope \times \vprope \to \vprope
\]
on the category $\vprope$.
\end{lemma}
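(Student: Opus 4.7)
The plan is to verify two things: first, that $\sfP \boxh \sfQ$ inherits a vertical composition whenever $\sfP$ and $\sfQ$ are vPROPs, so that $\boxh$ actually lands in $\vprope$; second, that $\boxh$ is associative (in the non-unital monoidal sense, matching how $(\sigmaec,\boxv)$ is being treated), so that $(\vprope, \boxh)$ is a monoidal category.

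For the first step, I will construct the vertical composition on $\sfP \boxh \sfQ$ at the level of components \eqref{verticalcomp}. Unpacking \eqref{boxhsigmaobject} together with the left Kan extension $K$ that appears in the definition of $\boxdot$, the object $(\sfP \boxh \sfQ)\binom{\ud}{\ub} \otimes (\sfP \boxh \sfQ)\binom{\ub}{\uc}$ is a coproduct of pieces of the form
\[
\sfP\binom{\ud_1}{\ub_1} \otimes \sfQ\binom{\ud_2}{\ub_2} \otimes \sfP\binom{\ub'_1}{\uc_1} \otimes \sfQ\binom{\ub'_2}{\uc_2},
\]
indexed by splittings $\ud = (\ud_1,\ud_2)$, $\ub = (\ub_1,\ub_2) = (\ub'_1,\ub'_2)$, and $\uc = (\uc_1,\uc_2)$. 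When the two splittings of $\ub$ agree, the vertical compositions of $\sfP$ and $\sfQ$ applied blockwise produce a map to $\sfP\binom{\ud_1}{\uc_1} \otimes \sfQ\binom{\ud_2}{\uc_2}$, which embeds into the target via \eqref{boxhsigmaobject}. When the two splittings of $\ub$ differ by a permutation, I will use the equivariance of $\sfP$ and $\sfQ$ to re-index one factor until the splittings match, then compose. The universal property of $K$ ensures this re-indexing descends to the coend presentation of $\boxdot$. The compatibility with $\otimes_{\Sigma_{\ub}}$ required by Proposition \ref{vprop} is then a consequence of the $\Sigma_{\ub}$-equivariance diagram \eqref{verticalequivariance} for $\sfP$ and $\sfQ$ separately, while associativity of the resulting composition is inherited from the associativity of $\circ$ on $\sfP$ and on $\sfQ$.

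For the second step, associativity of $\boxh$ follows from the associativity of $\boxdot$ (Lemma \ref{boxdotassociative}) once I observe that both $(\sfP \boxh \sfQ) \boxh \sfR$ and $\sfP \boxh (\sfQ \boxh \sfR)$ admit the same coproduct description indexed by three-fold splittings $\ud = (\ud_1,\ud_2,\ud_3)$, $\uc = (\uc_1,\uc_2,\uc_3)$. The coherence of this identification uses that coproducts, external tensor products, and left Kan extensions commute with one another in the appropriate way. That the associator is a morphism in $\vprope$, not merely in $\sigmaec$, is then automatic from the blockwise definition of the vertical composition above.

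The main obstacle, as is typical with these coend-style constructions, is the splitting-mismatch issue in defining the vertical composition on $\sfP \boxh \sfQ$: summands indexed by different decompositions of the middle profile $\ub$ must be identified via the $\Sigma_{\ub}$-action underlying $\otimes_{\Sigma_{\ub}}$ before any composition makes sense. The left Kan extension $K$ in the definition of $\boxdot$ is precisely the device that allows this identification to be performed coherently, and pinning down the correct equivariance data relating the $\ud$-, $\ub$-, and $\uc$-permutations is the most technical aspect of the argument. Once that bookkeeping is in place, both the $\boxv$-monoid axiom for $\sfP \boxh \sfQ$ and the associativity of $\boxh$ on $\vprope$ reduce to universal-property manipulations.
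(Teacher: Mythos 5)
Your overall strategy is the paper's: define the vertical composition on $\sfP \boxh \sfQ$ summand by summand, induce it from the blockwise compositions of $\sfP$ and $\sfQ$ via the universal property of the left Kan extension $K$, check $\Sigma_{\ub}$-equivariance using \eqref{verticalequivariance}, and deduce associativity of $\boxh$ from Lemma \ref{boxdotassociative}. However, there is one step that would fail as stated: your treatment of the cross terms where the two splittings of the middle profile $\ub$ disagree. The coproduct in \eqref{boxhsigmaobject} is indexed by splittings of an \emph{ordered} profile into two consecutive blocks, so two distinct splittings $(\ub_1,\ub_2)$ and $(\ub'_1,\ub'_2)$ of the same $\ub$ necessarily have $|\ub_1| \neq |\ub'_1|$; no permutation can reconcile them, and even if one could, composing a $\sfP$-operation whose inputs are colored by $\ub_1$ with a $\sfP$-operation whose outputs are colored by $\ub'_1 \neq \ub_1$ is simply undefined. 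Moreover, the $\Sigma_{\ub}$-action underlying $\otimes_{\Sigma_{\ub}}$ preserves the splitting index of each summand (it only moves objects within a fixed $\Sigma_{\ud;\ub}$-component of the Kan-extended diagram), so these mismatched cross terms genuinely survive into the colimit and must be assigned a value. The paper's resolution is to declare the restriction of $\circ$ to such summands to be the \emph{zero} map, which is where the standing assumption that $\cate$ has a zero object is used; only on the matching summands (where $\ub_1 = \ub'_1$, hence $\ub_2 = \ub'_2$) is $\circ$ induced from the vertical compositions of $\sfP$ and $\sfQ$ as you describe.

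You are also conflating two distinct issues: the permutation identifications internal to $\otimes_{\Sigma_{\ub}}$ (already encoded in the colimit over $\Sigma_{\ub}$ and respected by the blockwise composition precisely because of the equivariance \eqref{verticalequivariance}) versus the splitting-mismatch problem coming from the coproduct over decompositions in \eqref{boxhsigmaobject}. The left Kan extension is the device that handles the former, not the latter. Once the cross terms are set to zero, the rest of your argument (associativity of the vertical composition from those of $\sfP$ and $\sfQ$ together with naturality of $\boxdot$, and associativity of $\boxh$ from that of $\boxdot$) goes through exactly as in the paper.
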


\begin{proof}
In \eqref{boxhsigmaobject} we already defined $\sfP \boxh \sfQ$ as an object in $\sigmaec$.  To make it into a vPROP, we need an associative vertical composition (Proposition \ref{vprop})
\[
(\sfP \boxh \sfQ)\binom{[\ud]}{[\ub]} \otimes_{\Sigma_{\ub}} (\sfP \boxh \sfQ)\binom{[\ub]}{[\uc]} \xrightarrow{\circ} (\sfP \boxh \sfQ)\binom{[\ud]}{[\uc]} \in \cate^{\Sigma_{\ud;\uc}}.
\]
Since $\boxh$ is defined as a coproduct, we only need to define $\circ$ when restricted to a typical summand of the source:
\begin{multline*}
\left[\sfP\binom{[\ud_1]}{[\ub_1]} \boxdot \sfQ\binom{[\ud_2]}{[\ub_2]}\right] \otimes_{\Sigma_{\ub}} \left[\sfP\binom{[\ub'_1]}{[\uc_1]} \boxdot \sfQ\binom{[\ub'_2]}{[\uc_2]}\right]\\
\xrightarrow{\circ} \sfP\binom{[\ud_1]}{[\uc_1]} \boxdot \sfQ\binom{[\ud_2]}{[\uc_2]} \hookrightarrow (\sfP \boxh \sfQ)\binom{[\ud]}{[\uc]}.
\end{multline*}
This restriction of $\circ$ is defined as the $0$ map, unless $\ub_1 = \ub'_1$ (which implies $\ub_2 = \ub'_2$), in which case this $\circ$ is induced by those on $\sfP$ and $\sfQ$ (in the form \eqref{verticalcomp}), using the left Kan extension description of $\boxdot = K\iota$.  The associativity of $\circ$ follows from those on $\sfP$ and $\sfQ$, the associativity of $\otimes_{\Sigma_{\ub}}$, and the naturality of the construction $\boxdot$.  So $\sfP \boxh \sfQ$ is indeed a vPROP.  The associativity of $\boxh$ follows from that of $\boxdot$ (Lemma \ref{boxdotassociative}) and the definition ~\eqref{boxhsigmaobject}.
\end{proof}

\begin{definition}
\label{def:prop}
Denote by $\prope$ the category of monoids in the monoidal category (without unit) $(\vprope, \boxh)$, whose objects are called \textbf{$\frakC$-colored PROPs}.
\end{definition}

Unwrapping the definition of an associative map $\sfP \boxh \sfP \to \sfP$ of vPROPs and using Proposition \ref{vprop}, we have the following description of $\frakC$-colored PROPs.

\begin{proposition}
\label{coloredprop}
A $\frakC$-colored PROP consists of exactly the following data:
\begin{enumerate}
\item
An object $\sfP \in \sigmaec$.
\item
An associative \textbf{vertical composition}
\[
\sfP\binom{[\ud]}{[\ub]} \otimes_{\Sigma_{\ub}} \sfP\binom{[\ub]}{[\uc]} \xrightarrow{\circ} \sfP\binom{[\ud]}{[\uc]}
\]
in $\cate^{\Sigma_{\ud;\uc}}$.
\item
An associative \textbf{horizontal composition}
\begin{equation}
\label{horizontalcomposition}
\sfP\binom{[\ud_1]}{[\uc_1]} \boxdot \sfP\binom{[\ud_2]}{[\uc_2]} \xrightarrow{\otimes} \sfP\binom{[\ud_1,\ud_2]}{[\uc_1,\uc_2]}
\end{equation}
in $\cate^{\Sigma_{(\ud_1,\ud_2);(\uc_1,\uc_2)}}$.
\end{enumerate}
The assembled map
\[
\otimes \colon \sfP \boxh \sfP \to \sfP
\]
in $\sigmaec$ is required to be a map of vPROPs, a condition called the \textbf{interchange rule}.  Moreover, a map of $\frakC$-colored PROPs is exactly a map in $\sigmaec$ that preserves both the vertical and the horizontal compositions.
\end{proposition}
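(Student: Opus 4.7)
The plan is to simply unwrap Definition~\ref{def:prop} using the two descriptions already established, namely Proposition~\ref{vprop} for vPROPs and the coproduct formula~\eqref{boxhsigmaobject} for $\boxh$. By Definition~\ref{def:prop}, a $\frakC$-colored PROP is a pair $(\sfP,\otimes)$ where $\sfP$ is a vPROP and $\otimes\colon \sfP\boxh\sfP\to\sfP$ is an associative morphism in $\vprope$. Applying Proposition~\ref{vprop} to $\sfP$ yields item (1) (an underlying object in $\sigmaec$) and item (2) (an associative vertical composition).

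For item (3), I would use that by~\eqref{boxhsigmaobject}, the source $(\sfP\boxh\sfP)\binom{[\ud]}{[\uc]}$ is, in each component $\cate^{\Sigma_{\ud;\uc}}$, a coproduct over splittings $\ud=(\ud_1,\ud_2)$, $\uc=(\uc_1,\uc_2)$ of terms $\sfP\binom{[\ud_1]}{[\uc_1]}\boxdot\sfP\binom{[\ud_2]}{[\uc_2]}$. By the universal property of coproducts, giving a map in $\sigmaec$ out of $\sfP\boxh\sfP$ is the same as giving maps out of each summand, which are precisely the horizontal composition maps~\eqref{horizontalcomposition}. Associativity of $\otimes$ as a monoid multiplication unpacks, via the associativity of $\boxdot$ (Lemma~\ref{boxdotassociative}) and the coproduct decomposition, into the claimed associativity of the horizontal composition on $\sfP$.

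Next, the statement that $\otimes\colon\sfP\boxh\sfP\to\sfP$ is a morphism of vPROPs (as opposed to merely a morphism in $\sigmaec$) means by Proposition~\ref{vprop} that it commutes with the vertical composition on source and target. Tracing through the definition of the vertical composition on $\sfP\boxh\sfP$ given in the proof of Lemma~\ref{boxhmonoidal} (where the componentwise formula uses the vertical compositions of $\sfP$ on matching middle profiles and is zero otherwise), this compatibility condition becomes exactly the classical interchange identity between horizontal and vertical composition. This is the step requiring the most care: one must verify that the componentwise formula for $\circ$ on $\sfP\boxh\sfP$ combined with the left Kan extension description $\boxdot=K\iota$ does indeed translate into the usual interchange rule, and it is the main bookkeeping obstacle in the argument.

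Finally, for the morphism statement, a morphism of monoids in $(\vprope,\boxh)$ is by definition a morphism of vPROPs that commutes with the respective monoid multiplications; the former gives, via Proposition~\ref{vprop}, a map in $\sigmaec$ preserving vertical composition, while the latter (again using the coproduct description of $\boxh$) unpacks summand by summand into preservation of the horizontal composition maps~\eqref{horizontalcomposition}. Conversely, any data as in (1)--(3) satisfying the interchange rule assembles to a vPROP structure together with a $\boxh$-monoid multiplication by reversing the above steps, so the two descriptions are equivalent.
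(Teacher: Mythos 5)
Your proposal is correct and follows essentially the same route as the paper, which presents this proposition as a direct unwrapping of Definition~\ref{def:prop} via Proposition~\ref{vprop}, the coproduct formula~\eqref{boxhsigmaobject}, and the vPROP structure on $\sfP \boxh \sfP$ from Lemma~\ref{boxhmonoidal}. The paper in fact gives no separate proof beyond the one-sentence remark preceding the statement, so your explicit tracing of the coproduct decomposition and the interchange condition is simply a more detailed account of the same argument.
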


As in the case of the vertical composition \eqref{verticalcomp}, we may go one step further in unwrapping the horizontal composition.  At the level of $\cate$, the horizontal composition consists of associative maps
\begin{equation}
\label{horizontalcomposition2}
\sfP\binom{\ud_1}{\uc_1} \otimes \sfP\binom{\ud_2}{\uc_2} \xrightarrow{\otimes} \sfP\binom{\ud_1,\ud_2}{\uc_1,\uc_2}
\end{equation}
for any $\frakC$-profiles $\ud_1$, $\ud_2$, $\uc_1$, and $\uc_2$.
These maps are bi-equivariant, in the sense that the square
\begin{equation}
\label{horizontalequivariance}
\SelectTips{cm}{10}
\xymatrix{
\sfP\dbinom{\ud_1}{\uc_1} \otimes \sfP\dbinom{\ud_2}{\uc_2} \ar[r]^-{\otimes} \ar[d]_-{(\sigma_1;\tau_1) \otimes (\sigma_2;\tau_2)} & \sfP\dbinom{\ud_1,\ud_2}{\uc_1,\uc_2} \ar[d]^-{(\sigma_1 \times \sigma_2; \tau_1 \times \tau_2)}\\
\sfP\dbinom{\sigma_1\ud_1}{\uc_1\tau_1} \otimes \sfP\dbinom{\sigma_2\ud_2}{\uc_2\tau_2} \ar[r]^-{\otimes} & \sfP\dbinom{\sigma_1\ud_1,\sigma_2\ud_2}{\uc_1\tau_1,\uc_2\tau_2}
}
\end{equation}
is commutative for all $\sigma_i \in \Sigma_{|\ud_i|}$ and $\tau_i \in \Sigma_{|\uc_i|}$.  In totally unwrapped form, the interchange rule says that the diagram
\begin{equation}
\label{interchangerule}
\SelectTips{cm}{10}
\xymatrix{
\left[\sfP\binom{\ud_1}{\ub_1} \otimes \sfP\binom{\ud_2}{\ub_2}\right] \otimes \left[\sfP\binom{\ub_1}{\uc_1} \otimes \sfP\binom{\ub_2}{\uc_2}\right] \ar[r]^-{\text{switch}}_-{\cong} \ar[dd]_-{(\otimes, \otimes)} &
\left[\sfP\binom{\ud_1}{\ub_1} \otimes \sfP\binom{\ub_1}{\uc_1}\right] \otimes \left[\sfP\binom{\ud_2}{\ub_2} \otimes \sfP\binom{\ub_2}{\uc_2}\right] \ar[d]^-{(\circ, \circ)}\\
& \sfP\binom{\ud_1}{\uc_1} \otimes \sfP\binom{\ud_2}{\uc_2} \ar[d]^-{\otimes}\\
\sfP\binom{\ud_1,\ud_2}{\ub_1,\ub_2} \otimes \sfP\binom{\ub_1,\ub_2}{\uc_1,\uc_2} \ar[r]^-{\circ} & \sfP\binom{\ud_1,\ud_2}{\uc_1,\uc_2}
}
\end{equation}
is commutative.

Note that the interchange rule \eqref{interchangerule} is symmetric with respect to the vertical and the horizontal compositions.  In fact, it is possible to describe $\frakC$-colored PROPs as monoidal monoids in the other order, i.e., as $\boxh$-monoidal $\boxv$-monoids.  We will prove this in Section \ref{app:coloredPROPs}.

The following Lemma, which will be needed in the following sections, follows easily from the construction (see \cite[2.5-2.8]{fmy} and comments above Theorem \ref{props'}) of (free) $\frakC$-colored PROPs and Proposition \ref{coloredprop}.  A slight modification of the discussion in \cite[4.3-4.5]{fresse} gives another proof.

\begin{lemma}
\label{colimits}
The category of $\frakC$-colored PROPs over $\cate$ has all small limits and colimits.  Filtered
colimits, reflexive coequalizers, and all limits are constructed entrywise as in the underlying
category $\sigmaec$ of $\frakC$-colored $\Sigma$-bimodules.
\end{lemma}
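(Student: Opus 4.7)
The plan is to exploit the description of $\frakC$-colored PROPs as monoidal monoids (Proposition \ref{coloredprop}) together with the free/forgetful adjunction $F \dashv U$ between $\sigmaec$ and $\prope$ referenced before the statement. First observe that $\sigmaec = \cate^{\catp_l(\frakC) \times \catp_r(\frakC)}$ is complete and cocomplete, with both constructions computed entrywise in $\cate$, so the real content is transferring the relevant (co)limits through $U$.

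For limits in $\prope$, given a small diagram $D \colon J \to \prope$, form $L = \lim_J UD$ entrywise in $\sigmaec$, with projections $\pi_j \colon L \to UD_j$. Functoriality of $\boxv$ yields maps $\pi_j \boxv \pi_j \colon L \boxv L \to UD_j \boxv UD_j$; post-composing with the vertical composition of $D_j$ produces a cone over $UD$ from $L \boxv L$, and the universal property of $L$ gives a unique factorization $\circ_L \colon L \boxv L \to L$. The same reasoning with $\boxh$ in place of $\boxv$ produces a horizontal composition $\otimes_L \colon L \boxh L \to L$. Associativity, equivariance, and the interchange rule \eqref{interchangerule} then follow from the corresponding identities on each $D_j$ together with the uniqueness clause in the universal property of $L$, after which it is straightforward to check $L$ really is the limit in $\prope$.

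For filtered colimits and reflexive coequalizers, the key point is that the bifunctors $\boxv$ and $\boxh$ preserve such colimits separately in each variable. These bifunctors are built from $\otimes$, coproducts, the coend $\otimes_{\Sigma_{\ub}}$ described in \eqref{boxvdc}, and the left Kan extension $K$ appearing in \eqref{boxhsigmaobject}. Since $\cate$ is closed symmetric monoidal, $\otimes$ is cocontinuous in each variable; coproducts, coends, and Kan extensions are themselves colimits in $\cate$ and so commute with filtered colimits and with reflexive coequalizers. Hence, for a filtered diagram $\{X_j\}$ or a reflexive pair in $\prope$, if $X$ denotes the entrywise colimit in $\sigmaec$, there are canonical isomorphisms $X \boxv X \cong \colim_j (X_j \boxv X_j)$ and similarly for $\boxh$; the compositions on the $X_j$ therefore assemble into compositions on $X$, and the required axioms pass to the colimit by naturality of all the constructions involved.

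For arbitrary small colimits, one invokes the free-forgetful adjunction: since $U$ preserves reflexive coequalizers by the previous paragraph and $F$ is a left adjoint, the monad $T = UF$ preserves reflexive coequalizers. A standard monadicity argument (compare the discussion in \cite[4.3-4.5]{fresse}) then produces all small colimits in $\prope$ as reflexive coequalizers of diagrams of free PROPs built from entrywise colimits in $\sigmaec$. The main obstacle is the verification that $\boxv$ and $\boxh$ preserve reflexive coequalizers in each variable; this requires chasing the coend and Kan extension definitions through the relevant colimit-interchange isomorphisms and leans critically on the closedness of $\cate$ to ensure that $\otimes$ distributes over arbitrary colimits on each side.
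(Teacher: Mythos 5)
Your argument is correct and is essentially the second route the paper points to: the paper gives no proof of its own, deferring to the explicit free-PROP construction of \cite{fmy} and to \cite[4.3--4.5]{fresse}, and your reflexive-coequalizer/monadicity argument is precisely the Fresse-style proof fleshed out. Two standard facts are used implicitly and deserve to be named: first, passing from ``$\boxv$ and $\boxh$ preserve filtered colimits and reflexive coequalizers in each variable separately'' to the diagonal statement $X \boxv X \cong \colim_j (X_j \boxv X_j)$ uses that these are \emph{sifted} colimits, so the diagonal $J \to J \times J$ is final; second, to conclude cocompleteness of $\prope$ from cocompleteness of $\sigmaec$ and the fact that $T = UF$ preserves reflexive coequalizers, you need to identify $\prope$ with the category of $T$-algebras, which follows from the crude monadicity theorem since $U$ reflects isomorphisms and creates reflexive coequalizers. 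With those two points made explicit, the proof is complete.
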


\subsection{Colored endomorphism PROP}

Before we talk about $\sfP$-algebras, let us first spell out the colored endomorphism PROP construction through which a $\sfP$-algebra is defined. Given objects $X, Y$, and $Z$ in $\cate$, there is a natural map
\begin{equation}
\label{eq:eta}
\eta \colon Z^Y \otimes Y^X \to Z^X,
\end{equation}
which is adjoint to the composition of the maps $Z^Y \otimes Y^X \otimes X \to Z^Y \otimes Y \to Z$.  Here $Y^X \otimes X \to Y$ is the adjoint of the identity map on $Y^X$ and similarly for the right-most map.  The map $\eta$ is associative in the obvious sense.

\begin{definition}
\label{def:endoPROP}
A  $\frakC$-colored \emph{endomorphism PROP} $E_X$ is associated to a $\frakC$-graded object $X = \{X_c\}_{c\in \frakC}$ in $\cate$.  Given $m, n \geq 1$ and colors $c_1, \ldots , c_n$, $d_1, \ldots , d_m$, it has the component
\[
E_X\binom{\underline{d}}{\underline{c}}
= \left(X_{d_1} \otimes \cdots \otimes X_{d_m}\right)^{\left(X_{c_1} \otimes \cdots \otimes X_{c_n}\right)} = X_{\underline{d}}^{X_{\underline{c}}}.
\]
The $\Sigma_{\ud}$ and $\Sigma_{\uc}$ acts as expected, with $\Sigma_{\ud}$ permuting the $m$ factors
$X_{\underline{d}} = X_{d_1} \otimes \cdots \otimes X_{d_m}$ and $\Sigma_{\uc}$ permuting the $n$ factors in the exponent.  The horizontal composition in $E_X$ is given by the naturality of exponentiation.  The vertical composition is induced by the natural map $\eta \colon X_{\underline{d}}^{X_{\underline{c}}} \otimes
X_{\underline{c}}^{X_{\underline{a}}} \to X_{\underline{d}}^{X_{\underline{a}}}$ discussed above \eqref{eq:eta}.

Thinking of the case of pointed sets or modules, $E_X\binom{\underline{d}}{\underline{c}} = \Hom(X_{\underline{c}}, X_{\underline{d}})$, and the horizontal composition is simply tensoring of functions.  The vertical composition is composition of functions with matching colors.
\end{definition}

\begin{definition}
\label{def:coloredPROPalgebra}
For a $\frakC$-colored PROP $\sfP$, a \emph{$\sfP$-algebra} structure on $X$ is a morphism $\lambda \colon \sfP \to E_X$ of $\frakC$-colored PROPs.  In this case, we say that $X$ is a $\sfP$-algebra with structure map $\lambda$.
\end{definition}

As usual one can unpack this definition and, via adjunction, express the structure map as a collection of maps
\[
\lambda \colon \sfP\binom{\underline{d}}{\underline{c}} \otimes X_{\underline{c}} \to X_{\underline{d}}
\]
with $(\underline{d}; \underline{c}) \in \mathcal{P}_l(\frakC) \times \catp_r(\frakC)$ that are associative (with respect to both the horizontal and the vertical compositions) and bi-equivariant.

A \emph{morphism} $f \colon X \to Y$ of $\sfP$-algebras is a collection of maps $f = \{f_c \colon X_c \to Y_c\}_{c\in \frakC}$ such that the diagram
\begin{equation}
\label{algebramap}
\SelectTips{cm}{10}
\xymatrix{
\sfP\binom{\underline{d}}{\underline{c}} \otimes X_{\underline{c}} \ar[r]^-{\lambda_X} \ar[d]_{Id \otimes f_{\underline{c}}} & X_{\underline{d}} \ar[d]^{f_{\underline{d}}} \\
\sfP\binom{\underline{d}}{\underline{c}} \otimes Y_{\underline{c}} \ar[r]^-{\lambda_Y} & Y_{\underline{d}}
}
\end{equation}
commutes for all $m, n \geq 1$ and colors $c_1, \ldots , c_n$ and $d_1, \ldots , d_m$.  Here $f_{\underline{c}} = f_{c_1} \otimes \cdots \otimes f_{c_n}$.

\section{Model structure on colored PROPs}
\label{modelstructure}

The purpose of this section is to prove Theorem \ref{thm:coloredpropmodel}, i.e., to lift the model category structure on $\cate$ to the category $\prope$ of $\frakC$-colored PROPs over $\cate$ (Theorem \ref{props'}).  This is achieved by first lifting the model category structure on $\cate$ to the category $\sigmaec$ of $\frakC$-colored $\Sigma$-bimodules (Proposition \ref{sigmabimods}).  The resulting model category structure on $\sigmaec$ is then lifted to $\prope$ via a standard Lifting Lemma \ref{basiclift}.  A variation defining the model structure on $\sigmaec$ using only a subset of the orbits of profiles is also discussed briefly at the end of the section, since it could be useful for change of colors operations, as considered in \cite{cgmv}.

\begin{definition}
	\label{modeldefs}  Suppose $K$ is a set of morphisms in a model category $\cate$.  Then:
\begin{itemize}
\item	the {\it relative $K$-cell complexes} will denote
	those morphisms which can be written as a transfinite composition of cobase changes of
	morphisms in $K$.  (See \cite[10.5.8]{hir}.)
\item	we say {\it sources in $K$ are small} if for each $f:A \to B$ in $K$, there exists a cardinal
	$\kappa_A$ such that for every regular cardinal $\lambda \geq \kappa$ and
	every $\lambda$-sequence $X:\lambda \to \cate$ the natural map
\[\colim_{\beta < \lambda} \cate(A,X_\beta) \to \cate(A, \colim_{\beta < \lambda} X_\beta)
\]
	is a bijection.  (See \cite[10.4.1]{hir}.)
\item	$\cate$ is a {\it strongly cofibrantly generated} model category if there exists sets of maps
	$I$ and $J$ with sources in both sets small, and a map $p$ is a fibration (resp. acyclic
	fibration) if and only if $p$ has the right lifting property with respect to every
	morphism in $J$ (resp. $I$). (See \cite[11.1.1]{hir}.)
\item given a functor $R:\catd \to \cate$, an {\it $R$-fibration (resp. $R$-weak equivalence)}
	will denote a morphism $p$ in $\catd$ with $R(p)$ a fibration (resp. weak equivalence).  An
	{\it $R$-fibrant replacement} for an object $Y \in \catd$ will indicate an $R$-weak equivalence
	$g:Y \to Z$  with $Z$ an $R$-fibrant object.  The $R$-lifted structure on $\catd$
	will refer to these classes
	together with the $R$-cofibrations, defined as those maps with the left lifting property with respect
	to each $R$-fibration which is also an $R$-weak equivalence.
\item an {\it $R$-path object} (often just called a path object below) in $\cate$ for an object
	$X \in \cate$ will denote the intermediate object
	in a factorization of the diagonal map $X \to Path(X) \to X \times X$
	as an $R$-weak equivalence followed by an $R$-fibration.  (See \cite[7.3.2(3)]{hir}.)
\end{itemize}	
\end{definition}

\begin{remark}
	We emphasize by the adjective `strongly' that our sources are assumed to be small,
	rather than the weaker condition of assuming that our sources are small with respect to
	the class of relative $I$-cell complexes.  Important examples excluded by this stronger
	assumption include almost all topological examples, while simplicial examples,
	chain complexes and so forth satisfy this strengthened assumption.  This assumption
	greatly simplifies the exposition, and our later results on Quillen equivalences (see section
	\ref{sec:Qpairs}) help to justify this restriction.
		
	We will often use a stronger notion of path object, where the first morphism is assumed to
	be an acyclic cofibration, rather than just a weak equivalence.
\end{remark}

\begin{lemma}
    \label{basiclift}
    Suppose $\cate$ is a strongly cofibrantly generated model category and $R:\catd \to \cate$
    has a left adjoint $L$.  Then $\catd$ becomes a cofibrantly generated model category
    (and $(L,R)$ form a strong Quillen pair) under the $R$-lifted model structure provided:
\begin{itemize}
\item       $\catd$ has all small limits and colimits, while $R$ creates/preserves filtered colimits,
\item       there is a functorial $R$-fibrant replacement $Q$ in $\catd$, and
\item       for every $R$-fibrant object in $\catd$, there is a
		path object construction which is preserved by $R$.
\end{itemize}
\end{lemma}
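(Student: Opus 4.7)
The plan is to apply Quillen's path object argument to transfer the cofibrantly generated structure from $\cate$ to $\catd$, using the generating sets $L(I)$ and $L(J)$. First I verify smallness: for any source $A$ of a morphism in $I$ or $J$, the adjunction gives $\catd(L(A),-) \cong \cate(A, R(-))$, and since $R$ preserves filtered colimits and $A$ is small in $\cate$, $L(A)$ is small in $\catd$. The small object argument therefore applies in $\catd$ to produce both functorial factorizations. By adjunction, a map $p$ in $\catd$ has the right lifting property against $L(J)$ (respectively $L(I)$) if and only if $R(p)$ has the right lifting property against $J$ (respectively $I$), which amounts to $p$ being an $R$-fibration (respectively an acyclic $R$-fibration).

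The 2-out-of-3 and retract axioms for $R$-weak equivalences are then inherited from $\cate$, and one half of the lifting axiom is immediate from the adjunction. The only delicate point, as in every transfer theorem of this kind, is to show that each relative $L(J)$-cell complex is in fact an $R$-weak equivalence; once this is known, the remaining factorization and lifting axioms follow by the standard retract argument (factor an acyclic cofibration as a relative $L(J)$-cell followed by an $R$-fibration, observe via 2-out-of-3 that the latter must be an $R$-acyclic fibration, and conclude the original map is a retract of the former). This acyclicity claim is the main obstacle.

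To prove acyclicity, fix a relative $L(J)$-cell complex $i \colon A \to B$. Because $J$ has the left lifting property against every fibration in $\cate$, the adjunction shows that $L(J)$, and hence every relative $L(J)$-cell, has the left lifting property against every $R$-fibration in $\catd$. Let $q_A \colon A \to QA$ denote the functorial $R$-fibrant replacement. Lifting $q_A$ across the $R$-fibration $QA \to \ast$ along $i$ yields $r \colon B \to QA$ with $r \circ i = q_A$. By naturality there is also $Q(i) \colon QA \to QB$ satisfying $Q(i) \circ q_A = q_B \circ i$, so the two maps $Q(i) \circ r$ and $q_B$ from $B$ to $QB$ agree after precomposition with $i$. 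Using the hypothesized factorization of the diagonal $QB \to Path(QB) \to QB \times QB$ into an $R$-weak equivalence followed by an $R$-fibration (preserved by $R$), a further lift against $i$ produces a right homotopy $H \colon B \to Path(QB)$ from $Q(i) \circ r$ to $q_B$ in $\catd$. Applying $R$ gives a right homotopy in $\cate$ between maps with fibrant target $R(QB)$, so $R(Q(i)) \circ R(r)$ is a weak equivalence, being right-homotopic to the weak equivalence $R(q_B)$. Combined with $R(r) \circ R(i) = R(q_A)$, the 2-out-of-6 property (which holds in any model category) forces $R(i)$ itself to be a weak equivalence. Finally, the Quillen pair assertion is immediate from the construction, since $L$ sends $I$ and $J$ into the generating cofibrations and generating acyclic cofibrations of $\catd$ by definition.
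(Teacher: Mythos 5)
Your proposal is correct and follows essentially the same route as the paper: smallness of $L(I)$, $L(J)$ via the adjunction and preservation of filtered colimits, reduction to showing relative $L(J)$-cells are $R$-weak equivalences, and then the Quillen path-object argument (lift the fibrant replacement $q_A$ along the cell $i$ to get $r$ with $r i = q_A$, build a right homotopy between $Q(i)r$ and $q_B$ using the $R$-preserved path object, and conclude by two-out-of-six / saturation in the homotopy category of $\cate$). The paper packages the remaining model-category axioms by citing Hirschhorn's transfer theorem where you invoke the standard retract argument, but the substance is identical.
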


For clarity of presentation, we separate the key portion of the proof, adapted from \cite[Lemma B.2]{sch}.

\begin{sublemma}
	\label{piece}
	Under the last two assumptions of Lemma \ref{basiclift},
	(retracts of) relative $L(J)$-cell complexes are $R$-weak equivalences.
\end{sublemma}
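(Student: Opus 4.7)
The plan is to follow the three-step pattern familiar from such lifting results (cf.\ \cite{sch}): handle first a single pushout of $L(j)$ for $j \in J$, then transfinite compositions of such, then retracts. The heart of the argument is a path object construction in $\catd$ combined with the $(L,R)$ adjunction to translate lifting problems back into $\cate$, where $j$ lives as an acyclic cofibration.

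For a single pushout, fix a pushout square in $\catd$ with $L(j)\colon L(A) \to L(B)$ on the left, $\alpha\colon L(A) \to X$ on top, $\beta\colon L(B) \to Y$ on the bottom, and the induced map $f\colon X \to Y$ on the right. Applying the functorial $R$-fibrant replacement $\eta_X\colon X \to QX$, the adjoint $A \to R(QX)$ of $\eta_X \circ \alpha$ extends along $j$ to a map $B \to R(QX)$, since $j$ is an acyclic cofibration and $R(QX)$ is fibrant in $\cate$. Its adjoint $L(B) \to QX$ is compatible with the pushout and thus induces a unique $\tilde{q}\colon Y \to QX$ with $\tilde{q} \circ f = \eta_X$. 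As $R(\eta_X)$ is a weak equivalence, two-out-of-three reduces the problem to showing $R(\tilde{q})$ is a weak equivalence.

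To prove this, I would construct a right homotopy $H\colon Y \to P(QY)$ between $Q(f) \circ \tilde{q}$ and $\eta_Y$, where $P(QY)$ is the hypothesized path object. On $X$ both maps agree with $\eta_Y \circ f$, so the component of $H$ along $f$ is forced to be $s \circ \eta_Y \circ f$, with $s\colon QY \to P(QY)$ the structure map. On $L(B)$, producing $H$ reduces, via adjunction, to lifting $j$ against the $R$-fibration $P(QY) \to QY \times QY$, which exists because $j$ is an acyclic cofibration in $\cate$ and path objects are preserved by $R$. Gluing via the pushout yields $H$. Applying $R$, one obtains a right homotopy $R(Q(f)) \circ R(\tilde{q}) \sim R(\eta_Y)$ in $\cate$, so the left side is a weak equivalence; combined with the identity $R(\tilde{q}) \circ R(f) = R(\eta_X)$, also a weak equivalence, the map $R(\tilde{q})$ has both a left and a right inverse in the homotopy category of $\cate$, hence is itself a weak equivalence.

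For a general relative $L(J)$-cell complex $f\colon X_0 \to X_\lambda$, construct $\tilde{q}_\beta\colon X_\beta \to QX_0$ by transfinite induction (successor steps by the same lifting argument against the fibrancy of $R(QX_0)$, limit steps using that $R$ preserves filtered colimits), producing $\tilde{q}_\lambda\colon X_\lambda \to QX_0$ with $\tilde{q}_\lambda \circ f = \eta_{X_0}$; a transfinite analogue of the path-object argument then shows $R(\tilde{q}_\lambda)$ is a weak equivalence, and retracts preserve weak equivalences by the model axioms in $\cate$. The main obstacle is verifying compatibility in the right-homotopy step: one must check, using $\beta^{\#} \circ j = R(f) \circ \alpha^{\#}$ (from the pushout) and $Q(f) \circ \eta_X = \eta_Y \circ f$ (from naturality of $Q$), that the candidate data assemble into a commuting square in $\cate$ to which the lifting axiom applies.
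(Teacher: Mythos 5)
Your proof is correct and follows essentially the same route as the paper: the lift $\tilde{q}$ into $QX$ giving a strict one-sided inverse, the lift into the path object $P(QY)$ giving the other one-sided inverse up to right homotopy, and the identification of weak equivalences with isomorphisms in the homotopy category. The only difference is organizational --- you unpack the closure of the left lifting property under cobase change and transfinite composition into an explicit cell-by-cell induction, whereas the paper invokes it once to treat an arbitrary relative $L(J)$-cell complex directly.
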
	

\begin{proof}
	Suppose $j$ is a relative $L(J)$-cell complex.
	By R-fibrancy of $Q(X)$ and the adjunction argument for the RLP against $L(J)$,
	there is a lift $r$ in the following diagram.
\diagramit{
X \ar[r]^-{\eta} \ar[d]_{j} & {Q(X)} \ar[d] \\
Y \ar[r] \ar@{.>}[ur]^{r} & {*}
}
	This implies $R(rj)=R(r)R(j)$ is a weak equivalence in $\cate$ by the assumption of
	$\eta$ an $R$-weak equivalence.
	Since the weak equivalences in $\cate$ are precisely the maps whose image
	in the homotopy category are isomorphisms, it then suffices to verify there is a right inverse
	for $R(j) \approx R(Q(j))$ in the homotopy category of $\cate$.  Our candidate will be $R(r)$.
	
	Define a map $Y \to QY \times QY$ by taking $\eta$ on the first factor and $Q(j) r$ on the
	second.  Then for any path object $P(QY)$ for $QY$ one has the following diagram, and the
	indicated dotted lift
\diagramit{
{X} \ar[d]_{j} \ar[r] & {QX} \ar[r] & {P(QY)} \ar[d] \\
{Y} \ar[rr] \ar@{.>}[urr]^-{H} && {QY \times QY},
}
	where the map $QX \to QY \to P(QY)$ comes from functoriality of $Q$ and
	the construction of the path object $P(QY)$.
	Now apply $R$ to this diagram to yield
\diagramit{
{R(X)} \ar[d]_{R(j)} \ar[r] & {R(P(QY))} \ar[r]^{\approx} & {P(R(QY))} \ar[d] \\
{R(Y)} \ar[r] \ar@{.>}[ur]^-{R(H)}& {R(QY \times QY)} \ar[r]_-{\approx} & {R(QY) \times R(QY)},
}
	which exhibits a right homotopy between $R(Q(j)r)$ and $R(\eta)$. Since $R(QY)$
	is fibrant by assumption, it follows as usual for a
	fibrant target that $R(Q(j)r)$ and $R(\eta)$ are identified in the homotopy category.
	This suffices to imply $R(Q(j))R(r)$ is an isomorphism in the
	homotopy category of $\cate$, since
	 $R(\eta)$ is a weak equivalence by assumption.
\end{proof}

\begin{proof}[Proof of Lemma \ref{basiclift}]
	We can appeal to \cite[11.3.2]{hir}, with the assumption that $R$ preserves filtered colimits
	sufficient to verify that sources in $L(I)$ and $L(J)$ are small, since
	sources in $I$ and $J$ are assumed to be small.
	This implies that $L(I)$ and $L(J)$ permit the small object argument, satisfying condition
	11.3.2(1), while condition 11.3.2(2) is verified by the Sublemma.
\end{proof}

\begin{proposition}
    \label{sigmabimods}
    The category $\sigmaec$ of $\frakC$-colored $\Sigma$-bimodules  carries a
    projective strongly cofibrantly generated model structure,
    where fibrations and weak equivalences are defined entrywise
    (in $\mathcal{E}^{\mathcal{P}_l(\frakC) \times \mathcal{P}_r(\frakC)}$),
    while the entries of any cofibration are cofibrations
    in $\cate$.  In addition, the properties of being
    simplicial, or proper are inherited from $\cate$ in this structure.
\end{proposition}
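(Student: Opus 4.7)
The plan is to exploit the decomposition $\sigmaec \cong \prod_{[\ud],[\uc]} \cate^{\Sigma_{\ud;\uc}}$ from Lemma \ref{decomposition} and build the projective model structure one factor at a time, then take the product. For each pair of orbit types, I would apply the Lifting Lemma \ref{basiclift} to the adjunction
\[
L \colon \prod_{x \in \mathrm{Ob}(\Sigma_{\ud;\uc})} \cate \rightleftarrows \cate^{\Sigma_{\ud;\uc}} \colon R,
\]
where $R$ is evaluation at each object and $L$ is its left adjoint (which sends an object at $x$ to $\Sigma_{\ud;\uc}(x,-) \otimes (-)$). The product category on the left inherits a strongly cofibrantly generated structure from $\cate$ componentwise. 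To feed the Lifting Lemma, I would note: all small limits and colimits in $\cate^{\Sigma_{\ud;\uc}}$ exist and are computed pointwise, so $R$ preserves filtered colimits (indeed all colimits and limits); pointwise fibrant replacement in $\cate$ gives a functorial $R$-fibrant replacement; and for an $R$-fibrant diagram (one that is pointwise fibrant), any pointwise path object in $\cate$, applied objectwise, yields a path object in $\cate^{\Sigma_{\ud;\uc}}$ that $R$ preserves on the nose.

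Having obtained the projective structure on each factor, I would assemble them into a product model structure on $\sigmaec$. The resulting generating (acyclic) cofibrations are the maps $\Sigma_{\ud;\uc}(x,-) \otimes i$ as $i$ ranges over the generating (acyclic) cofibrations of $\cate$ and $(\ud,\uc,x)$ ranges over all choices. Smallness of sources follows from smallness in $\cate$ since $L$ is a left adjoint between locally small categories and the representables are componentwise small coproducts of the unit. Fibrations and weak equivalences are entrywise in $\cate$ by construction.

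The entrywise-cofibration statement is where the groupoid hypothesis is essential. Evaluating a generating cofibration $\Sigma_{\ud;\uc}(x,-) \otimes i$ at any object $y$ gives
\[
\coprod_{\Sigma_{\ud;\uc}(x,y)} i,
\]
a coproduct of copies of the cofibration $i$ of $\cate$ indexed by the (possibly empty) set of isomorphisms $x \to y$; this is a cofibration in $\cate$. Since the class of cofibrations in $\cate$ is closed under coproducts, cobase change, transfinite composition, and retracts, and since every cofibration in $\sigmaec$ is built from the generating cofibrations by these operations performed pointwise (Lemma \ref{colimits} applies in this setting as well), every such cofibration is entrywise a cofibration in $\cate$.

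Finally, simplicial enrichment, left properness, and right properness are inherited automatically: the simplicial tensor/cotensor on $\cate$ extends pointwise to $\sigmaec$, the pushout-product and Quillen SM7 axioms hold entrywise, and pushouts along entrywise cofibrations (respectively pullbacks along entrywise fibrations) preserve entrywise weak equivalences. The main obstacle to watch for is verifying the sublemma hypothesis in \ref{basiclift}, namely that the pointwise path object is genuinely $R$-preserved on all $R$-fibrant objects — but this is immediate from the pointwise construction, so the proof is essentially assembly of standard ingredients once the decomposition \eqref{eq:decomp} is in hand.
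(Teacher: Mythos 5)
Your proposal is correct, but it takes a more explicit route than the paper, whose entire proof is a citation: $\sigmaec$ is a diagram category, so Hirschhorn's \cite[11.6.1]{hir} gives the projective strongly cofibrantly generated structure directly, with \cite[11.6.3]{hir} for the entrywise-cofibration claim, \cite[11.7.3]{hir} for the simplicial condition, and \cite[13.1.14]{hir} for properness. You instead re-derive the content of those references: you decompose via Lemma \ref{decomposition} into factors indexed by connected groupoids, lift each factor over the evaluation adjunction using the paper's own Lifting Lemma \ref{basiclift}, reassemble as a product, and compute that a generating cofibration evaluated at an object is a coproduct of copies of a generating cofibration of $\cate$ indexed by a hom-set. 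This is essentially the strategy the paper itself uses later for the \emph{modified} projective structure (second proof of Corollary \ref{modproj}, which invokes \cite[11.1.10]{hir} for the product assembly), so your argument is consistent with the paper's toolkit; what it buys is a self-contained verification and an explicit description of the generating sets, at the cost of redoing details that Hirschhorn packages up. Two small points to tighten: the appeal to Lemma \ref{colimits} is misplaced (that lemma concerns PROPs; pointwise colimits in a diagram category are automatic), and the simplicial SM7 axiom should be verified in its dual form in terms of fibrations and weak equivalences, since those are entrywise while projective cofibrations are not merely entrywise cofibrations; the dual form reduces SM7 to the corresponding statement in $\cate$ entry by entry.
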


\begin{proof}
Since this is a category of diagrams, Hirschhorn's \cite[11.6.1]{hir}
applies.  For the simplicial condition use \cite[11.7.3]{hir},
for the entries of cofibrations use \cite[11.6.3]{hir}, and for the proper condition
use \cite[13.1.14]{hir} (or the definitions and the claim on cofibrations).
\end{proof}

\begin{remark}
	Keeping in mind our decomposition of the category of bimodules as a product, we point
	out that each piece of the decomposition also carries a similar model structure by the
	same arguments.  As a consequence, a morphism is a cofibration precisely when each
	projection in the decomposition \eqref{eq:decomp} is a cofibration.
\end{remark}

When the classes of fibrations and weak equivalences in a model category are defined by considering all entries in an underlying structure, it has become common to call it a projective model structure.  We propose to refer to modified projective structures (relative to a subset of the entries) when we define fibrations as those maps having only a specified set of entries fibrations, and similarly for weak equivalences.  We will see later, in considering both changes of colors (Corollary \ref{colorQequivs}) and the comparison with colored operads (Proposition \ref{prop:operadQequiv}) that modified structures can be useful.  However, unless otherwise noted, we will focus on projective structures throughout the remainder of this article.  Let $\Sigma$ denote the groupoid $\catp_l(\frakC) \times \catp_r(\frakC)$ defined in Section \ref{subsec:sigmabimodules}.

\begin{corollary}
	\label{modproj}
	Given a subset of the components of $\Sigma$, we have a modified projective structure
	on $\sigmaec$ where fibrations and weak equivalences are defined only by considering entries
	in the chosen components.  Once again, any entry of a cofibration will remain a cofibration
	in $\cate$, and this structure also inherits the properties of being simplicial or proper from $\cate$.
\end{corollary}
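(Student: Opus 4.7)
The plan is to exploit the product decomposition $\sigmaec \cong \prod_{([\ud],[\uc])} \cate^{\Sigma_{\ud;\uc}}$ from Lemma \ref{decomposition}. Splitting according to whether a component lies in the given subset $S$, write $\sigmaec \cong \cata \times \catb$, where $\cata$ is the product over $S$-indexed factors and $\catb$ is the product over the remaining factors. Equip $\cata$ with the product of the factorwise projective model structures mentioned in the remark following Proposition \ref{sigmabimods}, and equip $\catb$ with the codiscrete model structure in which every morphism is both a fibration and a weak equivalence (so that only isomorphisms are cofibrations); the latter is strongly cofibrantly generated by the empty sets of generating and generating trivial cofibrations. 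The product of these two structures on $\cata \times \catb \cong \sigmaec$ is then a strongly cofibrantly generated model structure whose fibrations and weak equivalences are exactly the maps with $S$-entries fibrations and weak equivalences in $\cate$, as required.

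Alternatively, and perhaps more in keeping with the paper's style, one can apply the Lifting Lemma \ref{basiclift} to the projection $R \colon \sigmaec \to \cata$, whose left adjoint extends a family by the initial object on $\catb$-factors. The required hypotheses are straightforward: all small limits and colimits exist factorwise, $R$ preserves filtered colimits as a projection, and functorial $R$-fibrant replacements and $R$-path objects over $R$-fibrant objects are constructed by applying the corresponding machinery from Proposition \ref{sigmabimods} on the $\cata$-factors and making arbitrary choices (for instance the identity) on the $\catb$-factors, which $R$ does not see.

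The remaining assertions transfer factor by factor. Entries of cofibrations in $S$-factors are cofibrations in $\cate$ by Proposition \ref{sigmabimods}, while entries in non-$S$-factors are isomorphisms, so the claim about entries of cofibrations follows. The simplicial and proper axioms are both verified entrywise, as in the proof of Proposition \ref{sigmabimods}, using the references to \cite{hir} cited there; restricting the classes of fibrations and weak equivalences to a subfamily of entries cannot break axioms that were already established entry-by-entry. I do not anticipate a serious obstacle: the only subtle point is checking that the codiscrete structure on $\catb$ genuinely is a strongly cofibrantly generated model structure, which becomes obvious once one observes that lifting against the empty set characterizes every map.
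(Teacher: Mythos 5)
Your main argument is essentially the paper's own second ("alternative") proof: decompose $\sigmaec$ via Lemma \ref{decomposition}, give the unchosen factors the trivial model structure in which every map is an acyclic fibration and cofibrations are isomorphisms, give the chosen factors the projective structure, and take the product as in \cite[11.1.10]{hir}; your use of empty generating sets is an acceptable cosmetic variant of the paper's choice of the identity on the initial object. The proposal is correct, and your additional Lifting Lemma route via the projection is a workable but unnecessary embellishment.
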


\begin{proof}
	One approach follows from a slight modification of Hirschhorn's \cite[11.6.1]{hir}, where only
	the generating cofibrations from the chosen components are used to define $I$ and $J$.
	Hence, the class of cofibrations is contained in those for the projective structure, which
	suffices to imply the entries are cofibrations in $\cate$ as above.
	
	Alternatively, one exploits the decomposition \eqref{eq:decomp} and considers each
	factor as one of only two types.  One observes that the components not chosen have
	strongly cofibrantly generated model structures where all maps are acyclic fibrations,
	the cofibrations are precisely the isomorphisms, and the sets $I$ and $J$ both
	consist of only the identity map on the initial object, by \cite[11.3.1]{hir}.  Now apply the
	arguments in the previous proof to give the components chosen projective
	structures.  The required modified projective structure is then the product of these
	structures, as in \cite[11.1.10]{hir}.  The claim about cofibrations then follows from the
	previous remark.
\end{proof}

Let $U \colon \prope \to \sigmaec$ denote the underlying $\frakC$-colored $\Sigma$-bimodule functor.  In order to lift the above model category structure on $\sigmaec$ to $\prope$ using the Lifting Lemma \ref{basiclift}, we need path objects for $U$-fibrant $\frakC$-colored PROPs.  One way to obtain a path object construction for $U$-fibrant $\frakC$-colored PROPs is by using a \emph{cocommutative interval} in $\cate$, which we now discuss.

\begin{definition}
\label{def:cocom}
We say that $\cate$ admits a \emph{cocommutative interval} (called a \emph{cocommutative coalgebra interval} in \cite{bm07}) if the fold map $\nabla \colon I \sqcup I \to I$ can be factored as
\begin{equation}
\label{coassinterval}
I \sqcup I \xrightarrow{\alpha} J \xrightarrow{\beta} I,
\end{equation}
in which $\alpha$ is a cofibration and $\beta$ is a weak equivalence, $J = (J, \Delta)$ is a coassociative cocommutative comonoid, and $\alpha$ and $\beta$ are both maps of comonoids.
\end{definition}

For example, the categories of (pointed) simplicial sets and of symmetric spectra \cite{hss} both admit cocommutative intervals \cite[Section 2]{bm07}.

\begin{definition}
\label{def:pathobj}
We say that $\cate$ admits \emph{functorial path data} if there exist a symmetric monoidal functor $Path$ on $\cate$ and monoidal natural transformations $s \colon Id \to Path$, $d_0, d_1 \colon Path \to Id$ such that $X \xrightarrow{s} Path(X) \xrightarrow{(d_0,d_1)} X \times X$ is a path object for $X$ whenever $X$ is fibrant.
\end{definition}

This definition is adapted from Fresse \cite[Fact 5.3]{fresse}.  Fresse showed that functorial path data exists when $\cate$ is either the category of chain complexes over a characteristic $0$ ring or of simplicial modules, among others.  We would like to consider other examples by using the following technical result.

\begin{lemma}
	\label{onetwo}
	If $\cate$ admits a cocommutative interval and $I$ is cofibrant,
	then $\cate$ admits functorial path data.
\end{lemma}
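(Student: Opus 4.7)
The natural candidate is $Path(X) := X^J$, the internal hom out of the cocommutative interval $J$. The plan has three stages: endow $Path$ with the structure of a symmetric monoidal functor using the comonoid data on $J$, define the three monoidal natural transformations from the comonoid maps $\alpha$ and $\beta$, and verify the path object axioms when $X$ is fibrant.

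For the first stage, the comultiplication $\Delta \colon J \to J \otimes J$ induces the lax structure constraint
\[
X^J \otimes Y^J \to (X \otimes Y)^{J \otimes J} \xrightarrow{(X \otimes Y)^\Delta} (X \otimes Y)^J,
\]
while the counit of the comonoid $J$ gives the unit constraint $I \cong I^I \to I^J$. Since $\beta \colon J \to I$ is assumed a comonoid map into the trivial comonoid $I$ (whose counit is the identity), the counit of $J$ must in fact be $\beta$ itself. Coassociativity, cocommutativity, and counitality of $(J,\Delta,\beta)$ then translate directly into the axioms making $Path$ a lax symmetric monoidal functor.

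For the second stage, the three comonoid morphisms $\beta \colon J \to I$ and $i_k := \alpha \circ \iota_k \colon I \to I \sqcup I \to J$ for $k = 0, 1$ (with $\iota_k$ the canonical comonoid coproduct inclusions) induce the required transformations $s = X^\beta \colon \mathrm{Id} \to Path$ and $d_k = X^{i_k} \colon Path \to \mathrm{Id}$. Monoidality of all three transformations is automatic, since the inducing maps are comonoid morphisms. The relation $\beta \circ i_k = \beta \circ \alpha \circ \iota_k = \nabla \circ \iota_k = \mathrm{Id}_I$ yields $d_k \circ s = \mathrm{Id}_X$, so $(d_0, d_1) \circ s$ is the diagonal $X \to X \times X$, as needed.

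For the final stage, cofibrancy of $I$ together with $\alpha$ being a cofibration make $I \sqcup I$, and therefore $J$, cofibrant, so $\beta \colon J \to I$ is a weak equivalence of cofibrant objects. When $X$ is fibrant, the pushout-product axiom (in its internal-hom form) then gives that $s = X^\beta$ is a weak equivalence, and applied to the cofibration $\alpha$ it gives that $(d_0,d_1) = X^\alpha \colon X^J \to X^{I \sqcup I} \cong X \times X$ is a fibration. The one point requiring a little care, rather than a genuine obstacle, is the bookkeeping around the comonoid structures: confirming that $\beta$ coincides with the counit of $J$ and that the coproduct inclusions $\iota_k$ are comonoid maps, since the monoidality of $s$, $d_0$, and $d_1$ rests on these identifications.
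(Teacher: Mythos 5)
Your proposal is correct and follows essentially the same route as the paper: $Path(X)=X^J$ with the lax symmetric monoidal structure from $\Delta$, the transformations induced by $\beta$ and $\alpha$, and the pushout-product axiom (using cofibrancy of $I$, hence of $I\sqcup I$ and $J$) to verify the path-object factorization for fibrant $X$. Your extra bookkeeping — identifying $\beta$ with the counit of $J$ and checking $\beta\circ\alpha\circ\iota_k=\mathrm{Id}_I$ — is a fuller version of details the paper leaves implicit.
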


\begin{proof}
	Let $J$ be a cocommutative interval \eqref{coassinterval} in $\cate$.  Then define
	$Path(X)=X^J$ with the required transformations coming from the diagram
\[X \cong X^I \xrightarrow{\beta^*} X^J \xrightarrow{\alpha^*} X^{I \sqcup I} \cong X \times X
\]
	(so $d_0$ and $d_1$ are the projections of $\alpha^*$).  Notice the transformations are
	monoidal since $\alpha$ and $\beta$ are assumed to be comonoidal, while $X^J$ is
	symmetric monoidal via
\[ X^J \otimes Y^J \to \left(X \otimes Y \right)^{J \otimes J} \xrightarrow{\Delta^*} (X \otimes Y)^J
\]	
	since $J$ is a coassociative, cocommutative comonoid.	
	
	When $X$ is fibrant, by the pushout-product axiom
	it follows from $\alpha$ a cofibration that $\alpha^*$ is a fibration.  Similarly, from $\beta$
	a weak equivalence between cofibrant objects it follows that $\beta^*$ is a weak
	equivalence.  Hence, $X^J$ serves as a path object when $X$ is fibrant.
\end{proof}

Next we want to lift the model category structure on $\sigmaec$ (Proposition \ref{sigmabimods}) to
$\prope$ using the Lifting Lemma \ref{basiclift}.
As mentioned above, the forgetful functor $U \colon \prope \to \sigmaec$ has a left adjoint, the
\emph{free $\frakC$-colored PROP} functor $F \colon \sigmaec \to \prope$.  The existence of this left adjoint under set-theoretic assumptions on $\cate$ can be determined using Freyd's adjoint functor theorem.  However, an explicit description of this functor in the $1$-colored case can be found in \cite[Proposition 57]{markl06}, \cite[1.2]{mv}, or \cite[Appendix A]{fresse}.
The functor $F$ is defined as a colimit over a certain groupoid of directed graphs.  A straightforward extension of this construction, in which the edges of the directed graphs are $\frakC$-colored, works for the $\frakC$-colored case \cite[2.5-2.8]{fmy}.

\begin{theorem}
\label{props'}
Let $\cate$ be a strongly cofibrantly generated symmetric monoidal model category with:
\begin{enumerate}
\item
a symmetric monoidal fibrant replacement functor, and
\item
functorial path data (e.g. with cofibrant unit and admitting a cocommutative interval by Lemma \ref{onetwo}).
\end{enumerate}
Then the category $\prope$ of $\frakC$-colored PROPs over $\cate$ is a strongly cofibrantly generated model category with fibrations and weak equivalences defined entrywise.
\end{theorem}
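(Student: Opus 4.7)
The plan is to invoke the Lifting Lemma \ref{basiclift} applied to the adjoint pair $F \dashv U$, where $U \colon \prope \to \sigmaec$ is the forgetful functor and $F$ is the free $\frakC$-colored PROP functor. By Proposition \ref{sigmabimods}, the target category $\sigmaec$ already carries a strongly cofibrantly generated projective model structure, so it remains to verify the three bulleted hypotheses of Lemma \ref{basiclift}.

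For the colimit/limit condition, Lemma \ref{colimits} gives all small limits and colimits in $\prope$, together with the statement that filtered colimits are computed in the underlying $\sigmaec$, which immediately gives that $U$ creates and preserves filtered colimits. This step is essentially bookkeeping.

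The remaining two conditions are the heart of the matter, and both reduce to the slogan \emph{a symmetric monoidal functor on $\cate$ applied entrywise sends $\frakC$-colored PROPs to $\frakC$-colored PROPs}. To make this precise, observe that the structure maps of a $\frakC$-colored PROP $\sfP$ (the vertical composition \eqref{verticalcomp} and the horizontal composition \eqref{horizontalcomposition2}) together with the $\Sigma$-equivariance and interchange constraints are all built from finite tensor products in $\cate$; therefore any symmetric monoidal functor $\Phi$ on $\cate$ yields a $\frakC$-colored PROP $\Phi(\sfP)$ with components $\Phi(\sfP)\binom{\ud}{\uc} = \Phi\!\left(\sfP\binom{\ud}{\uc}\right)$, and a monoidal natural transformation $\sfP \to \Phi(\sfP)$ is automatically a map of PROPs. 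Applying this to the symmetric monoidal fibrant replacement functor hypothesized in (1) gives a functorial $U$-fibrant replacement in $\prope$: every entry becomes fibrant, and the transformation $\mathrm{Id} \to Q$ is entrywise a weak equivalence, so $Q$ is a $U$-fibrant replacement in the $U$-lifted sense. Applying the same principle to the functorial path data $X \mapsto Path(X)$ of hypothesis (2) (existence of which, in the presence of a cofibrant unit and cocommutative interval, is Lemma \ref{onetwo}), the assignment $\sfP \mapsto Path(\sfP)$ defined entrywise is a $\frakC$-colored PROP, since $Path$ is symmetric monoidal, and the natural transformations $s$ and $(d_0,d_1)$ are maps of PROPs. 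For $\sfP$ with every entry fibrant, $Path(\sfP)$ factors the diagonal entrywise as a weak equivalence followed by a fibration, so it is a $U$-path object; and by construction $U(Path(\sfP)) = Path(U\sfP)$, so $U$ preserves this path object.

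With all three hypotheses verified, Lemma \ref{basiclift} produces the $U$-lifted strongly cofibrantly generated model structure on $\prope$ whose fibrations and weak equivalences are exactly those maps which are fibrations or weak equivalences entrywise, i.e.\ in $\sigmaec$, which by definition means entrywise in $\cate$. The principal obstacle I expect is the entrywise verification that $Q(\sfP)$ and $Path(\sfP)$ are genuinely PROPs and not merely $\Sigma$-bimodules; this is where one must use that the fibrant replacement and path functors are \emph{symmetric monoidal} (with monoidal structure transformations) rather than merely functorial, so that they commute with the horizontal composition \eqref{horizontalcomposition2} and preserve the interchange rule \eqref{interchangerule}. Once this is in hand, the rest is a direct application of Lemma \ref{basiclift}.
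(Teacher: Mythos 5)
Your proposal is correct and follows the paper's own argument exactly: apply the Lifting Lemma \ref{basiclift} to the free--forgetful adjunction $(F,U)$ over the projective structure of Proposition \ref{sigmabimods}, use Lemma \ref{colimits} for the colimit condition, and obtain the $U$-fibrant replacement and $U$-path objects by applying the symmetric monoidal fibrant replacement functor and the functorial path data entrywise, noting that the symmetric monoidal hypothesis is what guarantees the results remain $\frakC$-colored PROPs. The paper's proof is just a terser version of the same reasoning.
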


\begin{proof}
	We would like to apply Lemma \ref{basiclift} to the free-forgetful adjoint pair $(F,U)$ above.
	Recall the model structure on $\sigmaec$ comes from Proposition \ref{sigmabimods}
	and Lemma \ref{colimits} deals with the first condition.  For the second and third conditions,
	simply apply the assumed symmetric monoidal fibrant replacement functor and functorial
	path data entrywise.  The result remains
	a $\frakC$-colored PROP by the symmetric monoidal assumption, and gives a $U$-fibrant replacement
	or $U$-path object for $U$-fibrant objects by the entrywise definitions in $\sigmaec$.
\end{proof}

In fact, the same proof applies to our other categories of monoids as well, since the constructions are all symmetric monoidal and the left adjoint is the free monoid functor in both cases.

\begin{theorem}\label{thm:vhprops}
	Under the assumptions of Theorem \ref{props'}, the category of hPROPs (Definition \ref{hpropdef}) and the category
	of vPROPs are similarly strongly cofibrantly generated model categories with fibrations
	and weak equivalences defined entrywise.
\end{theorem}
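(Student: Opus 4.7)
The plan is to imitate the proof of Theorem \ref{props'} verbatim, replacing the free-forgetful adjunction $(F,U)$ with the analogous adjunctions for $\boxv$-monoids and $\boxh$-monoids in $\sigmaec$. That is, I would apply the Lifting Lemma \ref{basiclift} to the adjunctions
\[
F^v \colon \sigmaec \rightleftarrows \vprope \colon U^v, \qquad F^h \colon \sigmaec \rightleftarrows \hprope \colon U^h,
\]
where $F^v$ and $F^h$ are the respective free monoid functors with respect to $\boxv$ and $\boxh$. The source model structure on $\sigmaec$ is supplied by Proposition \ref{sigmabimods}.

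First I would check the three hypotheses of Lemma \ref{basiclift}. For completeness and cocompleteness plus preservation of filtered colimits by $U^v$ and $U^h$, one argues exactly as in Lemma \ref{colimits}: the functors $\boxv$ and $\boxh$ preserve reflexive coequalizers and filtered colimits in each variable (inspecting the formulas \eqref{boxvdc} and \eqref{boxhsigmaobject}, and using that $\otimes$, $\otimes_{\Sigma_{\ub}}$, and $\boxdot = K\iota$ all commute with such colimits), so the standard monadic argument produces limits and filtered/reflexive colimits in the monoid categories created by the forgetful functors. For a functorial $U^{v/h}$-fibrant replacement, apply the assumed symmetric monoidal fibrant replacement functor of $\cate$ entrywise; the symmetric monoidal structure of this functor gives compatibility maps that promote the result to a $\boxv$-monoid (respectively $\boxh$-monoid), because the multiplication on a monoid is transported along the symmetric monoidal structure. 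For path objects on $U^{v/h}$-fibrant objects, apply the functorial path data from Definition \ref{def:pathobj} entrywise; again the symmetric monoidal structure of $Path$ together with monoidal transformations $s$, $d_0$, $d_1$ ensures $Path(\sfP)$ inherits a compatible $\boxv$- or $\boxh$-monoid structure and that the factorization $\sfP \to Path(\sfP) \to \sfP \times \sfP$ is a map of monoids, so $U^{v/h}$ sends it to the entrywise path object in $\sigmaec$.

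The only step that requires genuine care is the verification that the symmetric monoidal fibrant replacement and path data actually upgrade to endofunctors of $\vprope$ and $\hprope$ and not merely of $\sigmaec$. This is precisely what the symmetric monoidal hypothesis on these functors buys: given a symmetric monoidal functor $T$ on $\cate$, the natural map $T(X) \otimes T(Y) \to T(X \otimes Y)$ propagates through the constructions defining $\otimes_{\Sigma_{\ub}}$ and $\boxdot$ (using that $T$ commutes with the colimits and left Kan extensions involved, up to the comparison map, and that on fibrant objects the relevant comparison maps are isomorphisms because one is only using $T$ as fibrant replacement/path object on already fibrant entries). Hence a $\boxv$- or $\boxh$-multiplication on $\sfP$ induces one on $T(\sfP)$, and the unit and path-object structure maps automatically become monoid maps.

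With these three hypotheses in hand, Lemma \ref{basiclift} produces strongly cofibrantly generated model structures on $\vprope$ and $\hprope$ whose fibrations and weak equivalences are exactly the maps whose underlying morphism in $\sigmaec$ is a fibration or weak equivalence; since fibrations and weak equivalences in $\sigmaec$ are defined entrywise by Proposition \ref{sigmabimods}, the same is true of fibrations and weak equivalences in $\vprope$ and $\hprope$, completing the proof.
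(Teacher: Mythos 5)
Your proposal is correct and follows essentially the same route as the paper, which disposes of this theorem in one sentence by observing that the proof of Theorem \ref{props'} applies verbatim: the free-forgetful adjunctions for $\boxv$- and $\boxh$-monoids satisfy the hypotheses of Lemma \ref{basiclift} because the constructions are symmetric monoidal and the fibrant replacement and path data are applied entrywise. (One minor remark: you do not need the comparison maps for $T$ against colimits to be isomorphisms --- the lax symmetric monoidal structure already supplies natural maps $T\sfP \boxv T\sfP \to T(\sfP \boxv \sfP) \to T\sfP$ in the correct direction, which is all the monoid structure requires.)
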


\begin{remark}\label{rmk:subset}
	Rather than working with all entries, in Theorems \ref{props'} and \ref{thm:vhprops} one could instead choose a subset of the orbits of pairs of
	$\frakC$-profiles and define fibrations and
	weak equivalences by considering only those entries.  For example, one could use the
	`intersection' (see \cite[Definition 8.5 and Proposition 8.7]{ij})
	of the model structures given by lifting over each chosen evaluation functor and its left adjoint.
	Since there is such a modified projective structure on $\sigmaec$ (Corollary \ref{modproj}),
	and all other structures are lifted from there, such an approach is equally valid for $\prope$, vPROPs, and hPROPs.  This is particularly useful in applications dealing with change of colors,
	as discussed near the end of Section \ref{sec:Qpairs}, and a comparison between colored 	
	operads and colored PROPs (Proposition \ref{prop:operadQequiv}).
\end{remark}

\section{Homotopy invariance of homotopy algebras}
\label{homotopyinvariance}

The purpose of this section is to prove Theorem \ref{thm:hinv}, which says that, under some reasonable conditions, an algebra structure over a cofibrant colored PROP is a homotopy invariant.  The proof is adapted from \cite[Theorem 3.5]{bm03}. Throughout this short section, $f:X \to Y$ will denote a morphism in $\prod_{c\in \frakC} \cate$, and $\sfP \in \prope$ a cofibrant $\frakC$-colored PROP.

\begin{definition}
	 First, we define the $\frakC$-colored $\Sigma$-bimodule $E_{X,Y}$ as a mixed endomorphism construction, having components
\[
E_{X,Y}\binom{\underline{d}}{\underline{c}}
= (Y_{d_1} \otimes \cdots \otimes Y_{d_m})^{X_{c_1} \otimes \cdots \otimes X_{c_n}}
= Y_{\underline{d}}^{X_{\underline{c}}}
\]
for $(\underline{d}; \underline{c}) \in \mathcal{P}_l(\frakC) \times \mathcal{P}_r(\frakC)$.

Now define the relative endomorphism construction $E_f$ via the pullback square in $\sigmaec$
(recall pullbacks are defined entrywise):
\begin{equation}
\label{eq:pullbacksq}
\SelectTips{cm}{10}
\xymatrix{
E_f
\ar[r]^-{\overline{f}^*} \ar[d]_{\overline{f}_*} & E_X
\ar[d]^{f_*} \\
E_Y
\ar[r]^-{f^*} & E_{X,Y}
}
\end{equation}
The map $f^*$ is given on entries by
\[
f^* = Y_{\underline{d}}^{f_{\underline{c}}} \colon E_Y\binom{\underline{d}}{\underline{c}} = Y_{\underline{d}}^{Y_{\underline{c}}} \to Y_{\underline{d}}^{X_{\underline{c}}} = E_{X,Y}\binom{\underline{d}}{\underline{c}}.
\]
The map $f_*$ is given on entries by
\[
f_* = f_{\underline{d}}^{X_{\underline{c}}} \colon E_X\binom{\underline{d}}{\underline{c}} = X_{\underline{d}}^{X_{\underline{c}}} \to Y_{\underline{d}}^{X_{\underline{c}}} = E_{X,Y}\binom{\underline{d}}{\underline{c}}.
\]
\end{definition}

\begin{lemma}
	\label{splitsquare}
	The relative endomorphism construction $E_f \in \prope$, and both morphisms
	$\overline {f}^*$ and $\overline {f}_*$ lie in $\prope$.  Furthermore, $f:X \to Y$ is a
	morphism of $\sfP$-algebras if and only if the $\sfP$-algebra structures on $X$ and $Y$ both
	descend from the same morphism $\sfP \to E_f$ in $\prope$.
\end{lemma}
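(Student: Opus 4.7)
The plan is to verify the three claims by repeatedly appealing to the universal property of the pullback in $\sigmaec$. Since limits in $\sigmaec$ are computed entrywise (Lemma \ref{colimits}), each component $E_f\binom{\ud}{\uc}$ is the pullback in $\cate$ of $X_{\ud}^{X_{\uc}} \xrightarrow{f_*} Y_{\ud}^{X_{\uc}} \xleftarrow{f^*} Y_{\ud}^{Y_{\uc}}$, which heuristically represents pairs $(\alpha,\beta)$ of maps forming a commutative square with $f_{\uc}$ on the left and $f_{\ud}$ on the right. The first task is to equip this $\Sigma$-bimodule with vertical and horizontal compositions.

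For the vertical composition in the form \eqref{verticalcomp}, I consider the two composites $E_f\binom{\ud}{\ub} \otimes E_f\binom{\ub}{\uc} \to E_{X,Y}\binom{\ud}{\uc}$ obtained by composing in $E_X$ or $E_Y$ and then hitting $E_{X,Y}$ via $f_*$ or $f^*$ respectively; these agree because composing two commutative squares along a common middle level yields a commutative square, so they factor uniquely through the pullback, giving the vertical composition on $E_f$. The horizontal composition \eqref{horizontalcomposition2} is built analogously, the key identity being $f_{\ud_1,\ud_2} = f_{\ud_1} \otimes f_{\ud_2}$ at the level of $\cate$. Bi-equivariance \eqref{horizontalequivariance}, associativity of each composition, and the interchange rule \eqref{interchangerule} then follow from the corresponding properties of $E_X$ and $E_Y$ via uniqueness in the pullback. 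By construction both projections $\overline{f}^*$ and $\overline{f}_*$ commute with the resulting PROP operations, so they are morphisms in $\prope$.

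For the final claim, I will unpack \eqref{algebramap} via the $\Hom$-$\otimes$ adjunction: given $\sfP$-algebra structures $\lambda_X \colon \sfP \to E_X$ and $\lambda_Y \colon \sfP \to E_Y$, the condition that $f$ is a morphism of $\sfP$-algebras is precisely $f_* \circ \lambda_X = f^* \circ \lambda_Y$ as maps $\sfP \to E_{X,Y}$ in $\sigmaec$. The pullback's universal property then produces a unique $\lambda \colon \sfP \to E_f$ in $\sigmaec$ with $\overline{f}^* \lambda = \lambda_X$ and $\overline{f}_* \lambda = \lambda_Y$. To see that $\lambda$ is automatically a PROP map, I reapply the uniqueness argument: both $\overline{f}^* \circ \lambda \circ \mu_\sfP$ and $\overline{f}^* \circ \mu_{E_f} \circ (\lambda \boxh \lambda)$ reduce to $\mu_{E_X} \circ (\lambda_X \boxh \lambda_X)$ (using that $\lambda_X$ and $\overline{f}^*$ are PROP maps), and similarly after composing with $\overline{f}_*$, so uniqueness in the pullback forces $\lambda$ to intertwine the horizontal compositions; the vertical case is identical. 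The main subtlety throughout is that $E_{X,Y}$ carries no natural PROP structure, so one cannot simply invoke a general principle that pullbacks of PROPs are PROPs; rather, every structural piece on $E_f$ must be built by hand via the universal property, relying on the specific compatibility of $f_*$ and $f^*$ with the horizontal and vertical compositions in $\cate$.
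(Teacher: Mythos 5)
Your proposal is correct and follows essentially the same route as the paper: the horizontal and vertical compositions on $E_f$ are induced entrywise via the universal property of the pullback (using that $f_*$ and $f^*$ intertwine the compositions through the mixed object $E_{X,Y}$), $\overline{f}^*$ and $\overline{f}_*$ are then PROP maps by construction, and the ``strengthened'' universal property --- a bimodule map $\sfP \to E_f$ is a PROP map iff both projections are, proved by uniqueness of induced maps into the pullback --- yields the algebra-morphism characterization. Your closing remark that $E_{X,Y}$ carries no PROP structure, so the pullback cannot simply be taken in $\prope$, is precisely the point the paper also emphasizes.
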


\begin{proof}
	To obtain the horizontal composition for $E_f$, consider the diagram
\[
\SelectTips{cm}{10}
\xymatrix{
E_f\binom{\underline{d}}{\underline{c}} \otimes E_f\binom{\underline{b}}{\underline{a}} \ar[r]^-{\overline{f}^* \otimes \overline{f}^*} \ar[d]_{\overline{f}_* \otimes \overline{f}_*} & E_X\binom{\underline{d}}{\underline{c}} \otimes E_X\binom{\underline{b}}{\underline{a}} \ar[rr]^-{\text{horizontal comp}} & & E_X\binom{\underline{d},\underline{b}}{\underline{c},\underline{a}} \ar[d]^-{f_*}\\
E_Y\binom{\underline{d}}{\underline{c}} \otimes E_Y\binom{\underline{b}}{\underline{a}} \ar[rr]^-{\text{horizontal comp}} & & E_Y\binom{\underline{d},\underline{b}}{\underline{c},\underline{a}} \ar[r]^-{f^*} & E_{X,Y}\binom{\underline{d},\underline{b}}{\underline{c},\underline{a}}.
}
\]
This diagram is commutative, so by the universal property of pullbacks there is a unique induced map
\[
\otimes \colon E_f\binom{\underline{d}}{\underline{c}} \otimes E_f\binom{\underline{b}}{\underline{a}} \to E_f\binom{\underline{d},\underline{b}}{\underline{c},\underline{a}},
\]
which is the horizontal composition in $E_f$.  The vertical composition in $E_f$ is defined similarly using the vertical compositions in $E_X$ and $E_Y$ and the universal property of pullbacks.  As a consequence, the morphisms $\overline {f}^*$ and $\overline {f}_*$ also lie in $\prope$ by inspection.

In fact, $E_f$ enjoys a stronger than usual universal property, saying it is as close to a pullback in $\prope$ as possible, given that $E_{X,Y}$ is only in $\sigmaec$.  In more detail, suppose
$\theta:\sfP \to E_f$ is a morphism of bimodules equivalent by the universal property of pullbacks to a
pair of maps in $\sigmaec$, $\theta_X:\sfP \to E_X$ and $\theta_Y:\sfP \to E_Y$ which agree when composed into $E_{X,Y}$.
The strengthened pullback condition is that $\theta$ underlies a morphism in $\prope$ precisely when
both $\theta_X$ and $\theta_Y$ underly morphisms in $\prope$.  One direction of this implication follows from $\overline {f}^*$ and $\overline {f}_*$ morphisms in $\prope$, so now suppose both $\theta_X$ and $\theta_Y$ are morphisms in $\prope$ and combine commutativity of the diagram above with the commutative diagram for $\theta_X$ (and similarly for $\theta_Y$)
\diagramit{
\sfP\binom{\underline{d}}{\underline{c}} \otimes \sfP\binom{\underline{b}}{\underline{a}}  \ar[d] \ar[r]
  & E_X\binom{\underline{d}}{\underline{c}} \otimes E_X\binom{\underline{b}}{\underline{a}} \ar[d] \\
\sfP\binom{\underline{d},\underline{b}}{\underline{c},\underline{a}} \ar[r] &
	E_X\binom{\underline{d},\underline{b}}{\underline{c},\underline{a}}    .
}
Uniqueness of the induced map $\sfP\binom{\underline{d}}{\underline{c}} \otimes \sfP\binom{\underline{b}}{\underline{a}}  \to E_f\binom{\underline{d},\underline{b}}{\underline{c},\underline{a}}$ then suffices to show $\theta$ is compatible with horizontal composition, and a similar argument verifies compatibility with vertical composition.

The second claim follows from the strengthened universal property above, since an inspection of the definition shows $f:X \to Y$ is a morphism of $\sfP$-algebras precisely when
\[
\SelectTips{cm}{10}
\xymatrix{
\sfP
\ar[r] \ar[d] & E_X
\ar[d]^{f_*} \\
E_Y
\ar[r]^-{f^*} & E_{X,Y}
}
\]
commutes in $\sigmaec$.
\end{proof}

\begin{proposition}
	\label{htpyalg}
	Suppose the morphism $\overline {f}^*$ (respectively, $\overline {f}_*$)
	has the right lifting property with respect to the initial morphism into $\sfP$ (in $\prope$).	 Then any $\sfP$-algebra structure on $X$ (respectively, on $Y$) extends to make $f$ a
	morphism of $\sfP$-algebras.
\end{proposition}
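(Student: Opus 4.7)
The plan is to combine the strengthened universal property of $E_f$ established in Lemma \ref{splitsquare} with the hypothesized right lifting property, so that the whole argument reduces to producing a single lift in $\prope$.

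First I would record the given data. A $\sfP$-algebra structure on $X$ is, by Definition \ref{def:coloredPROPalgebra}, the same as a morphism $\lambda_X \colon \sfP \to E_X$ in $\prope$. Composing $\lambda_X$ with the unique map from the initial $\frakC$-colored PROP $\emptyset \to \sfP$ also produces a map $\emptyset \to E_f$, since $E_f$ admits a unique morphism from the initial object. Thus we have the commutative square in $\prope$
\[
\SelectTips{cm}{10}
\xymatrix{
\emptyset \ar[r] \ar[d] & E_f \ar[d]^{\overline{f}^*} \\
\sfP \ar[r]_-{\lambda_X} & E_X
}
\]
whose left vertical map is precisely the initial morphism into $\sfP$.

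Next I would invoke the hypothesis that $\overline{f}^*$ has the right lifting property with respect to this initial morphism. This produces a morphism $\lambda \colon \sfP \to E_f$ in $\prope$ with $\overline{f}^* \circ \lambda = \lambda_X$. Setting $\lambda_Y = \overline{f}_* \circ \lambda$ gives a morphism $\sfP \to E_Y$ in $\prope$, hence a $\sfP$-algebra structure on $Y$. By the strengthened pullback property of $E_f$ recorded in Lemma \ref{splitsquare}, the factorizations $\lambda_X = \overline{f}^* \lambda$ and $\lambda_Y = \overline{f}_* \lambda$ through a single morphism $\lambda$ in $\prope$ are exactly the condition that $f$ is a morphism of $\sfP$-algebras between these two structures. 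This proves the first assertion; the second is handled symmetrically, starting from a $\sfP$-algebra structure $\lambda_Y \colon \sfP \to E_Y$ and applying the RLP hypothesis to $\overline{f}_*$.

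I do not anticipate a serious obstacle. The entire content is already packaged into Lemma \ref{splitsquare}: once one recognizes that a compatible pair $(\lambda_X, \lambda_Y)$ corresponds to a morphism into $E_f$, the proposition reduces to a one-step lifting argument in $\prope$. The only subtlety worth noting explicitly is that the lift must be taken in $\prope$ (not merely in $\sigmaec$), which is why the hypothesis is phrased in terms of the initial morphism $\emptyset \to \sfP$ in $\prope$; the strengthened universal property is precisely what guarantees that the resulting map $\lambda$ promotes $\lambda_Y$ to a morphism of colored PROPs rather than just of $\Sigma$-bimodules.
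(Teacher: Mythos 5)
Your proposal is correct and follows essentially the same route as the paper: lift the structure map $\lambda_X \colon \sfP \to E_X$ through $\overline{f}^*$ using the hypothesized right lifting property against the initial morphism, then apply the strengthened universal property of $E_f$ from Lemma \ref{splitsquare} to conclude that $f$ is a morphism of $\sfP$-algebras. Your write-up just makes explicit the lifting square and the definition $\lambda_Y = \overline{f}_* \circ \lambda$ that the paper leaves implicit.
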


\begin{proof}
	If $\overline {f}^*$ has the right lifting property with respect to the initial morphism, then
	any morphism $\eta:\sfP \to E_X$ lifts to a morphism $\overline \eta:\sfP \to E_f$.  (In fact, this lift
	is unique up to homotopy over $E_X$.)  Now apply Lemma \ref{splitsquare}.
\end{proof}

\begin{remark}
In fact, given a morphism $f:X \to Y$, one could use this lifting property approach to define the class of PROPs for which algebra structures always transfer along $f$.  Note that in Theorem \ref{thm:hinv}, the hypotheses are made so that the maps $Y_{\underline{d}}^{f_{\underline{c}}}$ and $f_{\underline{d}}^{X_{\underline{c}}}$ are acyclic fibrations, so it suffices to require $\sfP$ to be cofibrant.  However, the same argument implies any cofibrant contractible PROP $\sfP$ has the property that algebra structures extend for any $f$ with $f^*$ (or $f_*$) fibrations.
\end{remark}

 \begin{proof}[Proof of Theorem \ref{thm:hinv}]
Under the hypotheses of (1) and an equivalent form of the pushout-product axiom, the map $f^* = Y_{\underline{d}}^{f_{\underline{c}}}$ (obtained by exponentiating a fibrant object to an acyclic cofibration) in \eqref{eq:pullbacksq} is an acyclic fibration in $\cate$.  Since acyclic fibrations in $\cate$ are closed under pullback, and acyclic fibrations in $\prope$ are defined entrywise, this implies $\overline{f}^*$
 is an acyclic fibration in $\prope$.   Similarly, $\overline{f}_*$ is an acyclic fibration under the hypotheses of (2).   As $\sfP \in \prope$ is assumed to be cofibrant, the claim follows in either case from Proposition \ref{htpyalg}.
\end{proof}

\section{Homotopy topological conformal field theory}
\label{sec:htcft}

The purpose of this section is to provide an example to illustrate (i) the Homotopy Invariance Theorem \ref{thm:hinv} and (ii) the necessity of our (colored) PROP approach.  We will define a homotopy version of topological conformal field theory, and observe that it is, in fact, a homotopy invariant.

Our discussion will mostly focus on the $1$-colored \emph{Segal PROP} (\S \ref{subsec:segalprop}) and topological conformal field theory (\S \ref{subsec:tcft}) following \cite{segalA,segalB,segalC} for clarity.
Some other sources that discuss TCFT in a similar context are \cite{cv,getzler}.  The Segal PROP comes from considering moduli spaces of Riemann surfaces with boundary holes, and it is natural to consider varying (positive) circumferences of these boundary holes as the colors in this setting.   The vertical composition for the Segal PROP comes from holomorphically sewing along boundary holes, so the colored generalization would only allow sewing when the circumferences match.

Considering varying circumferences in the boundary holes is not unprecedented.  For example, in the setting of string topology, there is a combinatorially defined colored PROP $\catrcf(g)$ \cite{chataur,cg} that is built from spaces of reduced metric Sullivan chord diagrams with genus $g$.  Such a Sullivan chord diagram is a marked \emph{fat graph} (also known as ribbon graph) that represents a surface with genus $g$ that has a certain number of input and output circles in its boundary.  These boundary circles are allowed to have different circumferences and these form the set of colors for the colored PROP
$\catrcf(g)$.  However, for the remainder of this section, we will take $\frakC=\{ * \}$.

\subsection{Segal PROP}
\label{subsec:segalprop}

For integers $m, n \geq 1$, let $\Se(m,n)$ be the moduli space of (isomorphism classes of) complex Riemann surfaces whose boundaries consist of $m + n$ labeled holomorphic holes that are mutually non-overlapping.  In the literature, $\Se(m,n)$ is sometimes denoted by $\widehat{\catm}(m,n)$.  The holomorphic holes are actually bi-holomorphic maps from $m + n$ copies of the closed unit disk to the Riemann surface.  The first $m$ labeled holomorphic holes are called the \emph{outputs} and the last $n$ are called the \emph{inputs}.  Note that these Riemann surfaces $M$ can have arbitrary genera and are \emph{not} required to be connected.

One can visualize a Riemann surface $M \in \Se(m,n)$ as a pair of ``alien pants" in which there are $n$ legs (the inputs) and $m$ waists (the outputs).  See Figure \ref{alienpants}.  With this picture in mind, such a Riemann surface is also known as a \emph{worldsheet} in the physics literature.  In this interpretation, a worldsheet is an embedding of closed strings in space-time.  We think of such a Riemann surface $M$ as a machine that provides an operation with $n$ inputs and $m$ outputs.

The collection of moduli spaces $\{\Se(m,n) \colon m, n \geq 1\}$ forms a ($1$-colored) topological PROP $\Se$, called the \emph{Segal PROP}, also known as the \emph{Segal category}.  This Segal PROP $\Se$ is an honest PROP, in the sense that it is not generated by an operad, hence the need for additional machinery.

Using the characterization of PROPs from Proposition \ref{coloredprop}, its horizontal composition
\[
\Se(m_1,n_1) \times \Se(m_2,n_2) \xrightarrow{\otimes \,=\, \sqcup} \Se(m_1 + m_2,n_1 + n_2)
\]
is given by disjoint union $M_1 \sqcup M_2$.  In other words, put two pairs of alien pants side-by-side.  Its vertical composition
\[
\Se(m,n) \times \Se(n,k) \xrightarrow{\circ} \Se(m,k), \quad (M,N) \mapsto M \circ N
\]
is given by holomorphically sewing the $n$ output holes (the waists) of $N$ with the $n$ input holes (the legs) of $M$.  The $\Sigma_m$-$\Sigma_n$ action on $\Se(m,n)$ is given by permuting the labels of the $m$ output and the $n$ input holomorphic holes.

\begin{center}
\begin{figure}
\label{alienpants}
\setlength{\unitlength}{.6mm}
\begin{picture}(120,70)(-20,0)
\qbezier(20,15)(20,17)(25,17)
\qbezier(25,17)(30,17)(30,15)
\qbezier(20,15)(20,13)(25,13)
\qbezier(25,13)(30,13)(30,15)
\qbezier(30,15)(30,18)(35,18) 
\qbezier(35,18)(40,18)(40,15) 
\qbezier(40,15)(40,17)(45,17)
\qbezier(45,17)(50,17)(50,15)
\qbezier(40,15)(40,13)(45,13)
\qbezier(45,13)(50,13)(50,15)
\qbezier(20,15)(20,22.5)(15,30)
\qbezier(15,30)(10,37.5)(10,45)
\qbezier(50,15)(50,22.5)(55,30)
\qbezier(55,30)(60,37.5)(60,45)
\qbezier(10,45)(10,43)(15,43)
\qbezier(15,43)(20,43)(20,45)
\qbezier(10,45)(10,47)(15,47)
\qbezier(15,47)(20,47)(20,45)
\qbezier(20,45)(20,42)(25,42) 
\qbezier(25,42)(30,42)(30,45) 
\qbezier(30,45)(30,43)(35,43)
\qbezier(35,43)(40,43)(40,45)
\qbezier(30,45)(30,47)(35,47)
\qbezier(35,47)(40,47)(40,45)
\qbezier(40,45)(40,42)(45,42) 
\qbezier(45,42)(50,42)(50,45) 
\qbezier(50,45)(50,43)(55,43)
\qbezier(55,43)(60,43)(60,45)
\qbezier(50,45)(50,47)(55,47)
\qbezier(55,47)(60,47)(60,45)
\qbezier(21,31)(23,29)(25,29) 
\qbezier(25,29)(27,29)(29,31)
\qbezier(23,30)(23,31)(25,31)
\qbezier(25,31)(27,31)(27,30)
\qbezier(41,31)(43,29)(45,29) 
\qbezier(45,29)(47,29)(49,31)
\qbezier(43,30)(43,31)(45,31)
\qbezier(45,31)(47,31)(47,30)
\qbezier(70,15)(70,17)(75,17)
\qbezier(75,17)(80,17)(80,15)
\qbezier(70,15)(70,13)(75,13)
\qbezier(75,13)(80,13)(80,15)
\qbezier(80,15)(80,18)(85,18) 
\qbezier(85,18)(90,18)(90,15) 
\qbezier(90,15)(90,17)(95,17)
\qbezier(95,17)(100,17)(100,15)
\qbezier(90,15)(90,13)(95,13)
\qbezier(95,13)(100,13)(100,15)
\qbezier(100,15)(100,18)(105,18) 
\qbezier(105,18)(110,18)(110,15) 
\qbezier(110,15)(110,17)(115,17)
\qbezier(115,17)(120,17)(120,15)
\qbezier(110,15)(110,13)(115,13)
\qbezier(115,13)(120,13)(120,15)
\qbezier(90,45)(90,47)(95,47)
\qbezier(95,47)(100,47)(100,45)
\qbezier(90,45)(90,43)(95,43)
\qbezier(95,43)(100,43)(100,45)
\qbezier(70,15)(70,22.5)(80,30)
\qbezier(80,30)(90,37.5)(90,45)
\qbezier(120,15)(120,22.5)(110,30)
\qbezier(110,30)(100,37.5)(100,45)
\qbezier(91,31)(93,29)(95,29)
\qbezier(95,29)(97,29)(99,31)
\qbezier(93,30)(93,31)(95,31)
\qbezier(95,31)(97,31)(97,30)
\put(65,65){\makebox(0,0){outputs}}
\put(65,-2){\makebox(0,0){inputs}}
\put(25,8){\makebox(0,0){$4$}}
\put(45,8){\makebox(0,0){$1$}}
\put(75,8){\makebox(0,0){$5$}}
\put(95,8){\makebox(0,0){$2$}}
\put(115,8){\makebox(0,0){$3$}}
\put(15,52){\makebox(0,0){$2$}}
\put(35,52){\makebox(0,0){$4$}}
\put(55,52){\makebox(0,0){$1$}}
\put(95,52){\makebox(0,0){$3$}}
\end{picture}
\caption{An element in $\Se(4,5)$ with two connected components.}
\end{figure}
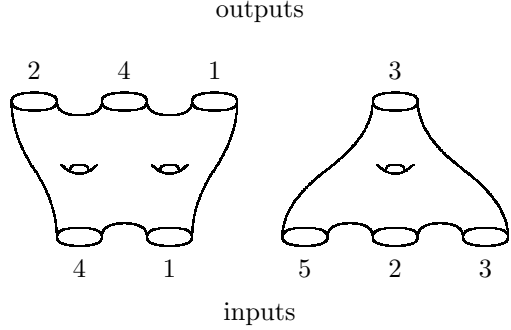
\end{center}

\subsection{Topological conformal field theory}
\label{subsec:tcft}

We are interested in the chain level algebras of the Segal PROP $\Se$.  So we first pass to a suitable category of chain complexes.  Let $\Ch(R)$ be the category of non-negatively graded chain complexes of modules over a fixed commutative ring $R$, in applications a field of characteristic $0$.  Let $C_*$ be the singular chain functor with coefficients in $R$ from topological spaces to $\Ch(R)$.  Applying $C_*$ to the Segal PROP $\Se$, we obtain the \emph{chain Segal PROP} $\bSe = C_*(\Se)$, which is a ($1$-colored) PROP over $\Ch(R)$.

A \emph{topological conformal field theory} (TCFT) is defined as an $\bSe$-algebra.  It is also known as a \emph{string background} in the literature.  In other words, a TCFT has an underlying chain complex $A = \{A_n\}$ (the \emph{states space}) together with chain maps
\[
\lambda_{m,n} \colon \bSe(m,n) \otimes A^{\otimes n} \to A^{\otimes m}
\]
for $m, n \geq 1$ such that:
\begin{enumerate}
\item
The map $\lambda_{m,n}$ is $\Sigma_m$-$\Sigma_n$ equivariant.
\item
The following two diagrams are commutative, where $B(m,n) = \bSe(m,n) \otimes A^{\otimes n}$.
\[
\SelectTips{cm}{10}
\xymatrix{
\bSe(m_1,n_1) \otimes \bSe(m_2,n_2) \otimes A^{\otimes (n_1 + n_2)} \ar[r]^-{\text{switch}} \ar[d]_{\sqcup \otimes Id} & B(m_1,n_1) \otimes B(m_2,n_2) \ar[d]^{\lambda \otimes \lambda} \\
\bSe(m_1 + m_2,n_1 + n_2) \otimes A^{\otimes (n_1 + n_2)} \ar[r]^-{\lambda} & A^{\otimes m_1} \otimes A^{\otimes m_2},
}
\]
\begin{equation}
\label{sewingaxiom}
\SelectTips{cm}{10}
\xymatrix{
\bSe(m,n) \otimes \bSe(n,k) \otimes A^{\otimes k} \ar[r]^-{Id \otimes \lambda} \ar[d]_{\circ \otimes Id} & \bSe(m,n) \otimes A^{\otimes n} \ar[d]^\lambda \\
\bSe(m,k) \otimes A^{\otimes k} \ar[r]^-{\lambda} & A^{\otimes m}.
}
\end{equation}
\end{enumerate}
A \emph{morphism} of $TCFTs$ is a chain map $f \colon A \to B$ that is compatible with the structure maps $\lambda$ (see \eqref{algebramap}).

To simplify typography, we have abused notations and used $\sqcup$ and $\circ$ here to denote the images of the horizontal and vertical compositions in $\Se$ under the singular chain functor $C_*$. The commutativity of the second diagram \eqref{sewingaxiom} is usually called the \emph{sewing axiom}.  One can think of a TCFT as a chain level manifestation of topological string theory.

Now suppose that $f \colon A \to B$ is a quasi-isomorphism (i.e., a chain map that induces an isomorphism on homology).  Then from a homotopy point-of-view $A$ and $B$ should be considered ``the same."  If $A$ has a TCFT structure, then morally so should $B$, and vice versa, where $f$ becomes a morphism of TCFTs.  In other words:
\begin{center}
\emph{Topological conformal field theory, or a close variant of it, should be homotopy invariant}.
\end{center}
For the rest of this section, we will describe a suitable setting in which this is true.

\subsection{Homotopy TCFT}
We will fit TCFTs into the setting of the Homotopy Invariance Theorem \ref{thm:hinv} by enlarging the category to a homotopy version of TCFT.  To have a good notion of homotopy, let us first describe a model category structure on $\Ch(R)$.

In $\Ch(R)$, where $R$ is a field of characteristic $0$, a map $f \colon A \to B$ is a:
\begin{enumerate}
\item
\emph{weak equivalence} if and only if it is a quasi-isomorphism;
\item
\emph{fibration} if and only if $f_k \colon A_k \to B_k$ is a surjection for $k > 0$;
\item
\emph{cofibration} if and only if it is a degree-wise injection.
\end{enumerate}
Full details of the existence of this model category structure can be found in \cite{ds,quillen}.  Moreover, in \cite{hovey} it is shown that this model category is (strongly) cofibrantly generated.  Since the $0$ object in $\Ch(R)$ is the $0$ chain complex, every object $A \in \Ch(R)$ is both fibrant and cofibrant.  Note that by the $5$-Lemma, an acyclic fibration $f$ is surjective in dimension $0$ as well.  So the acyclic fibrations in $\Ch(R)$ are exactly the surjective quasi-isomorphisms.

As discussed in the Introduction, the model category $\Ch(R)$ satisfies the hypotheses of Theorem \ref{thm:coloredpropmodel}.  Therefore, the category $\prop(\Ch(R))$ of $\frakC$-colored PROPs over $\Ch(R)$ inherits a cofibrantly generated model category structure in which the fibrations and weak equivalences are defined entrywise.

Now consider the chain Segal PROP $\bSe$.  To use the Homotopy Invariance Theorem \ref{thm:hinv}, we need a cofibrant version of $\bSe$.  There is a cofibrant replacement functor \cite[10.5.16]{hir} in the model category $\prop(\Ch(R))$, provided by Quillen's small object argument.  Applying this cofibrant replacement functor to $\bSe$, we have an acyclic fibration
\begin{equation}
\label{cofibrantSegal}
r \colon \bSe_\infty \buildrel \sim \over \twoheadrightarrow \bSe
\end{equation}
in which $\bSe_\infty$ is a cofibrant PROP over $\Ch(R)$.  We call $\bSe_\infty$ \emph{the cofibrant chain Segal PROP}.  Since weak equivalences and fibrations in $\prop(\Ch(R))$ are defined entrywise, each chain map
\[
r(m,n) \colon \bSe_\infty(m,n) \buildrel \sim \over \twoheadrightarrow \bSe(m,n) = C_*(\Se(m,n))
\]
is a surjective quasi-isomorphism.

\begin{definition}
\label{htcft}
The category of \emph{homotopy topological conformal field theories} (HTCFTs), or \emph{homotopy string backgrounds}, is defined to be the category of $\bSe_\infty$-algebras.
\end{definition}

One can unpack this definition and interpret an HTCFT in terms of structure maps $\lambda_{m,n}$ as in section \ref{subsec:tcft}.  Notice that TCFTs are HTCFTs.  Indeed, a TCFT is given by a map $\lambda \colon \bSe \to E_A$ of PROPs over $\Ch(R)$, where $E_A$ is the endomorphism PROP of a chain complex $A$.  Composing with $r$ of \eqref{cofibrantSegal}, we have a map $\lambda r \colon \bSe_\infty \to E_A$ of PROPs, which gives an HTCFT structure on $A$.

We now obtain Corollary \ref{cor:HTCFT}, which is a restatement of Theorem \ref{thm:hinv} in the current setting: $\sfP = \bSe_\infty$ and $\cate = \Ch(R)$. Since TCFTs are, in particular, HTCFTs, we also have the following alternative version of Corollary \ref{cor:HTCFT}.  Again, keep in ming that all objects in $\Ch(R)$ being both fibrant and cofibrant simplifies the statement.

\begin{corollary}
\label{cor3:htcft}
Let $f \colon A \to B$ be a map in $\Ch(R)$, where $R$ is a field of characteristic $0$.
\begin{enumerate}
\item
Suppose that $f$ is an injective quasi-isomorphism.  If $A$ is a TCFT, then there exists an HTCFT structure on $B$ such that $f$ becomes a morphism of HTCFTs.
\item
Suppose that $f$ is a surjective quasi-isomorphism.  If $B$ is a TCFT, then there exists an HTCFT structure on $A$ such that $f$ becomes a morphism of HTCFTs.
\end{enumerate}
\end{corollary}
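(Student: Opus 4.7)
The plan is to deduce Corollary \ref{cor3:htcft} directly from Corollary \ref{cor:HTCFT} (equivalently, Theorem \ref{thm:hinv} instantiated with $\sfP = \bSe_\infty$ and $\cate = \Ch(R)$), after a one-line promotion of the given TCFT hypothesis to an HTCFT hypothesis. This reduces the corollary to essentially two steps.

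Step one: promote the TCFT to an HTCFT. If $\lambda \colon \bSe \to E_X$ is the structure map of a TCFT on $X$, then composition with the acyclic fibration $r \colon \bSe_\infty \twoheadrightarrow \bSe$ from \eqref{cofibrantSegal} produces a morphism $\lambda \circ r \colon \bSe_\infty \to E_X$ of PROPs over $\Ch(R)$, which by Definition \ref{def:coloredPROPalgebra} endows $X$ with an HTCFT structure. This remark already appears in the paragraph preceding Definition \ref{htcft}; I would just invoke it here to convert the hypothesis ``$A$ is a TCFT'' (respectively, ``$B$ is a TCFT'') into ``$A$ is an HTCFT'' (respectively, ``$B$ is an HTCFT'').

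Step two: transfer the resulting HTCFT structure across $f$ via Corollary \ref{cor:HTCFT}. Here I would record that in $\Ch(R)$ with $R$ a field of characteristic zero, every object is both fibrant and cofibrant in the usual projective model structure, so the hypotheses in Theorem \ref{thm:hinv} concerning fibrancy of the $Y_{\uc}$ or cofibrancy of the $X_{\uc}$ are automatic. In case (1), an injective quasi-isomorphism $f$ is an acyclic cofibration in $\Ch(R)$; one then checks that its tensor powers $f_{\uc} = f_{c_1} \otimes \cdots \otimes f_{c_n}$ remain acyclic cofibrations for every profile $\uc$, so that Theorem \ref{thm:hinv}(1) applies and produces the desired HTCFT structure on $B$ making $f$ a morphism of HTCFTs. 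In case (2), a surjective quasi-isomorphism is an acyclic fibration, and the analogous statement for its tensor powers puts us in the situation of Theorem \ref{thm:hinv}(2).

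The only point requiring verification is the tensor-power step: that $f_{\uc}$ inherits acyclicity together with the relevant cofibration or fibration property from $f$. This is really the main obstacle, but in $\Ch(R)$ over a characteristic-zero field it is routine, since the pushout-product axiom combined with every object being cofibrant and fibrant guarantees by induction on $|\uc|$ that tensor products of acyclic cofibrations (respectively, acyclic fibrations between fibrant objects) remain of the same type. Once this is noted, the corollary follows immediately from the two steps above.
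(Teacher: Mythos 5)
Your proposal is correct and follows essentially the same route as the paper, which simply observes that a TCFT becomes an HTCFT by precomposing its structure map with the acyclic fibration $r \colon \bSe_\infty \to \bSe$ and then invokes Corollary \ref{cor:HTCFT}. The only quibble is that your appeal to the pushout-product axiom really only covers the acyclic-cofibration half of the tensor-power check; for the tensor powers of a surjective quasi-isomorphism one should instead note that over a field every complex is flat, so surjectivity and (by K\"unneth) acyclicity are both preserved by $\otimes$ -- a routine point the paper does not even record.
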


\section{Model structure on algebras}
\label{sec:admissible}

The purpose of this section is to prove Theorem \ref{thm:algebralift}, which says that, under some suitable assumptions on $\cate$, there is a lifted model category structure on the category of $\sfP$-algebras.

We begin with the following key technical result, keeping in mind that $\cate^\frakC$ has a projective model structure by \cite[11.6.1]{hir}.  This result can be considered as a colored PROP version of the discussion in \cite[\S 4.1.14]{rezk}, which focuses on operads.

\begin{lemma}
	\label{factorcompat}
	Suppose finite tensor products of fibrations with fibrant targets remain fibrations in $\cate$
	and that $\sfP$ is a cofibrant object of $\prope$.
	Then for any factorization of a morphism of $\sfP$-algebras
	$g:A \to C$ with (entrywise) fibrant target as
	$A \stackrel{f}{\to} B \stackrel{h}{\to} C$ with $f$ an acyclic cofibration in
	$\cate^\frakC$ and $h$ a fibration in $\cate^\frakC$, one may equip $B$ with a $\sfP$-algebra
	structure making both $f$ and $h$ morphisms of $\sfP$-algebras.
\end{lemma}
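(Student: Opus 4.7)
The strategy parallels the construction in Section \ref{homotopyinvariance}, and in particular Lemma \ref{splitsquare}. The plan is to build a ``three-way'' relative endomorphism PROP $E_{f,h} \in \prope$ that simultaneously encodes $\sfP$-algebra structures on $A$, $B$, and $C$ for which both $f$ and $h$ are $\sfP$-algebra morphisms, then to compare $E_{f,h}$ with $E_g$ and to invoke cofibrancy of $\sfP$ to obtain the desired lift.

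First, I would define $E_{f,h}$ entrywise as the iterated pullback
\[
E_{f,h}\binom{\ud}{\uc} \;=\; A_\ud^{A_\uc} \times_{B_\ud^{A_\uc}} B_\ud^{B_\uc} \times_{C_\ud^{B_\uc}} C_\ud^{C_\uc},
\]
equipped with horizontal and vertical compositions induced by the universal property of pullbacks (exactly as in Lemma \ref{splitsquare}) from those on $E_A$, $E_B$, and $E_C$. A direct generalization of the strengthened universal property in Lemma \ref{splitsquare} then shows that a morphism $\sfP \to E_{f,h}$ in $\prope$ is equivalent data to a triple of $\sfP$-algebra structures on $A$, $B$, $C$ making $f$ and $h$ into $\sfP$-algebra maps.

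Next, the assumed data---$\sfP$-algebra structures on $A$ and $C$ with $g = hf$ a morphism of $\sfP$-algebras---produce a morphism $\sfP \to E_g$ in $\prope$ via Lemma \ref{splitsquare}. There is a natural projection $\pi \colon E_{f,h} \to E_g$ in $\prope$, given entrywise by forgetting the middle factor while using the pullback conditions to match the remaining data. It therefore suffices to lift $\sfP \to E_g$ through $\pi$, which by cofibrancy of $\sfP$ will follow once $\pi$ is shown to be an acyclic fibration in $\prope$.

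Since fibrations and weak equivalences in $\prope$ are defined entrywise (Theorem \ref{props'}), it is enough to check that each $\pi\binom{\ud}{\uc}$ is an acyclic fibration in $\cate$. A direct diagram chase reorganizes $\pi\binom{\ud}{\uc}$ as the pullback, along $E_g\binom{\ud}{\uc} \to B_\ud^{A_\uc} \times_{C_\ud^{A_\uc}} C_\ud^{B_\uc}$, of the pullback-cotensor map
\[
B_\ud^{B_\uc} \;\longrightarrow\; B_\ud^{A_\uc} \times_{C_\ud^{A_\uc}} C_\ud^{B_\uc}
\]
associated to $f_\uc \colon A_\uc \to B_\uc$ and $h_\ud \colon B_\ud \to C_\ud$. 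The main obstacle is to verify that this pullback-cotensor is an acyclic fibration in $\cate$. The hypothesis that finite tensor products of fibrations with fibrant targets remain fibrations, together with entrywise fibrancy of $C$ (which propagates to entrywise fibrancy of $B$ via the fibration $h$), delivers $h_\ud$ as a fibration with fibrant target $C_\ud$; pairing this with the fact that $f_\uc$ is an iterated tensor of acyclic cofibrations arising from the acyclic cofibration $f$ in $\cate^\frakC$, and invoking the pushout-product axiom, gives the required acyclic fibration. Pulling back then shows $\pi$ is an acyclic fibration in $\prope$, the lift $\sfP \to E_{f,h}$ exists, and the resulting $\sfP$-algebra structure on $B$ makes both $f$ and $h$ morphisms of $\sfP$-algebras.
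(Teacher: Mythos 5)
Your proposal is correct and follows essentially the same route as the paper: your $E_{f,h}$ is exactly the paper's pullback $E_1 = E_f \times_{E_B} E_h$, the strengthened universal property of Lemma \ref{splitsquare} is invoked the same way, and your identification of $\pi\binom{\ud}{\uc}$ as a base change of the pullback-cotensor map $B_{\ud}^{B_{\uc}} \to B_{\ud}^{A_{\uc}} \times_{C_{\ud}^{A_{\uc}}} C_{\ud}^{B_{\uc}}$ is precisely what the paper establishes (via its longer chain of pasted pullback squares) before applying the pushout-product axiom and lifting against cofibrant $\sfP$. The only difference is that you compress the paper's explicit bookkeeping with the auxiliary objects $E_2,\dots,E_5$ into a single entrywise verification, which is a presentational rather than mathematical distinction.
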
	

\begin{proof}
	First, by Lemma \ref{splitsquare},
	$f$ is a morphism of $\sfP$-algebras precisely when the structure
	maps $\sfP \to E_A$ and $\sfP \to E_B$ both descend from a common morphism
	$\sfP \to E_f$ in $\prope$ and similarly for $h$.  If we form the following pullback
\diagramit{
E_1 \ar[d] \ar[r] & E_h \ar[d] \\
E_f \ar[r] & E_B
}	
	in $\prope$, we can extend it to form the commutative diagram
\diagramit{
E_1 \ar[d] \ar[r] & E_h \ar[d] \ar[r] & E_C \ar[d] \\
E_f \ar[d] \ar[r] & E_B \ar[d] \ar[r] & E_{B,C} \ar[d] \\
E_A \ar[r] & E_{A,B} \ar[r] & E_{A,C}
}	
in $\sigmaec$.
Notice $E_g$ is the pullback of the largest square here, so this induces a morphism in $\prope$,
$\theta:E_1 \to E_g$ by the strengthened universal property discussed in the proof of
Lemma \ref{splitsquare} and
the fact that the composites $E_1 \to E_C$ and $E_1 \to E_A$ are morphisms in $\prope$ by construction.  We will show $\theta$ is an acyclic fibration in $\prope$, hence that there must be a lift in the diagram
\diagramit{
 & E_1 \ar[d]^{\theta} \\
 \sfP \ar[r] \ar@{.>}[ur]& E_g
}
so the induced composite $\sfP \to E_1 \to E_B$ gives $B$ a $\sfP$-algebra structure in which both $f$ and $h$ become morphisms of $\sfP$-algebras.

In fact, we need only show $\theta$ is an acyclic fibration in $\sigmaec$, so for the remainder of the proof we work solely in $\sigmaec$.  Let the following pullback diagrams define their upper left corners
\diagramit{
E_2 \ar[d] \ar[r] & E_{B,C} \ar[d]  & E_3 \ar[r] \ar[d] & E_C \ar[d]   &  E_4 \ar[d] \ar[r] & E_2 \ar[d]
	& E_5 \ar[d] \ar[r] & E_3 \ar[d] \\
E_{A,B} \ar[r] & E_{A,C}               & E_2 \ar[r] & E_{B,C}                 &  E_A \ar[r] & E_{A,B}
	& E_4 \ar[r] & E_2
}
and notice $E_5$ is then canonically isomorphic to $E_g$, so we now consider
$E_1 \to E_5$.  Also notice there is an induced map $E_B \to E_2$, whose entries
are precisely the pushout-product maps
\diagramit{
B_{\ud}^{B_{\uc}} \ar[dr] \ar[ddr]_{(f_{\uc})^*} \ar[drr]^{(h_{\ud})_*} \\
&   E_2\binom{\ud}{\uc}   \ar[d] \ar[r] & C_{\ud}^{B_{\uc}} \ar[d]^{(f_{\uc})^*} \\
& B_{\ud}^{A_{\uc}} \ar[r]_{(h_{\ud})_*} & C_{\ud}^{A_{\uc}} .
}
The assumption about tensors preserving fibrations is needed to conclude $h_{\ud}$ is a fibration
in $\cate$, while $f_{\uc}$ is an acyclic cofibration in $\cate$, so the pushout-product axiom implies $E_B \to E_2$ is an (entrywise) acyclic fibration.

Using the universal property of pullbacks, we now have a commutative cube:
\diagramit{
E_1 \ar[rr] \ar[dr] \ar[dd] & & E_h \ar[rr] \ar[dr] \ar'[d][dd] & & E_C \ar'[d][dd] \ar@{=}[dr] & \\
& E_5 \ar[rr] \ar[dd] & & E_3 \ar[rr] \ar[dd] & & E_C \ar[dd] \\
E_f \ar'[r][rr] \ar[dr] \ar[dd] & & E_B \ar'[d][dd] \ar'[r][rr] \ar[dr] & & E_{B,C} \ar'[d][dd] \ar@{=}[dr] & \\
& E_4 \ar[dd] \ar[rr] & & E_2 \ar[dd] \ar[rr] & & E_{B,C} \ar[dd] \\
E_A \ar@{=}[dr] \ar'[r][rr] & & E_{A,B} \ar@{=}[dr] \ar'[r][rr] & & E_{A,C} \ar@{=}[dr] & \\
& E_A \ar[rr] & & E_{A,B} \ar[rr] & & E_{A,C}
}
Now the diagram
\diagramit{
E_h \ar[d] \ar[r] & E_3 \ar[d] \ar[r] & E_C \ar[d] \\
E_B \ar[r] & E_2 \ar[r] & E_{B,C}
}
in which the right square and the large rectangle are pullbacks, allows us to conclude the left square is a pullback, hence that $E_h \to E_3$ is an acyclic fibration as a base change of $E_B \to E_2$.  Similarly,
\diagramit{
E_f \ar[d] \ar[r] & E_B \ar[d] \\
E_4 \ar[d] \ar[r] & E_2 \ar[d] \\
E_A \ar[r] & E_{A,B}
}
with the rectangle and the bottom square pullbacks, allows us to conclude the top square is a pullback, and so the rectangle in
\diagramit{
E_1\ar[d] \ar[r] & E_h \ar[d] \\
E_f \ar[d] \ar[r] & E_B \ar[d] \\
E_4 \ar[r] & E_2
}
is also a pullback.  Since the composites agree by construction, the rectangle in
\diagramit{
E_1\ar[d] \ar[r] & E_h \ar[d] \\
E_5 \ar[d] \ar[r] & E_3 \ar[d] \\
E_4 \ar[r] & E_2
}
is then also a pullback, with the bottom square defined as one, so we conclude the top square is a pullback as well.  As a consequence, $E_1 \to E_5$ is now an acyclic fibration as desired, as a base change of the acyclic fibration $E_h \to E_3$.
\end{proof}

\begin{proof}[Proof of Theorem \ref{thm:algebralift}]
The forgetful functor $U$ preserves filtered colimits, so we can use Lifting Lemma \ref{basiclift}.  For a fibrant replacement functor, we notice $A \in \alg(\sfP)$ implies $\tilde A \in \alg(\tilde \sfP)$ by functoriality of the symmetric monoidal fibrant replacement functor $\tilde{(-)}$.  Here $\tilde \sfP$ indicates the fibrant replacement in $\prope$, obtained from $\sfP$ by applying the symmetric monoidal fibrant replacement functor to each component and map of $\sfP$.  Now $\tilde A$ equipped with the
$\sfP$-algebra structure associated to the composite $\sfP \to \tilde \sfP \to E_{\tilde A}$ serves as a fibrant replacement for $A$ in $\alg(\sfP)$.

For the required path object construction, let $A$ be an entrywise fibrant $\sfP$-algebra, and form
a strong path object in $\cate^\frakC$, by factoring the diagonal map
$A \stackrel{f}{\to} B \stackrel{h} \to A \times A$ with $f$ an acyclic cofibration and $h$ a fibration.  Since products are formed entrywise, $A \times A$ is also entrywise fibrant, so Lemma \ref{factorcompat} implies $B$ may be given a $\sfP$-algebra structure such that
both $f$ and $h$ are maps of $\sfP$-algebras.  By construction, $f$ is a weak equivalence (although we no longer claim it has the requisite lifting property of a cofibration of $\sfP$-algebras) and $h$ is a fibration, providing the requisite path object for fibrant objects in $\alg(\sfP)$.
\end{proof}

\section{Quillen pairs and Quillen equivalences}
\label{sec:Qpairs}

We would like to understand how various adjoint pairs interact with our model structures.
Throughout this section, we will simply assume the relevant ``projective" model structures exist, and focus on comparing them.  Keep in mind that the model structures may be constructed via methods technically different from those introduced in this article, but the results of this section will still apply.
We will also speak about algebras over PROPs, but similar definitions for algebras over $vPROPs$ or $hPROPs$ (Definition \ref{hpropdef}) would work equally well in what follows.

\begin{lemma}
	\label{adjoints}
	Suppose $(L,R)$ is an adjoint pair, $L:\cate \to \cate'$, where both functors
	are symmetric monoidal and $R$ preserves arbitrary
	coproducts and colimits indexed on connected
	groupoids.  Then they induce adjoint pairs at the level of $\frakC$-colored $\Sigma$-bimodules, $\frakC$-colored PROPs,
	vPROPs, hPROPs, and algebras.
\end{lemma}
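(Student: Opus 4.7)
The plan is to climb the tower of categories one monoidal structure at a time: $\Sigma$-bimodules, vPROPs, hPROPs, $\frakC$-colored PROPs, and finally algebras. The uniform strategy at each level is to check that the lifted pair of functors is strong symmetric monoidal with respect to the relevant monoidal product, so that monoids transfer and the adjunction descends to the category of monoids.

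At the base, $\sigmaec$ is the diagram category $\cate^{\catp_l(\frakC) \times \catp_r(\frakC)}$, so $(L,R)$ lifts entrywise to an adjoint pair $(L_*, R_*)$ between $\sigmaec$ and its $\cate'$-analogue, with unit and counit inherited componentwise. To lift to vPROPs, inspection of \eqref{boxvdc} shows that $\boxv$ is assembled from the tensor $\otimes$ of the base category, coproducts over orbit types of profiles, and the colimit construction $\otimes_{\Sigma_{\ub}}$ of \eqref{eq:tensoroverb}, the last of which is by construction a colimit over the connected groupoid $\Sigma_{\ub}$. Since $L$ is strong symmetric monoidal and a left adjoint, it commutes with all three operations; the hypotheses on $R$ (strong symmetric monoidal, preserving coproducts and connected-groupoid colimits) supply precisely the ingredients needed for $R_*$ to be strong $\boxv$-monoidal as well. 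Thus the adjunction descends to vPROPs.

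The $\boxh$-step is analogous. The product \eqref{boxhsigmaobject} is built from coproducts and the functor $\boxdot = K \iota$, in which $\iota$ is an entrywise $\otimes$ and $K$ is a left Kan extension along the subgroupoid inclusion in \eqref{ei}. For any functor between groupoids, the comma categories governing the pointwise formula for the left Kan extension are themselves groupoids, and hence decompose as disjoint unions of their connected components, so $K$ is assembled entirely from coproducts and colimits over connected groupoids. Combined with the previous paragraph, this shows that both $L_*$ and $R_*$ are strong $\boxh$-monoidal, so the adjunction descends successively to hPROPs and then to $\prope \rightleftarrows \propprime$.

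Finally, for algebras over a fixed $\sfP$, one transports the structure map $\sfP\binom{\ud}{\uc} \otimes X_{\uc} \to X_{\ud}$ through $L$, using the strong monoidal isomorphisms to rewrite $L(\sfP\binom{\ud}{\uc} \otimes X_{\uc})$ as $L_*\sfP\binom{\ud}{\uc} \otimes' L(X)_{\uc}$ and $L(X_{\ud})$ as $L(X)_{\ud}$; this produces an $L_*\sfP$-algebra structure on $\{L(X_c)\}$, and the parallel construction with $R$ gives the right adjoint. The main technical point I anticipate is the Kan extension step in $\boxdot$: one must cleanly identify the relevant comma categories as disjoint unions of connected groupoids in order to invoke the colimit-preservation hypothesis on $R$. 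Once that identification is in hand, the remaining compatibilities, including that the unit and counit are morphisms of algebras, reduce to routine naturality checks with the monoidal coherences.
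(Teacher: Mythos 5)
Your proposal follows essentially the same route as the paper: lift $(L,R)$ entrywise to $\frakC$-colored $\Sigma$-bimodules, observe that $\boxv$ and $\boxh$ are assembled from $\otimes$, coproducts, and colimits indexed on connected groupoids (which is exactly where the hypothesis on $R$ enters, including for the left Kan extension inside $\boxdot$), and then transport algebra structures via the monoidal comparison maps and the unit of the adjunction. Your handling of the Kan-extension step through comma categories is in fact slightly more explicit than the paper's one-line remark, but the argument is the same.
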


\begin{proof}
	First, $(L,R)$ extend entrywise to bimodules, as a diagram category.	
	In the three PROP cases,
	we exploit the fact that $L$ and $R$ must preserve the relevant monoidal operations,
	which are constructed from the symmetric monoidal operation $\otimes$ in a natural way.
	Notice here we require the assumption on $R$, since the constructions
	$\Sigma_{\ub}$ (for $\boxv$) and
	$\Sigma_{\ud;\uc} \times \Sigma_{\ub;\ua}$ (for $\boxh$) involve coproducts and colimits
	indexed on connected groupoids.
	
	Finally, for the case of algebras, given $Y$ an $L\sfP$-algebra, there is an induced
	$\sfP$-algebra structure on $RY$ with structure maps
\diagramit{
\sfP\dc \otimes RY_{\uc} \ar[r]^-{\eta \otimes 1} & RL\sfP\dc \otimes RY_{\uc} \ar[r]
	& R\left(L\sfP\dc \otimes Y_{\uc}\right) \ar[d] \\
	&&RY_{\ud}
}
	with the last map given by the structure map of $Y$ as an $L\sfP$-algebra.
	Once again, we have extended $L$ and $R$ entrywise here in the product category
	underlying algebras, where they remain an adjoint pair.
 	Then an exercise involving
	the triangular identities for an adjunction and the assumption that both $L$ and $R$ are
	symmetric monoidal implies
\diagramit{
\sfP\dc \otimes X_{\uc} \ar[d] \ar[r] & X_{\ud} \ar[d] \\
\sfP\dc \otimes RY_{\uc} \ar[r] & RY_{\ud}
}	
	commutes in $\cate$ precisely when the corresponding diagram
\diagramit{
L\sfP\dc \otimes LX_{\uc} \ar[d] \ar[r] & LX_{\ud} \ar[d] \\
L\sfP\dc \otimes Y_{\uc} \ar[r] & Y_{\ud}
}
	commutes in $\cate'$, which completes the proof.	
\end{proof}

\begin{proposition}
	Suppose $(L,R)$ is a strong Quillen pair, $L:\cate \to \cate'$, where the
	component functors are symmetric monoidal and
	$R$ preserves arbitrary coproducts and colimits indexed on connected
	groupoids.  Then they induce
	strong Quillen pairs at the level of $\frakC$-colored $\Sigma$-bimodules, $\frakC$-colored PROPs, vPROPs, hPROPs, and
	algebras.  If, in addition, $(L,R)$ form a Quillen equivalence, then the same is true
	of each induced adjunction, whenever the entries of cofibrant objects are
	cofibrant in $\cate$.
\end{proposition}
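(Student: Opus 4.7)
The plan is to first invoke Lemma \ref{adjoints} to obtain the induced adjoint pairs $(L,R)$ at each of the listed levels, and then verify the Quillen conditions by systematically exploiting the entrywise definition of fibrations and weak equivalences in each of the lifted model structures. The key structural observation is that in all the model structures under consideration -- on $\sigmaec$ (Proposition \ref{sigmabimods}), on $\prope$ (Theorem \ref{props'}), on vPROPs and hPROPs (Theorem \ref{thm:vhprops}), and on $\alg(\sfP)$ (Theorem \ref{thm:algebralift}) -- both fibrations and weak equivalences are detected entrywise in $\cate$ (or in $\ec$). Since the induced $R$ is constructed by applying the original $R$ entrywise and then transporting the algebraic structure, it follows immediately from the Quillen pair hypothesis on $\cate$ that the induced $R$ preserves fibrations and acyclic fibrations at each level, which is precisely the condition for a strong Quillen pair.

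For the Quillen equivalence part, I would proceed as follows. Let $X$ be a cofibrant object in the source category and $Y$ a fibrant object in the target, and consider a pair of adjoint morphisms $\varphi \colon LX \to Y$ and $\psi \colon X \to RY$. By the entrywise definition of weak equivalences, $\varphi$ is a weak equivalence if and only if each entry $\varphi_{(\ud;\uc)} \colon LX_{(\ud;\uc)} \to Y_{(\ud;\uc)}$ is a weak equivalence in $\cate'$, and similarly for $\psi$. Since fibrancy is also defined entrywise, each $Y_{(\ud;\uc)}$ is fibrant in $\cate'$; by the hypothesis that cofibrant objects have cofibrant entries in $\cate$, each $X_{(\ud;\uc)}$ is cofibrant in $\cate$. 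Hence each entrywise adjoint pair sits between a cofibrant object and a fibrant object in the Quillen equivalent pair on $\cate$, so the biconditional characterization of weak equivalences transfers entrywise to give the required Quillen equivalence in each lifted setting.

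The main obstacle I expect is the careful bookkeeping in the algebra case, since the induced adjunction runs between $\sfP$-algebras in $\cate$ and $L\sfP$-algebras in $\cate'$, which live over different PROPs. One must check that the unit and counit of the induced adjunction on algebras agree, after forgetting to $\ec$ and $\ec'$, with the entrywise unit and counit of the original $(L,R)$. This is exactly the compatibility verified in the proof of Lemma \ref{adjoints} via the symmetric monoidal coherence of $L$ and $R$ together with the triangular identities, so the entrywise reduction above remains valid. A secondary subtlety is that at the PROP level the induced $L$ is not simply the pointwise extension but carries transported monoidal structure; however, since both the forgetful functor to $\sigmaec$ and its analogue on the $\cate'$ side commute with the induced $L$ and $R$ by construction, checking the weak equivalence condition ultimately reduces to the $\Sigma$-bimodule level, where the argument is completely entrywise and no further obstruction arises.
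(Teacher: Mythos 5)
Your proposal is correct and follows essentially the same route as the paper: the strong Quillen pair condition is checked by noting that (acyclic) fibrations are entrywise and preserved by the entrywise right adjoint of Lemma \ref{adjoints}, and the Quillen equivalence is reduced entrywise using the hypothesis that cofibrant objects have cofibrant entries (together with entrywise fibrancy of fibrant targets). The paper's proof is just a terser statement of exactly these two observations.
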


\begin{proof}
	To verify that each adjunction forms a strong Quillen pair,
	it is simplest to observe that (acyclic) fibrations are defined in terms of
	entries in $\cate$ or $\cate'$, hence preserved by the entrywise
	right adjoints of Lemma \ref{adjoints}.
	For verifying the induced Quillen equivalence condition,
	it is key to know that the entries of a cofibrant
	object are cofibrant objects in $\cate$ in each case.
\end{proof}

\begin{remark}
	The ability to change our underlying model category up to Quillen equivalence is particularly gratifying since our technical assumptions have incidentally excluded important examples like (pointed) topological spaces.  Now we can conveniently say working instead with (pointed) simplicial sets is equally valid, as the cofibrant entries condition is vacuously satisfied over simplicial sets.  This cofibrant entries condition is also automatically satisfied for any category of $\frakC$-colored $\Sigma$-bimodules, as in
Proposition \ref{sigmabimods}.
\end{remark}

There are also adjoint pairs induced by changing colors; thus we assume we have a map
$\alpha:\frakC \to \frakC'$ of sets.

\begin{lemma}
	\label{changecolors}
	There are adjoint pairs given by changing colors, denoted $(\alpha_!,\alpha^*)$,
	at the level of colored $\Sigma$-bimodules, colored PROPs, vPROPs, hPROPs, and
	algebras.
\end{lemma}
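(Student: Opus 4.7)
The plan is to construct the right adjoint $\alpha^*$ directly as a pullback along the induced map of profile groupoids on each of the five categories, verify that it preserves the monoidal and algebraic structure entrywise, and then produce the left adjoint $\alpha_!$ by adjoint-functor-theorem arguments. First, the set map $\alpha \colon \frakC \to \frakC'$ extends entrywise to a functor $\bar\alpha \colon \catp_l(\frakC) \times \catp_r(\frakC) \to \catp_l(\frakC') \times \catp_r(\frakC')$ of groupoids, since it is compatible with concatenation and with permutation actions (which act purely by reindexing). Precomposition along $\bar\alpha$ defines the $\Sigma$-bimodule level functor
\[
\alpha^* \colon \mathbf{\Sigma}^{\frakC'}_\cate \to \sigmaec, \qquad (\alpha^*\sfQ)\dc := \sfQ\binom{\alpha\ud}{\alpha\uc},
\]
and its left adjoint $\alpha_!$ at this level is the standard left Kan extension along $\bar\alpha$, which exists since $\cate$ is cocomplete.

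For the three PROP variants, one observes from the entrywise descriptions \eqref{boxvdc} and \eqref{boxhsigmaobject} that $\alpha^*$ is lax monoidal for both $\boxv$ and $\boxh$: the comparison maps arise from the inclusion of summands indexed by $\frakC$-orbit types into the coproducts of summands indexed by $\frakC'$-orbit types (via $[\ub] \mapsto [\alpha\ub]$), combined with the entrywise identification of tensor products. Lax monoidal functors preserve monoids, so $\alpha^*$ lifts to functors at the vPROP, hPROP, and PROP levels; explicitly, the vertical composition on $\alpha^*\sfP'$ is given by
\[
(\alpha^*\sfP')\binom{\ud}{\ub}\otimes(\alpha^*\sfP')\binom{\ub}{\uc}=\sfP'\binom{\alpha\ud}{\alpha\ub}\otimes\sfP'\binom{\alpha\ub}{\alpha\uc}\xrightarrow{\circ}\sfP'\binom{\alpha\ud}{\alpha\uc}=(\alpha^*\sfP')\binom{\ud}{\uc},
\]
with horizontal composition inherited analogously, and equivariance, associativity, and the interchange rule pulled back directly from $\sfP'$. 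For algebras over a $\frakC'$-colored PROP $\sfP'$, an algebra structure $\lambda$ on $Y=\{Y_{c'}\}_{c'\in\frakC'}$ pulls back to an $\alpha^*\sfP'$-algebra structure on $\alpha^*Y=\{Y_{\alpha(c)}\}_{c\in\frakC}$ by specializing $\lambda$ to profiles in the image of $\alpha$.

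In each case the existence of the left adjoint $\alpha_!$ is established by the adjoint functor theorem: the categories involved are locally presentable (they are diagram categories, or categories of models for finitary monads on $\sigmaec$), and $\alpha^*$ preserves all small limits (these are computed entrywise by Lemma \ref{colimits} and the analogous statement for algebras, and precomposition preserves pointwise limits) together with filtered colimits (same reason). Equivalently, one can present each object as a reflexive coequalizer of free objects over its underlying $\Sigma$-bimodule (or $\frakC$-graded object in the algebra case), apply $\alpha_!$ at the underlying level, and then reapply the corresponding free functor. I expect the main obstacle to be the algebra case: it requires knowing that the forgetful functor from $\alg(\sfP)$ admits a left adjoint — the same extra assumption invoked in Theorem \ref{thm:algebralift} — before the coequalizer route becomes available. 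Once this is in hand, $\alpha_!$ on algebras produces an $\alpha_!\sfP$-algebra from any $\sfP$-algebra, and the adjunction relates $\alg(\sfP)$ to $\alg(\alpha_!\sfP)$, with triangle identities verified by applying the $\Sigma$-bimodule level identities componentwise.
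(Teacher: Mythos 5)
Your construction of $\alpha^*$ by precomposition along the induced functor of profile groupoids, and the observation that it lifts to the three PROP categories because it is compatible with $\boxv$ and $\boxh$, matches the paper. The divergence, and the genuine gap, lies in how you produce $\alpha_!$. The paper does not invoke the adjoint functor theorem at all: it writes down the left Kan extension explicitly as
\[
\alpha_!\sfP\binom{\ud'}{\uc'} \;=\; \coprod_{\alpha(\uc)=\uc',\,\alpha(\ud)=\ud'} \sfP\binom{\ud}{\uc},
\]
checks the universal property from this formula, and then verifies directly that the operations of $\sfP$ induce those of $\alpha_!\sfP$. Your first route, via local presentability, is not available in the paper's generality: $\cate$ is only assumed to be a complete and cocomplete (strongly cofibrantly generated, closed symmetric monoidal) category, not a locally presentable one, so diagram categories in $\cate$ and categories of monoids over them need not be locally presentable. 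Your second route (adjoint lifting via reflexive coequalizers of free objects) does work for $\Sigma$-bimodules, vPROPs, hPROPs and PROPs, since the relevant free functors exist and Lemma \ref{colimits} supplies reflexive coequalizers, but it is strictly heavier machinery than the explicit formula.

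The real problem is the algebra case, which you flag but do not resolve. Lemma \ref{changecolors} asserts the adjunction for algebras with no hypothesis on the existence of a free $\sfP$-algebra functor, and the paper explicitly warns (in the discussion of Theorem \ref{thm:algebralift}) that for a general colored PROP the forgetful functor $\alg(\sfP)\to\ec$ need not admit a left adjoint; so conditioning your argument on that assumption does not prove the lemma as stated. The paper's argument avoids this entirely: define $\alpha_!X$ on underlying graded objects by $(\alpha_!X)_{c'}=\coprod_{\alpha(c)=c'}X_c$, use the fact that $\otimes$ distributes over coproducts (because $\cate$ is closed) to identify
\[
(\alpha_!X)_{\uc'}\;\cong\;\coprod_{\alpha(\uc)=\uc'}X_{\uc},
\]
equip $\alpha_!X$ with an $\alpha_!\sfP$-algebra structure summand by summand (matching summands act through the structure map of $X$, non-matching ones through the zero map), and then check via the universal property of coproducts that the square \eqref{algebramap} exhibiting $\alpha_!X\to Y$ as a morphism of $\alpha_!\sfP$-algebras commutes precisely when the corresponding square exhibiting $X\to\alpha^*Y$ as a morphism of $\sfP$-algebras commutes. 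No free-algebra functor is needed. To complete your proof you would need to replace the coequalizer argument in the algebra case by this explicit construction.
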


\begin{proof}
	Here $\alpha^*$ is just a precomposition functor, where we again use $\alpha$ to denote
	the induced functor at the level of pairs of profiles $\catp_l(\frakC) \times \catp_r(\frakC) \to
	\catp_l(\frakC') \times \catp_r(\frakC')$.
	Hence, the usual left Kan extension formula for diagram categories
	gives the left adjoint at the level of colored
	$\Sigma$-bimodules.  Alternatively, we notice defining
\[\alpha_!\sfP\dcprime = \coprod_{\alpha(\uc)=\uc',\alpha(\ud)=\ud'} \sfP\dc
\]	
	has the requisite universal property to define this left adjoint.  With this formulation,
	it is not particularly difficult to verify that the operations from $\sfP$ induce those
	of $\alpha_!\sfP$ for the three PROP cases.
	
	For the case of algebras, we notice a similar coproduct formula will define the left
	adjoint to the precomposition functor we will again call $\alpha^*$ at the level of
	product categories underlying algebras.  Then we exploit the fact that $\otimes$ must
	distribute over coproducts by the existence of exponential objects, and the universal
	properties of coproducts to verify directly that the squares defining
	$\alpha_!X \to Y$ as a morphism of
	$\alpha_!\sfP$-algebras commute precisely when those defining
	$X \to \alpha^*Y$ as a morphism of $\sfP$-algebras commute.
\end{proof}

\begin{proposition}
	There is a strong Quillen pair $(\alpha_!,\alpha^*)$ at the level of
	colored $\Sigma$-bimodules, colored PROPs, vPROPs, hPROPs, and
	algebras associated to any $\alpha:\frakC \to \frakC'$.
\end{proposition}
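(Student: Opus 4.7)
My plan is to verify the Quillen pair condition by showing that in each of the five settings the right adjoint $\alpha^*$ preserves fibrations and acyclic fibrations. This works uniformly because, in every case, fibrations and weak equivalences are defined entrywise (either in $\cate$ or in $\cate^\frakC$), while $\alpha^*$ is constructed by precomposition, and therefore each entry of $\alpha^*\sfP'$ coincides with some entry of $\sfP'$ itself. So no extra work beyond inspecting the definitions should be needed.

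Concretely, for colored $\Sigma$-bimodules we have $(\alpha^*\sfP')\binom{\ud}{\uc} = \sfP'\binom{\alpha(\ud)}{\alpha(\uc)}$, where $\alpha$ is applied entrywise to profiles. A fibration (resp.\ weak equivalence) $\sfP' \to \sfQ'$ in $\sigma$-bimodules over $\frakC'$ is a fibration (resp.\ weak equivalence) at every pair $(\ud';\uc')$, so in particular at every pair of the form $(\alpha(\ud);\alpha(\uc))$; this says exactly that $\alpha^*\sfP' \to \alpha^*\sfQ'$ is entrywise a fibration (resp.\ weak equivalence) in $\sigmaec$. By Proposition \ref{sigmabimods} this means $\alpha^*$ is right Quillen at the level of $\Sigma$-bimodules. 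For colored PROPs, vPROPs and hPROPs, Theorems \ref{props'} and \ref{thm:vhprops} declare fibrations and weak equivalences to be precisely the entrywise ones, so the very same observation shows $\alpha^*$ is right Quillen in each of those three categories as well.

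For algebras the argument is essentially identical once one recognises that the product-category precomposition $\alpha^*\colon \cate^{\frakC'} \to \cate^{\frakC}$ is given by $(\alpha^*X)_c = X_{\alpha(c)}$, so again each entry of $\alpha^*X$ is already an entry of $X$. Under the hypotheses that make the lifted model structures of Theorem \ref{thm:algebralift} exist on both $\alg(\sfP)$ and $\alg(\alpha_!\sfP)$, their fibrations and weak equivalences are entrywise in $\cate^\frakC$ and $\cate^{\frakC'}$ respectively, so $\alpha^*$ sends them to fibrations and weak equivalences. Combined with the adjunction $(\alpha_!,\alpha^*)$ produced in Lemma \ref{changecolors}, this finishes the proof. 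The only mildly delicate point is bookkeeping: one should note that for the algebra statement, $\sfP$ and $\alpha_!\sfP$ need to satisfy the standing assumptions of Theorem \ref{thm:algebralift} (cofibrancy and existence of a left adjoint to the forgetful functor), and these assumptions should be assumed implicitly, just as the existence of the relevant projective structure was assumed at the outset of the section.
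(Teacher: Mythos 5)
Your proposal is correct and takes essentially the same route as the paper, whose entire proof is the observation that $\alpha^*$, being a precomposition/entrywise functor, preserves the entrywise-defined (acyclic) fibrations in each of the five cases. Your additional bookkeeping about the standing assumptions matches the paper's blanket convention at the start of the section that the relevant projective structures are simply assumed to exist.
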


\begin{proof}
	It again follows directly from the definitions that $\alpha^*$ preserves the class of (acyclic)
	fibrations in each case.
\end{proof}

\begin{remark}
	\label{colorQequiv}
	Notice in this change of colors adjunction, there is no claim of a Quillen equivalence.
	However, the following more general result does imply a Quillen equivalence of projective
	structures as expected in the case of a bijection of colors.
\end{remark}

\begin{corollary}
	\label{colorQequivs}
	If $\alpha:\frakC \to \frakC'$ is injective, there is a strong Quillen equivalence
	$(\alpha_!,\alpha^*)$ at the level of
	colored $\Sigma$-bimodules, colored PROPs, vPROPs, hPROPs, and
	algebras, only after using the modified
	projective structures (as in Corollary \ref{modproj} and Remark \ref{rmk:subset}), choosing only orbits consisting of colors in the image of $\alpha$.
\end{corollary}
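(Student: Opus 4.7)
The plan is to bootstrap from the strong Quillen pair $(\alpha_!, \alpha^*)$ already established in the preceding Proposition by verifying directly that both the unit and counit become weak equivalences after restricting to the modified projective structures. The key input is injectivity of $\alpha$, which collapses the coproduct formula for $\alpha_!$ to at most a single summand on the relevant entries.

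First, I would carry out this computation at the level of $\frakC$-colored $\Sigma$-bimodules. Given any $\frakC$-profile pair $(\ud, \uc)$, injectivity of $\alpha$ makes $(\ud, \uc)$ the unique preimage of $(\alpha(\ud), \alpha(\uc))$, so the coproduct formula from Lemma \ref{changecolors} yields
\[
(\alpha^* \alpha_! \sfP)\binom{\ud}{\uc} = (\alpha_! \sfP)\binom{\alpha(\ud)}{\alpha(\uc)} = \sfP\binom{\ud}{\uc},
\]
and the unit $\sfP \to \alpha^* \alpha_! \sfP$ is an isomorphism. Dually, at a pair $(\ud', \uc')$ of $\frakC'$-profiles whose colors all lie in $\alpha(\frakC)$, say $(\ud', \uc') = (\alpha(\ud), \alpha(\uc))$, the same calculation shows the counit $\alpha_! \alpha^* \sfQ \to \sfQ$ is an isomorphism at $\binom{\ud'}{\uc'}$; at profile pairs involving colors outside $\alpha(\frakC)$, the domain $\alpha_! \alpha^* \sfQ$ evaluates to the initial object via an empty coproduct, but the modified projective structure on the $\frakC'$-side does not test at such entries. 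Hence the counit is a weak equivalence in the modified structure, and since $\alpha^*$ creates weak equivalences into the $\frakC$-side (the $\frakC$-profiles map bijectively onto the orbits being tested on the target), the pair will satisfy the Quillen equivalence criterion once we have transferred the calculation.

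Next, I would transfer these statements to the categories of $\frakC$-colored PROPs, vPROPs, hPROPs, and algebras. Since the modified projective structures on these categories are lifted from the modified structure on $\sigmaec$ (or $\ec$ in the algebra case) via the respective forgetful functors, weak equivalences and fibrations are still detected entrywise on the chosen orbits. Moreover, the monoidal products $\boxv$ and $\boxh$, the horizontal and vertical compositions, and the $\sfP$-algebra structure maps are all compatible with $\alpha_!$ and $\alpha^*$ by Lemma \ref{changecolors} and the proof of the previous Proposition, so the unit and counit at these richer levels are computed entrywise exactly as above. Consequently, for $X$ cofibrant in the source and $Y$ fibrant in the target, the derived adjoint $X \to \alpha^* Y$ factors as $X \cong \alpha^* \alpha_! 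X \to \alpha^* Y$ with the first map an isomorphism, so $\alpha_! X \to Y$ is a weak equivalence if and only if its adjoint is.

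No serious obstacle is anticipated: injectivity of $\alpha$ is precisely what is needed to collapse the coproducts in $\alpha_!$ to isomorphisms on image entries, and the modified projective structure is designed to disregard the remaining entries where $\alpha_! \alpha^* \sfQ$ is initial. The one mild subtlety is verifying that the lifted model structures on PROPs, vPROPs, hPROPs, and algebras really are compatible with the bimodule (or product-category) level modification; this is addressed by the product-decomposition perspective in Corollary \ref{modproj} together with Remark \ref{rmk:subset}, since each lifting argument only uses properties of the entrywise (acyclic) fibrations on the chosen orbits.
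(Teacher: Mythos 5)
Your proposal is correct and follows essentially the same route as the paper: injectivity of $\alpha$ collapses the coproduct formula for $\alpha_!$ so that the unit $X \to \alpha^*\alpha_! X$ is an isomorphism, and the modified projective structure on the $\frakC'$-colored side makes $\alpha^*$ create weak equivalences, which together yield the Quillen equivalence. The only (immaterial) difference is that you verify the equivalence directly from the adjoint-correspondence definition while the paper invokes the derived-unit criterion of \cite[1.3.16]{hovey} via a fibrant replacement; your extra counit analysis is not needed but is consistent with the same computation.
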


\begin{proof}
	The point of using the modified structure on the $\frakC'$-colored structures is that
	weak equivalences are then created by the adjunction, while it remains a strong
	Quillen pair.  Hence, the pair is a Quillen equivalence precisely when the unit of
	the derived adjunction $X \to \alpha^*Q(\alpha_! X)$ is a weak equivalence for every cofibrant
	$\frakC$-colored object $X$, where $Q$ indicates a fibrant replacement in the $\frakC'$-colored
	structures \cite[1.3.16]{hovey}.  However, when $\alpha$ is injective, the formula for
	$\alpha_!$ given in the proof of Lemma \ref{changecolors} implies the unit of the adjunction
	$X \to \alpha^*\alpha_! X$ is the identity.  As noted above, $\alpha^*$ creates weak
	equivalences with this modified projective structure on the $\frakC'$-colored structures, so
	$\alpha_! X \to Q(\alpha_! X)$ a weak equivalence in this structure means
	precisely that $\alpha^*\alpha_! X \to  \alpha^*Q(\alpha_! X)$ is a weak equivalence, and so
	$X \to \alpha^*Q(\alpha_! X)$ is a weak equivalence.	
	\end{proof}

\section{Another description of colored PROPs}
\label{app:coloredPROPs}

In Definition \ref{def:prop}, we defined $\frakC$-colored PROPs as $\boxv$-monoidal $\boxh$-monoids.  In this section, we observe that the roles of $\boxv$ and $\boxh$ can be interchanged.  In other words, $\frakC$-colored PROPs can be equivalently defined as $\boxh$-monoidal $\boxv$-monoids.  We can first build $\frakC$-colored $\Sigma$-bimodules that have a horizontal composition $\otimes$ using a monoidal structure $\boxh$ on $\sigmaec$.  Then we build the vertical composition on top of the horizontal composition by considering a monoidal structure $\boxv$ on $\mon(\sigmaec,\boxh)$.  The monoids in this last monoidal category are exactly the $\frakC$-colored PROPs.

In more details, the functor $\boxh$ defined in \eqref{boxhsigmaobject} gives a monoidal product $\boxh \colon \sigmaec \times \sigmaec \to \sigmaec$ on $\sigmaec$, since its definition only involves the $\frakC$-colored $\Sigma$-bimodule structures on the two arguments.

\begin{definition}
\label{hpropdef}
Denote by $\hprope$ the category of monoids in the monoidal category $(\sigmaec, \boxh)$ (without unit), whose objects are called \textbf{hPROPs}.
\end{definition}

From the definitions of $\boxh$ and an associative map $\sfP \boxh \sfP \to \sfP$, an hPROP consists of exactly the data:
\begin{enumerate}
\item
An object $\sfP \in \sigmaec$.
\item
An associative horizontal composition $\otimes$ as in \eqref{horizontalcomposition}.
\end{enumerate}
Equivalently, the horizontal composition can be described in the level of $\cate$ as an associative operation \eqref{horizontalcomposition2} that is bi-equivariant \eqref{horizontalequivariance}.

To build the vertical compositions on top of the horizontal compositions, we consider the monoidal product $\boxv$ on $\sigmaec$ (Lemma \ref{boxvmonoidal}).  We need to upgrade $\boxv$ to a monoidal product on $\hprope$.  So suppose that $\sfP$ and $\sfQ$ are hPROPs.  Since $\sfP \boxv \sfQ \in \sigmaec$, to make $\sfP \boxv \sfQ$ into an hPROP, we need a horizontal composition
\begin{equation}
\label{hcompboxv}
(\sfP \boxv \sfQ)\binom{[\ud_1]}{[\uc_1]} \boxdot (\sfP \boxv \sfQ)\binom{[\ud_2]}{[\uc_2]} \xrightarrow{\otimes} (\sfP \boxv \sfQ)\binom{[\ud_1,\ud_2]}{[\uc_1,\uc_2]}
\end{equation}
that is induced by those on $\sfP$ and $\sfQ$.  Since $\boxv$ is defined as a coproduct, to define \eqref{hcompboxv} it suffices to define the operation
\begin{equation}
\label{hcompboxv2}
X \boxdot Y \xrightarrow{\otimes} Z
\end{equation}
in $\cate^{\Sigma_{\ud;\uc}}$, where
\[
X = \sfP\binom{[\ud_1]}{[\ub_1]} \otimes_{\Sigma_{\ub_1}} \sfQ\binom{[\ub_1]}{[\uc_1]},\,
Y = \sfP\binom{[\ud_2]}{[\ub_2]} \otimes_{\Sigma_{\ub_2}} \sfQ\binom{[\ub_2]}{[\uc_2]},\,
Z = \sfP\binom{[\ud]}{[\ub]} \otimes_{\Sigma_{\ub}} \sfQ\binom{[\ub]}{[\uc]},
\]
$\ud = (\ud_1,\ud_2)$, $\uc = (\uc_1,\uc_2)$, and $\ub = (\ub_1,\ub_2)$.  Recall that $\boxdot = K\iota$, where $K$ is a left Kan extension \eqref{leftKanext}.  Using the universal properties of left Kan extensions, to define \eqref{hcompboxv2} it suffices to define the operation
\begin{equation}
\label{hcompboxv3}
X \otimes Y \xrightarrow{\otimes} \cate^i(Z) \in \cate^{\Sigma_{\ud_1} \times \Sigma_{\ud_2} \times \Sigma_{\uc_1}^{op} \times \Sigma_{\uc_2}^{op}},
\end{equation}
where $\cate^i$ and $X \otimes Y$ are defined in \eqref{ei} and \eqref{XtensorY}, respectively.  To define the operation \eqref{hcompboxv3}, we go back to the definition of the functor $\otimes_{\Sigma_{\ub}}$ (\eqref{XtensoroverbY} and \eqref{XtensoroverbYmap}).

Consider an object $(\sigma_1\ud_1; \sigma_2\ud_2; \uc_1\tau_1^{-1}; \uc_2\tau_2^{-1}) \in \Sigma_{\ud_1} \times \Sigma_{\ud_2} \times \Sigma_{\uc_1}^{op} \times \Sigma_{\uc_2}^{op}$.  Then
\[
\begin{split}
(X \otimes Y)\left(\sigma_1\ud_1; \sigma_2\ud_2; \uc_1\tau_1^{-1}; \uc_2\tau_2^{-1}\right)
&= X\binom{\sigma_1\ud_1}{\uc_1\tau_1^{-1}} \otimes Y\binom{\sigma_2\ud_2}{\uc_2\tau_2^{-1}}\\
&= \left(\colim D_X\right) \otimes \left(\colim D_Y\right),
\end{split}
\]
where
\[
\begin{split}
D_X &= D_X\left(\sfP\binom{[\ud_1]}{[\ub_1]}, \sfQ\binom{[\ub_1]}{[\uc_1]}; \sigma_1\ud_1, \uc_1\tau_1^{-1}\right) \in \cate^{\Sigma_{\ub_1}},\\
D_Y &= D_Y\left(\sfP\binom{[\ud_2]}{[\ub_2]}, \sfQ\binom{[\ub_2]}{[\uc_2]}; \sigma_2\ud_2, \uc_2\tau_2^{-1}\right) \in \cate^{\Sigma_{\ub_2}}
\end{split}
\]
are the diagrams defined in \eqref{Dobj} and \eqref{tau''}.  Likewise, we have
\[
\cate^i(Z)(\sigma_1\ud_1; \sigma_2\ud_2; \uc_1\tau_1^{-1}; \uc_2\tau_2^{-1}) = \colim D_Z,
\]
where
\[
D_Z = D_Z\left(\sfP\binom{[\ud]}{[\ub]}, \sfQ\binom{[\ub]}{[\uc]}; (\sigma_1\ud_1, \sigma_2\ud_2), (\uc_1\tau_1^{-1}, \uc_2\tau_2^{-1})\right) \in \cate^{\Sigma_{\ub}}.
\]
Let $\mu_1\ub_1 \in \Sigma_{\ub_1}$ and $\mu_2\ub_2 \in \Sigma_{\ub_2}$.  Then we have a natural map
\begin{equation}
\varphi(\mu_1\ub_1,\mu_2\ub_2) \colon D_X(\mu_1\ub_1) \otimes D_Y(\mu_2\ub_2) \to \colim D_Z
\end{equation}
in $\cate$ that is defined as the composite:
\[
\SelectTips{cm}{10}
\xymatrix{
D_{X,Y} \ar@{=}[r] \ar[ddd]_-{\varphi(\mu_1\ub_1,\mu_2\ub_2)} & \left[\sfP\dbinom{\sigma_1\ud_1}{\ub_1\mu_1^{-1}} \otimes \sfQ\dbinom{\mu_1\ub_1}{\uc_1\tau_1^{-1}}\right] \otimes \left[\sfP\dbinom{\sigma_2\ud_2}{\ub_2\mu_2^{-1}} \otimes \sfQ\dbinom{\mu_2\ub_2}{\uc_2\tau_2^{-1}}\right] \ar[d]^-{\text{switch}}_-{\cong}\\
& \left[\sfP\dbinom{\sigma_1\ud_1}{\ub_1\mu_1^{-1}} \otimes \sfP\dbinom{\sigma_2\ud_2}{\ub_2\mu_2^{-1}}\right] \otimes \left[\sfQ\dbinom{\mu_1\ub_1}{\uc_1\tau_1^{-1}} \otimes \sfQ\dbinom{\mu_2\ub_2}{\uc_2\tau_2^{-1}}\right] \ar[d]^-{(\otimes, \otimes)}\\
& \sfP\dbinom{\sigma_1\ud_1, \sigma_2\ud_2}{\ub_1\mu_1^{-1}, \ub_2\mu_2^{-1}} \otimes \sfQ\dbinom{\mu_1\ub_1, \mu_2\ub_2}{\uc_1\tau_1^{-1}, \uc_2\tau_2^{-1}}\ar@{=}[d]\\
\colim D_Z & D_Z\left(\mu_1\ub_1, \mu_2\ub_2\right) \ar[l]_-{\eta}.
}
\]
Here $D_{X,Y} = D_X(\mu_1\ub_1) \otimes D_Y(\mu_2\ub_2)$, and the map $(\otimes, \otimes)$ has components the horizontal compositions (in the form ~\eqref{horizontalcomposition2}) in $\sfP$ and $\sfQ$, respectively.  The map $\eta$ is the natural map that comes with a colimit.

The maps $\varphi(\mu_1\ub_1,\mu_2\ub_2)$ are natural with respect to $\mu_1\ub_1$ and $\mu_2\ub_2$.  Now we fix $\mu_1\ub_1 \in \Sigma_{\ub_1}$ and let $\mu_2\ub_2$ vary through the category $\Sigma_{\ub_2}$.  Using the commutativity of $\otimes$ in $\cate$ with colimits, we obtain an induced map
\[
\varphi(\mu_1\ub_1) \colon D_X(\mu_1\ub_1) \otimes \left(\colim D_Y\right) \to \colim D_Z.
\]
Now, letting $\mu_1\ub_1$ vary through the category $\Sigma_{\ub_1}$, we obtain an induced map
\[
\varphi \colon \left(\colim D_X\right) \otimes \left(\colim D_Y\right) \to \colim D_Z.
\]
This map $\varphi$ is the required map \eqref{hcompboxv3} when applied to a typical object
\[
(\sigma_1\ud_1; \sigma_2\ud_2; \uc_1\tau_1^{-1}; \uc_2\tau_2^{-1}) \in \Sigma_{\ud_1} \times \Sigma_{\ud_2} \times \Sigma_{\uc_1}^{op} \times \Sigma_{\uc_2}^{op}.
\]

The naturality of the construction $\varphi$ with respect to maps in $\Sigma_{\ud_1} \times \Sigma_{\ud_2} \times \Sigma_{\uc_1}^{op} \times \Sigma_{\uc_2}^{op}$ is easy to check.  So we have constructed the operation \eqref{hcompboxv3}, and hence the operation \eqref{hcompboxv}.  The associativity of \eqref{hcompboxv} follows from that of \eqref{hcompboxv3}, which in turn follows from the naturality of the construction $\varphi$ and the associativity of the horizontal compositions in $\sfP$ and $\sfQ$.  Thus, we have shown that $\boxv$ (Lemma \ref{boxvmonoidal}) extends to a monoidal product on the category $\hprope$.

As before, we consider the category of monoids $\mon\left(\hprope, \boxv\right)$ in the monoidal category $\left(\hprope, \boxv\right)$.  Unwrapping the meaning of a monoid, it is straightforward to check that $\mon\left(\hprope, \boxv\right)$ is canonically isomorphic to the category $\prope$ (Proposition \ref{coloredprop}).  In this case, the interchange rule says that the monoid map $\circ \, \colon \sfP \boxv \sfP \to \sfP$ is a map of hPROPs.  This is, in fact, equivalent to the original interchange rule \eqref{interchangerule} due to its symmetry.  In summary, $\frakC$-colored PROPs can be equivalently described as:
\begin{enumerate}
\item
$\boxv$-monoidal $\boxh$-monoids, as in Proposition \ref{coloredprop}, or
\item
$\boxh$-monoidal $\boxv$-monoids.
\end{enumerate}
Symbolically, we have
\[
\begin{split}
\prope &= \mon\left(\vprope, \boxh\right) = \mon\left(\mon\left(\sigmaec, \boxv\right), \boxh\right) \\
& \cong \mon\left(\hprope, \boxv\right) = \mon\left(\mon\left(\sigmaec, \boxh\right), \boxv\right).
\end{split}
\]
In the first description, we consider a $\frakC$-colored PROP $\sfP$ as a $\frakC$-colored $\Sigma$-bimodule with a vertical composition $\circ$ (i.e., a vPROP) together with a horizontal composition $\otimes$ that is built on top of $\circ$.  In the second description, we consider $\sfP$ as a $\frakC$-colored $\Sigma$-bimodule with a horizontal composition $\otimes$ (i.e., an hPROP) together with a vertical composition $\circ$ that is built on top of $\otimes$.

\section{Colored operads and colored PROPs}
\label{app:operads}

In this section, we construct the (Quillen) adjunction between colored operads and colored PROPs (Proposition \ref{operadpropadj} and Corollary \ref{cor:operad}), generalizing what is often used in the $1$-colored chain case \cite[Example 60]{markl06}.  In one direction, it associates to a colored operad the free colored PROP generated by it.  This free functor involves a functor $\boxdot$ (Lemma \ref{boxdotassociative}) that is the main ingredient of the monoidal product $\boxh$ \eqref{boxhsigmaobject}.  In the other direction, it associates to a colored PROP its underlying colored operad.
In fact, we will show that a modified projective structure on $\prope$ can be chosen to turn this
adjunction into a strong Quillen equivalence (Proposition \ref{prop:operadQequiv}).  As a consequence, the homotopy theory of $\prope$ in its projective structure, having fewer weak equivalences than this modified projective structure, is a refinement of the projective homotopy theory for colored operads.  We will also see that, for a colored operad $\sfO$, its free colored PROP $\sfO_{prop}$ has an equivalent category of algebras (Corollary \ref{Opropalgebra}).  In other words, going from a colored operad $\sfO$ to the colored PROP $\sfO_{prop}$ does not change the algebras.

\subsection{Colored operads and colored PROPs}

We refer the reader to \cite{may72,may97} for the well-known definitions of operads, endomorphism operads, and algebras over an operad. The definitions in the colored case can be found in, e.g., \cite[Section 2]{markl04}.  The category of $\frakC$-colored operads over $\cate$ is denoted by $\operade$.

\begin{proposition}
\label{operadpropadj}
There is a pair of adjoint functors
\begin{equation}
\label{operadprop}
(-)_{prop} \colon \operade \rightleftarrows \prope \colon U
\end{equation}
between the categories of $\frakC$-colored PROPs and $\frakC$-colored operads, where the right adjoint $U$ is the forgetful functor.
\end{proposition}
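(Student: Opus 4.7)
The plan is to describe both functors explicitly and exhibit a natural hom-set bijection.

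\emph{Forgetful functor $U$.} Given $\sfP \in \prope$, the underlying $\frakC$-colored operad $U\sfP$ has components $(U\sfP)\binom{d}{\uc} := \sfP\binom{d}{\uc}$ (single-output biarities only), with right $\Sigma_{|\uc|}$-action inherited from $\sfP$. Its operadic composition combines the two PROP structures: given $p \in \sfP\binom{d}{(c_1, \ldots, c_n)}$ and $q_i \in \sfP\binom{c_i}{\ub_i}$, one first forms the horizontal product $q_1 \otimes \cdots \otimes q_n \in \sfP\binom{(c_1, \ldots, c_n)}{(\ub_1, \ldots, \ub_n)}$ by iterating \eqref{horizontalcomposition2}, then vertically composes with $p$ via \eqref{verticalcomp}. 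Associativity and equivariance follow from the bi-equivariance diagrams \eqref{horizontalequivariance}, \eqref{verticalequivariance} and the interchange rule \eqref{interchangerule}; functoriality of $U$ is immediate.

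\emph{Free functor $(-)_{prop}$.} Given $\sfO \in \operade$, define the underlying $\Sigma$-bimodule of $\sfO_{prop}$ via
\[ \sfO_{prop}\binom{\ud}{\uc} = \coprod_{(\uc_1, \ldots, \uc_m)} \sfO\binom{d_1}{\uc_1} \otimes \cdots \otimes \sfO\binom{d_m}{\uc_m}, \]
where $\ud = (d_1, \ldots, d_m)$ and the coproduct is indexed by ordered decompositions of $\uc$ into $m$ (possibly empty) subprofiles, taken with the appropriate bi-symmetric group action. Equivalently, $\sfO_{prop}$ is the free $\boxh$-monoid (hPROP, Definition \ref{hpropdef}) on the $\Sigma$-bimodule $\sfO^{\sharp}$ which agrees with $\sfO$ on single-output biarities and vanishes elsewhere. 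Horizontal composition is induced by concatenating indexing data (compatibly with associativity of $\boxdot$, Lemma \ref{boxdotassociative}); vertical composition is induced by the operadic composition of $\sfO$, applied component-wise along matching middle profiles; and the interchange rule is built into the coproduct decomposition.

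\emph{The adjunction.} Let the unit $\eta_{\sfO} \colon \sfO \to U\sfO_{prop}$ be the canonical inclusion onto the summand corresponding to the trivial partition ($m = 1$). Given an operad morphism $g \colon \sfO \to U\sfP$, I extend it to a PROP morphism $\bar g \colon \sfO_{prop} \to \sfP$ by sending a typical summand $q_1 \otimes \cdots \otimes q_m$ to the horizontal product $g(q_1) \otimes \cdots \otimes g(q_m) \in \sfP\binom{\ud}{\uc}$. Conversely, any PROP morphism $\bar g$ determines the operad morphism $U(\bar g) \circ \eta_{\sfO}$; uniqueness of extension from the single-output generators shows the two passages are mutually inverse, yielding the required natural bijection $\prope(\sfO_{prop}, \sfP) \cong \operade(\sfO, U\sfP)$.

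The main obstacle is verifying that the extension $\bar g$ is well-defined on $\sfO_{prop}$ and respects vertical composition in $\sfP$. The essential identity, proved by iterated application of the interchange rule \eqref{interchangerule}, is that every vertical composite of two horizontal products in $\sfP$ factors as a horizontal product whose individual factors are vertical composites on single-output components, i.e., operadic compositions in $U\sfP$. Pinning down this identity with care for the symmetric group coinvariants in the coproduct defining $\sfO_{prop}$ is the central technical step; everything else then follows routinely.
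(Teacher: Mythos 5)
Your construction matches the paper's own proof in all essentials: the same forgetful functor on single-output biarities with operadic composition given by horizontal-then-vertical composition as in \eqref{Upropstructuremap}, and the same free PROP given by the coproduct of $\boxdot$-products of operad components indexed by decompositions of the input profile (the paper's formula \eqref{Oprop}), with the adjunction then checked on generators. The one slip is that the subprofiles $\uc_i$ must be non-empty (each $r_i \geq 1$ in the paper's indexing), since $\frakC$-profiles here are finite \emph{non-empty} sequences and the PROPs are unitless, so there are no $0$-ary components to feed an empty block.
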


\begin{proof}
First we construct the forgetful functor $U$.  Suppose that $d, c_i, b^i_j \in \frakC$ are colors, where $1 \leq i \leq n$ and, for each $i$, $1 \leq j \leq k_i$.  Write $\uc = (c_1, \ldots , c_n)$, $\ub^i = (b^i_1, \ldots , b^i_{k_i})$, and $\ub = (\ub^1, \ldots , \ub^n)$.  If $\sfP$ is a $\frakC$-colored PROP, then the components in the $\frakC$-colored operad $U\sfP$ are
\begin{equation}
\label{Uprop}
(U\sfP)\binom{d}{c_1, \ldots , c_n} = \sfP\binom{d}{c_1, \ldots , c_n} = \sfP\binom{d}{\uc}.
\end{equation}
The structure map $\rho$ of the $\frakC$-colored operad $U\sfP$ is the composition
\begin{equation}
\label{Upropstructuremap}
\SelectTips{cm}{10}
\xymatrix{
\sfP\dbinom{d}{\uc} \otimes \sfP\dbinom{c_1}{\ub^1} \otimes \cdots \otimes \sfP\dbinom{c_n}{\ub^n} \ar[dr]^-{\rho} \ar[d]_-{Id \otimes \text{(horizontal)}} & \\
\sfP\dbinom{d}{\uc} \otimes \sfP\dbinom{\uc}{\ub} \ar[r]^-{\circ} & \sfP\dbinom{d}{\ub}.
}
\end{equation}
The associativity of the horizontal and the vertical compositions in $\sfP$ together with the interchange rule \eqref{interchangerule} imply that $\rho$ is associative.  The equivariance of $\rho$ follows from those of $\otimes$ and $\circ$.

Now we construct the unique colored PROP $\sfO_{prop}$ generated by a colored operad $\sfO$.  Let $\sfO$ be a $\frakC$-colored operad with components
\[
\sfO\binom{d}{c_1, \ldots , c_n} = \sfO\binom{d}{\uc}
\]
for $d, c_i \in \frakC$.  First we define the underlying $\frakC$-colored $\Sigma$-bimodule of $\sfO_{prop}$.  We have to specify the diagrams
\[
\sfO_{prop}\binom{[\ud]}{[\uc]} \in \cate^{\Sigma_{\ud;\uc}} = \cate^{\Sigma_{\ud} \times \Sigma_{\uc}^{op}},
\]
where $\ud = (d_1, \ldots , d_m)$ and $\uc = (c_1, \ldots , c_n)$ are $\frakC$-profiles.  To each partition $r_1 + \cdots + r_m = n$ of $n$ with each $r_i \geq 1$, we can associate to the $\frakC$-colored operad $\sfO$ the diagrams
\[
\sfO\binom{[d_i]}{[\uc_i]} \in \cate^{\Sigma_{d_i; \uc_i}} = \cate^{\Sigma_{d_i} \times \Sigma_{\uc_i}^{op}} = \cate^{\{*\} \times \Sigma_{\uc_i}^{op}}
\]
for $1 \leq i \leq m$, where
\[
\uc_i = \left(c_{r_1 + \cdots + r_{i-1} + 1}, \ldots , c_{r_1 + \cdots + r_i}\right).
\]
Recall the associative functor
\[
\boxdot = K \iota \colon \cate^{\Sigma_{\ud_1;\uc_1}} \times \cate^{\Sigma_{\ud_2;\uc_2}} \to \cate^{\Sigma_{(\ud_1,\ud_2);(\uc_1,\uc_2)}}
\]
from Lemma \ref{boxdotassociative}, where $\iota$ is an inclusion functor and $K$ is a left Kan extension.  Using the associativity of $\boxdot$, we define the object
\begin{equation}
\label{Oprop}
\sfO_{prop}\binom{[\ud]}{[\uc]} = \coprod_{r_1 + \cdots + r_m = n} \sfO\binom{[d_1]}{[\uc_1]} \boxdot \cdots \boxdot \sfO\binom{[d_m]}{[\uc_m]}
\end{equation}
in $\cate^{\Sigma_{\ud;\uc}}$, where the coproduct is taken over all the partitions $r_1 + \cdots + r_m = n$ with each $r_i \geq 1$.  This defines $\sfO_{prop}$ as an object in $\sigmaec$.

The horizontal composition in $\sfO_{prop}$ is given by concatenation of $\boxdot$ products and inclusion of summands.  Using the universal properties of left Kan extensions, the vertical composition in $\sfO_{prop}$ is uniquely determined by the operad composition in $\sfO$.  It is straightforward to check that $(-)_{prop}$ is left adjoint to the forgetful functor $U$ from $\frakC$-colored PROPs to $\frakC$-colored operads.
\end{proof}

Note that the left adjoint $(-)_{prop}$ is an embedding.  In fact, for a $\frakC$-color operad $\sfO$, it follows from the definitions of $(-)_{prop}$ and $U$ that $\sfO = U (\sfO_{prop})$.

Under the hypotheses of Theorem \ref{thm:coloredpropmodel}, the category $\operade$ of $\frakC$-colored operads in $\cate$ also has a cofibrantly generated model category structure in which the fibrations and weak equivalences are defined entrywise in $\cate$ (see \cite[Theorem 3.2]{bm03} and \cite[Theorem 2.1 and Example 1.5.7]{bm07}).

\begin{corollary}
\label{cor:operad}
Under the hypotheses of Theorem \ref{thm:coloredpropmodel}, the adjoint pair $((-)_{prop},U)$ in Proposition \ref{operadpropadj} is a Quillen pair.
\end{corollary}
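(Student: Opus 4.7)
The plan is to verify that the right adjoint $U$ preserves both fibrations and acyclic fibrations, which is equivalent to $((-)_{prop}, U)$ being a Quillen pair. Because both model structures in question are projective (entrywise) lifts from $\cate$, this should reduce almost immediately to a bookkeeping observation about which components each category sees.

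More precisely, I would first recall the setup: by Theorem \ref{props'}, a map $f \colon \sfP \to \sfQ$ in $\prope$ is a fibration (respectively, weak equivalence) if and only if every component map $f \colon \sfP\binom{\ud}{\uc} \to \sfQ\binom{\ud}{\uc}$ is a fibration (respectively, weak equivalence) in $\cate$, as $\uc$ and $\ud$ range over all $\frakC$-profiles. By the cited result of Berger--Moerdijk, a map $g \colon \sfO \to \sfO'$ of $\frakC$-colored operads is a fibration (respectively, weak equivalence) if and only if every component map $g \colon \sfO\binom{d}{c_1, \ldots, c_n} \to \sfO'\binom{d}{c_1, \ldots, c_n}$ is a fibration (respectively, weak equivalence) in $\cate$. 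In both cases these are exactly the same notion applied entrywise in $\cate$.

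The key observation, recorded in \eqref{Uprop}, is that $(U\sfP)\binom{d}{c_1, \ldots, c_n} = \sfP\binom{d}{c_1, \ldots, c_n}$. Hence if $f \colon \sfP \to \sfQ$ is a fibration (respectively, acyclic fibration) in $\prope$, then in particular its components with single-element output profiles $\ud = (d)$ are fibrations (respectively, acyclic fibrations) in $\cate$, which is precisely what is required for $Uf$ to be a fibration (respectively, acyclic fibration) in $\operade$. Thus $U$ is a right Quillen functor.

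I do not anticipate a real obstacle here: the argument is essentially a definitional unwinding enabled by the fact that both model structures are projective lifts sharing the same underlying notions of fibration and weak equivalence in $\cate$, and $U$ simply restricts attention to a sub-collection of components. The slightly subtle points worth remarking on are (i) that Proposition \ref{operadpropadj} has already established the adjunction at the categorical level, and (ii) that the hypotheses of Theorem \ref{thm:coloredpropmodel} are exactly those invoked in \cite{bm07} to obtain the projective model structure on $\operade$, so both lifted model structures exist under a common set of assumptions and the entrywise comparison is legitimate.
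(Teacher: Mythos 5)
Your argument is correct and is exactly the paper's proof, merely spelled out in more detail: the paper's one-line justification is that by the definition \eqref{Uprop} the components of $U\sfP$ are a sub-collection of the components of $\sfP$, so $U$ preserves the entrywise-defined (acyclic) fibrations and is therefore a right Quillen functor.
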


\begin{proof}
From the definition \eqref{Uprop}, $U$ preserves (acyclic) fibrations.
\end{proof}

\begin{proposition}
\label{prop:operadQequiv}
Under the hypotheses of Theorem \ref{thm:coloredpropmodel}, the adjoint pair $((-)_{prop},U)$ in Proposition \ref{operadpropadj} is a Quillen equivalence, if $\prope$ is given the modified projective
structure (as discussed in Remark \ref{rmk:subset}) determined by components with a single object in the target.
\end{proposition}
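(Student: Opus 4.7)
The plan is to apply Hovey's criterion \cite[1.3.16]{hovey} for a strong Quillen pair to be a Quillen equivalence, in the spirit of the proof of Corollary \ref{colorQequivs}. The approach exploits two special features of this setup that together make the argument essentially formal once the modified projective structure is chosen carefully.

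First, I would verify that $((-)_{prop},U)$ remains a strong Quillen pair after enlarging the class of weak equivalences (and fibrations) on $\prope$ to the modified projective structure. This is immediate from \eqref{Uprop}: since $U$ only records the components $\sfP\binom{d}{\uc}$ (which are exactly the orbits with $|\ud|=1$), and the modified structure defines fibrations and acyclic fibrations entrywise on precisely these orbits, $U$ still preserves both classes. More strongly, the same observation shows that $U$ \emph{creates} (and reflects) weak equivalences and fibrations in the modified structure: a morphism $\phi$ in $\prope$ is a modified-structure weak equivalence iff $U\phi$ is a weak equivalence of colored operads in the sense of \cite{bm07}.

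Second, I would observe that the unit $\eta_\sfO \colon \sfO \to U(\sfO_{prop})$ is the identity. Indeed, when $|\ud|=1$ (so $m=1$ and $r_1=n$), the coproduct in \eqref{Oprop} has a single summand equal to $\sfO\binom{[d]}{[\uc]}$, and by construction the vertical composition in $\sfO_{prop}$ restricted to single-output entries is induced from the operad composition of $\sfO$, so reapplying \eqref{Upropstructuremap} to $\sfO_{prop}$ recovers $\rho_\sfO$. This remark already appears in passing after Proposition \ref{operadpropadj}.

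Combining these, the Quillen equivalence criterion is straightforward: for a cofibrant $\sfO\in\operade$ and a fibrant $\sfP\in\prope$, any map $\psi\colon \sfO_{prop}\to \sfP$ has adjoint $\hat\psi = U\psi\circ\eta_\sfO = U\psi$. Since $U$ reflects weak equivalences, $\psi$ is a weak equivalence iff $U\psi=\hat\psi$ is, which verifies Hovey's criterion. I do not anticipate a real obstacle here; the only point requiring care is confirming that this modified projective structure on $\prope$ genuinely exists as a (strongly) cofibrantly generated model structure, but this is guaranteed by Remark \ref{rmk:subset} in combination with Corollary \ref{modproj} at the level of $\sigmaec$.
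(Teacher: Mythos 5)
Your proposal is correct and takes essentially the same approach as the paper: both arguments rest on the two observations that the unit $\sfO \to U(\sfO_{prop})$ is an isomorphism and that $U$ creates weak equivalences once $\prope$ carries the modified projective structure on the single-output components, and then invoke \cite[1.3.16]{hovey}. The only (immaterial) difference is that you verify the adjoint-pair characterization of a Quillen equivalence directly, whereas the paper routes through the derived unit $X \to U(Q(X_{prop}))$.
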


\begin{proof}
	This proof is quite similar to that for Corollary \ref{colorQequivs}.  Again, the choice of modified projective
	structure implies the right adjoint creates weak equivalences, with the unit of the adjunction
	an isomorphism.  As before, this implies the right adjoint preserves the weak equivalence
	built into the unit of the derived adjunction, so \cite[1.3.16]{hovey} suffices.
\end{proof}

We now observe that passing from a colored operad $\sfO$ to the colored PROP $\sfO_{prop}$ does not alter the category of algebras.

\begin{corollary}
\label{Opropalgebra}
Let $\sfO$ be a $\frakC$-colored operad.  Then there are functors
\[
\Phi \colon \alg(\sfO) \rightleftarrows \alg(\sfO_{prop}) \colon \Psi
\]
that give an equivalence between the categories $\alg(\sfO)$ of $\sfO$-algebras and $\alg(\sfO_{prop})$ of $\sfO_{prop}$-algebras.
\end{corollary}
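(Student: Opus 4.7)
My plan is to obtain the equivalence directly from the free-forgetful adjunction $((-)_{prop}, U)$ of Proposition \ref{operadpropadj}, combined with the observation that the endomorphism PROP, after passing through $U$, yields the colored endomorphism operad. Concretely, by Definition \ref{def:endoPROP} and formula \eqref{Uprop}, the components $(UE_X)\binom{d}{\uc} = X_d^{X_{\uc}}$ together with the vertical composition on single-output components give exactly the colored endomorphism operad of $X$. Thus an $\sfO$-algebra structure on a $\frakC$-graded object $X$ is a map of operads $\sfO \to UE_X$, while an $\sfO_{prop}$-algebra structure on the same underlying $X$ is a map of PROPs $\sfO_{prop} \to E_X$.

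On objects, I would define $\Phi$ and $\Psi$ via the adjunction bijection $\Hom_{\prope}(\sfO_{prop}, E_X) \cong \Hom_{\operade}(\sfO, UE_X)$, which is natural in $X$ (viewed as a functorial construction from $\cate^{\frakC}$ into either category). Using the identity $\sfO = U(\sfO_{prop})$ observed immediately after Proposition \ref{operadpropadj}, this bijection is realized transparently by $\Phi \colon g \mapsto Ug$, with inverse $\Psi$ picking out the unique PROP extension supplied by freeness of $\sfO_{prop}$. Both $\Phi$ and $\Psi$ are the identity on underlying $\frakC$-graded objects.

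For compatibility with morphisms, I would adapt the relative endomorphism machinery of Section \ref{homotopyinvariance}. Since $U$ is a right adjoint, it preserves the pullback defining $E_f$ in \eqref{eq:pullbacksq}, so $UE_f = UE_X \times_{UE_{X,Y}} UE_Y$ in $\operade$. A componentwise argument --- the operad analogue of Lemma \ref{splitsquare}, proved by the same diagram chase --- then shows that a map $f \colon X \to Y$ in $\cate^{\frakC}$ is a morphism of $\sfO$-algebras precisely when the two structure maps descend from a single map $\sfO \to UE_f$. By naturality of the adjunction bijection applied to the triple of targets $E_X$, $E_Y$, and $E_{X,Y}$, this descent condition matches the corresponding $\sfO_{prop}$-algebra morphism condition under $\Phi$ and $\Psi$, so both functors act on morphisms and are mutually inverse.

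The part requiring the most care is stating and verifying the operad analogue of Lemma \ref{splitsquare}, together with checking that the adjunction bijection is natural in each of the three targets $E_X$, $E_Y$, and $E_{X,Y}$ simultaneously. No new technical idea beyond Proposition \ref{operadpropadj}, the preservation of limits by the right adjoint $U$, and the componentwise nature of the endomorphism constructions enters the argument; the remainder is formal chasing of diagrams through the adjunction.
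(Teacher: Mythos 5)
Your argument is correct and takes essentially the same route as the paper: both proofs rest on the adjunction bijection $\Hom_{\prope}(\sfO_{prop},E_X)\cong\Hom_{\operade}(\sfO,U E_X)$ together with the observation that $U E_X$ is the $\frakC$-colored endomorphism operad of $X$, the paper merely describing the extension functor more explicitly via the formula \eqref{Oprop} and leaving the morphism-level check to the reader (which you fill in, reasonably, through the relative endomorphism construction). The only discrepancy is notational: your $\Phi$ and $\Psi$ are swapped relative to the statement, where $\Phi$ is declared to go from $\alg(\sfO)$ to $\alg(\sfO_{prop})$.
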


\begin{proof}
First observe that in each of the two categories, an algebra has an underlying $\frakC$-graded object $\{A_c\}$.  Given an $\sfO$-algebra $A$, the formula \eqref{Oprop} for $\sfO_{prop}$ together with the universal properties of left Kan extensions extend $A$ to an $\sfO_{prop}$-algebra.  This is the functor $\Phi$.

Conversely, an $\sfO_{prop}$-algebra is a map $\lambda \colon \sfO_{prop} \to E_X$ of $\frakC$-colored PROPs, where $E_X$ is the $\frakC$-colored endomorphism PROP of a $\frakC$-graded object $X = \{X_c\}$.  Using the free-forgetful adjunction from Proposition \ref{operadpropadj}, this $\sfO_{prop}$-algebra is equivalent to a map $\lambda' \colon \sfO \to U(E_X)$ of $\frakC$-colored operads.  From the definition (\eqref{Uprop} and \eqref{Upropstructuremap}) of the forgetful functor $U$, one observes that $U(E_X)$ is the $\frakC$-colored endomorphism operad of $X$.  Therefore, the map $\lambda'$ is actually giving an $\sfO$-algebra structure on $X$.  This is the functor $\Psi$.  One can check that the functor $\Phi$ and $\Psi$ give an equivalence of categories.
\end{proof}


\end{document}